\title[On stationary real matrix Schubert varieties]{On stationary real matrix Schubert varieties}
\author[J. Lee]{Jaehoon Lee}
\address[]{Jaehoon Lee, School of Mathematics, Korea Institute for Advanced Study, 85 Hoegiro, Dongdaemun-gu, Seoul 02455, Republic of Korea}
\email{jaehoonlee@kias.re.kr}
\author[S. Park]{Sangwoo Park}
\address[]{Sangwoo Park, Department of Mathematical Sciences, Pusan National University, Busan 46241, Republic of Korea}
\email{s.w.park@pusan.ac.kr}
\author[E. Yeon]{Eungbeom Yeon}
\address[]{Eungbeom Yeon, Department of Mathematical Sciences, Pusan National University, Busan 46241, Republic of Korea}
\email{ebeom.yeon@pusan.ac.kr}
\begin{document}

\newtheorem{theorem}{theorem}[section]
\newtheorem{thm}[theorem]{Theorem}
\newtheorem{lemma}[theorem]{Lemma}
\newtheorem{cor}[theorem]{Corollary}
\newtheorem{prop}[theorem]{Proposition}
\newtheorem{rmk}[theorem]{Remark}
\newtheorem{Ex}[theorem]{Example}
\newtheorem{Def}[theorem]{Definition}
\newtheorem{Question}[theorem]{Question}
\newtheorem*{conj}{Conjecture}
\newtheorem*{mainthm1}{Theorem 1}
\newtheorem*{mainthm2}{Theorem 2}
\newtheorem*{mainthm3}{Theorem 3}

\renewcommand{\theequation}{\thesection.\arabic{equation}}
\newcommand{\RNum}[1]{\uppercase\expandafter{\romannumeral #1\relax}}
\newcommand{\R}{\mathbb{R}}
\newcommand{\C}{\mathbb{C}}
\newcommand{\grad}{\nabla}
\newcommand{\laplacian}{\Delta}
\newcommand{\tgamma}{\tilde{\gamma}}
\newcommand{\ttau}{\tilde{\tau}}
\newcommand{\pp}{\Phi}
\newcommand{\tkappa}{\tilde{\kappa}}
\renewcommand{\d}{\textup{d}}
\newlength{\mywidth}
\newcommand\bigfrown[2][\textstyle]{\ensuremath{%
  \array[b]{c}\text{\resizebox{\mywidth}{.7ex}{$#1\frown$}}\\[-1.3ex]#1#2\endarray}}
\newcommand{\arc}[1]{{%
  \setbox9=\hbox{#1}%
  \ooalign{\resizebox{\wd9}{\height}{\texttoptiebar{\phantom{A}}}\cr#1}}}
\newcommand\buildcirclepm[1]{%

  \begin{tikzpicture}[baseline=(X.base), inner sep=-0.1, outer sep=-2]

    \node[draw,circle] (X)  {\footnotesize\raisebox{1ex}{$#1\pm$}};

  \end{tikzpicture}%

}

\subjclass[2020]{53A10}
\keywords{matrix Schubert variety, minimal submanifold, vexillary permutation}

\begin{abstract}
In this paper, we study when a real matrix Schubert variety is stationary with respect to the first variation. We first show that a necessary condition for its open dense regular part to be a minimal submanifold is that the corresponding partial permutation is vexillary. Among vexillary partial permutations, we establish minimality by a geometric argument when the Rothe diagram is of Grassmannian type and has at most two connected components. We further obtain, as a corollary, the minimality of those varieties that decompose as products of this type. These varieties include all determinantal varieties as well as some new minimal cones.
\end{abstract}

\maketitle

\section{Introduction}\label{intro}
\setcounter{equation}{0}
Many known area-minimizing cones are realized as real algebraic varieties. One of the most significant examples is the Simons cone, which is given by the zero set of the polynomial 
\begin{align*}
x_1^2+x_2^2+x_3^2+x_4^2-x_5^2-x_6^2-x_7^2-x_8^2.
\end{align*} 
Its explicit representation enabled various geometric observations and insights, and consequently it has played a central role in the regularity theory of minimal hypersurfaces. On the other hand, Hsiang \cite{Hsiang} observed that every homogeneous minimal submanifold of the round sphere must be a real algebraic variety. Here, a minimal submanifold is said to be homogeneous if it is an orbit of a subgroup of the isometry group of the ambient Riemannian manifold. 

These results suggest that real algebraic varieties form a promising class, providing not only area-minimizing cones but also various minimal submanifolds that can be studied explicitly. However, algebraic varieties may contain singularities, and it is common that the numbers of generators of their defining ideals are greater than their codimensions, making it difficult to handle Riemannian geometric quantities. Therefore a natural direction is to extend observations from examples that are already well understood.

Recently, Bordemann, Choe and Hoppe \cite{BCH}, and independently, Kozhasov \cite{Koz} established that all determinantal varieties give rise to minimal submanifolds. Given $m$, $n$, and $0<r<\min\{m,n\}$, the set of all real $m\times n$ matrices of rank at most $r$,
\begin{align*}
\overline{C}(m,n,r):=\left\{A\in\mathfrak{M}_{m,n}(\mathbb{R})\mid \text{rk}(A)\leq r\right\},
\end{align*}
is called the determinantal variety. Here, $\mathfrak{M}_{m,n}(\mathbb{R})$ denotes the space of all real $m\times n$ matrices equipped with the inner product $\left\langle A, B\right\rangle :=\text{tr}\left(A^TB\right)$. This is a real algebraic variety defined by the vanishing of all $(r+1)\times (r+1)$ minors. It contains an open dense regular subset $C(m,n,r)$ consisting of matrices of rank exactly $r$. In \cite{BCH} and \cite{Koz}, several different methods were applied to prove that every $C(m,n,r)$ is a minimal submanifold in $\mathfrak{M}_{m,n}(\mathbb{R})$.

In \cite{KerLaw}, Kerckhove and Lawlor studied the area-minimizing property of $\overline{C}(m,n,r)$ using the directed slicing method and proved that it is area-minimizing whenever $m+n-2r\geq 4$. Similarly, the minimality of all Pfaffian varieties was also proved in \cite{Koz}, and further area-minimizing cones were identified by Cui, Jiao and Xu \cite{CJX}.

In this paper, we study the minimality of real matrix Schubert varieties introduced in \cite{Ful}. For a partial permutation $\omega\in\mathfrak{M}_{m,n}(\mathbb{R})$ (see Definition \ref{Def21}), the real matrix Schubert variety $\overline{X}_{\omega}$ is defined by
\begin{align*}
\overline{X}_{\omega}:=\left\{A\in\mathfrak{M}_{m,n}(\mathbb{R})\mid \text{rk}\left(A_{[p,q]}\right)\leq\text{rk}\left(\omega_{[p,q]}\right)\text{ for all }1\leq p\leq m, 1\leq q\leq n\right\},
\end{align*}
where $A_{[p,q]}$ denotes the upper-left $p\times q$ submatrix of $A$. As mentioned in Example \ref{ex27}, these varieties naturally generalize determinantal varieties. The real matrix Schubert variety $\overline{X}_\omega$ also contains an open dense regular subset $X_\omega$ given by
\begin{align*}
X_{\omega}:=\left\{A\in\mathfrak{M}_{m,n}(\mathbb{R})\mid \text{rk}\left(A_{[p,q]}\right)=\text{rk}\left(\omega_{[p,q]}\right)\text{ for all }1\leq p\leq m, 1\leq q\leq n\right\}.
\end{align*}
For further details on matrix Schubert varieties, see Section \ref{schubert} or \cite{Ful}.

We first establish the following necessary condition for the open dense regular part $X_\omega$ to be a minimal submanifold in $\mathfrak{M}_{m,n}(\mathbb{R})$.
\begin{mainthm1}[Theorem \ref{Thm41}]
If $\omega$ is a non-vexillary partial permutation, then $X_{\omega}\subset\mathfrak{M}_{m,n}(\mathbb{R})$ is not minimal.
\end{mainthm1}
Those permutations that avoid the $2143$-pattern are called \emph{vexillary}. The precise definition for vexillary partial permutations is given in Definition \ref{Def25}, and an equivalent characterization appears in Lemma \ref{vex}. The idea of the proof is as follows. We first construct a normal frame for $X_\omega$ in Proposition \ref{Prop26}. If $\omega$ is not vexillary, we then use this pattern to construct the point $\widehat{\omega}^{(\alpha,\beta)}(y_1,y_2,y_3)$ (see (\ref{omegappp})), which lies in $X_\omega$. Using the normal frame, we show that the mean curvature vector does not vanish at the point $\widehat{\omega}^{(\alpha,\beta)}(y_1,y_2,y_3)$. See Section \ref{nonminimal} for more details.

Vexillary permutations in the symmetric group were first identified by Lascoux and Sch\"{u}tzenberger \cite{LS} through pattern avoidance. They admit several equivalent characterizations, which connect various concepts in algebraic combinatorics and algebraic geometry. Therefore vexillary permutations in the symmetric group have played a central role in these areas, in part due to their characterizations serving as starting points for introducing the notion of vexillary elements in other settings. Billey and Lam \cite{BL} extended the notion to other Lie types via Stanley symmetric functions. Fulton \cite{Ful} established a geometric interpretation through the study of degeneracy loci, and this viewpoint was further developed by Anderson and Fulton, extending it to vexillary signed permutations (see \cite{AF} and references therein).

The above theorem suggests that vexillary permutations, which have been studied mainly from combinatorial and algebraic perspectives, may also have analytic implications through the mean-curvature geometry of Schubert varieties. We expect that the vexillary condition should also be sufficient for minimality.
\begin{conj}
If $\omega$ is a vexillary partial permutation, then $X_\omega\subset\mathfrak{M}_{m,n}(\mathbb{R})$ is minimal.
\end{conj}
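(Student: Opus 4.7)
The plan is to establish minimality of $X_\omega$ for a general vexillary $\omega$ by combining the antichain structure of its essential set with the normal frame of Proposition \ref{Prop26} and an equivariance argument that reduces the problem to the Grassmannian configurations already handled in the paper.

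First I would exploit the characterization that $\omega$ is vexillary if and only if its essential set $\text{Ess}(\omega)$ forms an antichain in $[m] \times [n]$, so that the essential boxes can be enumerated as $(p_1, q_1), \ldots, (p_s, q_s)$ with $p_1 < \cdots < p_s$ and $q_1 > \cdots > q_s$, with associated rank bounds $r_i := \text{rk}\left(\omega_{[p_i,q_i]}\right)$. Then $X_\omega = \{A \mid \text{rk}\left(A_{[p_i,q_i]}\right) = r_i \text{ for all } i\}$, and the antichain structure encodes a compatibility between the rank conditions on the nested subblocks. Around a generic point $A \in X_\omega$, I would construct a parametrization by choosing orthonormal bases of the row and column spaces adapted to the flags $\{p_1 < \cdots < p_s\}$ and $\{q_s < \cdots < q_1\}$, producing a flag-adapted singular value decomposition that generalizes the parametrization of the determinantal variety used by \cite{BCH, Koz}.

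The next step is to compute the mean curvature vector $H(A)$ at a generic $A \in X_\omega$ using the normal frame from Proposition \ref{Prop26}. Since the normal bundle of $X_\omega$ is generated by gradients of the $(r_i+1) \times (r_i+1)$ minor equations at each essential box $(p_i, q_i)$, the vector $H(A)$ decomposes as a sum of local contributions indexed by the essential boxes, plus cross-interaction terms. Each local contribution should vanish by the Bordemann-Choe-Hoppe and Kozhasov computation applied to the corresponding subblock; the cross terms are handled by equivariance, via the subgroup $G_\omega \subset O(m) \times O(n)$ of block-diagonal orthogonal matrices preserving the row and column flags. Since $G_\omega$ acts isometrically on $X_\omega$, the vector $H(A)$ must lie in the $G_\omega$-fixed subspace of the normal space $N_A X_\omega$, and provided this fixed subspace is trivial at a generic $A$, one concludes $H(A) = 0$ at generic points, and hence $H \equiv 0$ on $X_\omega$ by continuity.

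The main obstacle will be verifying triviality of the $G_\omega$-fixed normal subspace for arbitrary vexillary configurations. When the essential boxes are tightly stacked---so that the row and column blocks defined by the flag are small---the block-diagonal symmetry group becomes small, and coupling between distinct essential boxes can produce $G_\omega$-invariant normal components that the block-diagonal symmetry fails to eliminate. A promising remedy is induction on the number $s$ of essential boxes, realizing $X_\omega$ as the total space of a fibration over a determinantal variety corresponding to one essential box with fiber a vexillary Schubert variety indexed by the remaining essential boxes. If this fibration is a Riemannian submersion with minimal fibers (by the inductive hypothesis) over a minimal base (by \cite{BCH, Koz}), and if the O'Neill integrability contributions to the mean curvature vanish---a fact that should follow from the antichain structure of $\text{Ess}(\omega)$---then classical minimal submersion theory yields minimality of the total space, with the Grassmannian-with-at-most-two-components cases and their products established in the paper providing the base of the induction.
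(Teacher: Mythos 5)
The statement you are addressing is stated in the paper only as a conjecture and is left open there: the authors verify it for determinantal varieties, for the class $\widetilde{\mathfrak{Gr}}_2$ (Rothe diagrams of Grassmannian shape with at most two connected components), and for products of such pieces, and explicitly say the general case "remains widely open." Your proposal does not close this gap; its decisive steps are exactly the ones you defer. The central problem is the equivariance step. For the mean curvature vector $H(A)$ to be constrained to the fixed subspace of a group of ambient isometries, those isometries must both preserve $\overline{X}_{\omega}$ and fix the point $A$; a generic $A\in X_{\omega}$ is fixed by no nontrivial block-diagonal element of $O(m)\times O(n)$ acting by $A\mapsto UAV^{T}$, so the group $G_{\omega}$ as you describe it imposes no condition at $A$. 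What the paper actually does in the two-block case is construct \emph{point-adapted} involutions $\Phi_Q$ and $\Psi_Q$ (reflections through $C(Q)$ and through the row space of $Q_{[m,r_1+n_1]}$), and the nontrivial content of Lemmas \ref{lem55} and \ref{lem57} is the case-by-case verification that these particular reflections preserve every rank inequality defining $\overline{X}_{\omega}$. With three or more interacting essential boxes one needs reflections adapted to several intermediate flags simultaneously, and a reflection adapted to one box generically violates the rank condition at another; this is precisely the obstruction you name ("triviality of the $G_{\omega}$-fixed normal subspace") and then leave unresolved. Similarly, your decomposition of $H(A)$ into "local contributions plus cross terms" is not automatic: by Lemma \ref{minimalcheck} one must trace $\mathrm{Hess}\,f_{(\alpha,\beta)}$ over the normal space, which involves the inverse Gram matrix of the full frame of Proposition \ref{Prop26} and hence couples all boxes; Section \ref{nonminimal} shows concretely that such cross terms can be nonzero, since that is exactly how non-minimality is detected in the non-vexillary case.

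The fallback induction is also not well posed. The projection $A\mapsto A_{[p_i,q_i]}$ restricted to $X_{\omega}$ is not a Riemannian submersion for the induced flat metrics, and even if it were, O'Neill's formulas govern the intrinsic geometry of the total space, not its mean curvature as a submanifold of $\mathfrak{M}_{m,n}(\mathbb{R})$; extrinsic minimality is not inherited from minimality of base and fiber. In short, your plan reproduces the special cases the paper already handles and packages the genuinely hard step --- controlling the interaction of three or more essential boxes --- as an unproved assumption, so it does not constitute a proof of the conjecture.
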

Since determinantal varieties correspond to the vexillary case, the conjecture is verified in this special case, while it remains widely open for other cases. We confirm this conjecture for a certain subclass $\widetilde{\mathfrak{Gr}}_2$, which consists of partial permutations whose Rothe diagrams have the same shape as those of Grassmannian permutations and have at most two connected components. Here, a permutation is called Grassmannian if it has just one descent.
\begin{mainthm2}[Theorem \ref{thm52}]
If $\omega\in\widetilde{\mathfrak{Gr}}_2$, then $X_{\omega}\subset\mathfrak{M}_{m,n}(\mathbb{R})$ is minimal.
\end{mainthm2}
See Section \ref{grass} for the precise description of $\widetilde{\mathfrak{Gr}}_2$. The case in which the Rothe diagram has exactly two connected components yields new examples distinct from determinantal varieties. 

For the proof, we modify the geometric argument given in \cite{BCH}, where the authors proved the minimality of determinantal varieties by showing that they admit helicoidal symmetries. In our case the varieties are not helicoidal in general. Nevertheless, by considering analogous involutive isometries, we show that certain components of the mean curvature vector must vanish under the action of these isometries. By iterating this procedure, we eventually obtain minimality.

If the Rothe diagram of a vexillary partial permutation contains the top-left corner $(1,1)$, then the corresponding matrix Schubert variety decomposes as a product of smaller matrix Schubert varieties together with a Euclidean factor. As a corollary, we also deduce minimality in the following case.
\begin{mainthm3}[Corollary \ref{corcorcorcor}]
Let $\omega\in\mathfrak{M}_{m,n}(\mathbb{R})$ be a vexillary partial permutation whose Rothe diagram contains the top-left corner $(1,1)$. Then $\overline{X}_{\omega}$ is congruent to $\overline{X}_{\omega_1}\times\cdots\times\overline{X}_{\omega_k}\times\mathbb{R}^N$ for some $N\geq 0$ and vexillary partial permutations $\omega_1,\cdots,\omega_k$. If all $\omega_j$ belong to $\widetilde{\mathfrak{Gr}}_2$, then $X_{\omega}\subset\mathfrak{M}_{m,n}(\mathbb{R})$ is minimal.
\end{mainthm3}

The paper is organized as follows. In Section \ref{schubert}, we review the necessary background on real matrix Schubert varieties, including the notions of vexillary partial permutations and the Rothe diagram. Then we construct a normal frame for their open dense regular parts. Section \ref{lemmaminor} provides computational lemmas on minors that are used in the proof. In Section \ref{nonminimal}, we prove the non-minimality of matrix Schubert varieties corresponding to non-vexillary partial permutations, which gives a necessary condition for minimality. Section \ref{grass} defines the subclass $\widetilde{\mathfrak{Gr}}_2$ of vexillary partial permutations and establishes the minimality through a geometric argument. We also discuss the case when the Rothe diagram of a vexillary partial permutation contains the top-left corner, in which the variety decomposes as a product of smaller matrix Schubert varieties together with a Euclidean factor.

\section*{Acknowledgements}
JL was supported by a KIAS Individual Grant MG086402 at Korea Institute for Advanced Study. SP was supported by the National Research Foundation of Korea (NRF) grant funded by the Korea government (MSIT) (No. RS-2023-00245494). EY was supported in part by National Research Foundation of Korea NRF-2022R1C1C2013384.

\section{Real matrix Schubert varieties}\label{schubert}
\setcounter{equation}{0}
This section provides a brief review of partial permutations and the associated real matrix Schubert varieties. See \cite{Ful, combcommalg} for further details. We then consider the geometric properties of real matrix Schubert varieties with particular emphasis on the mean curvature vector. 

\subsection{Partial permutations and the Rothe diagram}\label{ppermu}
Let $\mathfrak{M}_{m,n}(\mathbb{R})$ be the $\mathbb{R}$-vector space of all real $m \times n$ matrices, equipped with the inner product $\left\langle A, B\right\rangle=\text{tr}\left(A^T B\right)$. This space is isometric to the Euclidean space $\mathbb{R}^{mn}$. 

\begin{Def}\label{Def21}\normalfont
A matrix $\omega\in\mathfrak{M}_{m,n}(\mathbb{R})$ is called a \emph{partial permutation} if each row and each column contains at most one entry equal to $1$, with all other entries equal to $0$. If, in addition, $m=n$ and each row and each column contains exactly one entry equal to $1$, it is called a \emph{permutation}.
\end{Def}

Let $\mathfrak{S}_n$ denote the symmetric group consisting of all bijections of the set $[1,n]:=\{1,2,\cdots,n\}$. Then any element $\sigma\in\mathfrak{S}_n$, which is commonly called a permutation, can be identified with a permutation matrix via the correspondence
\begin{align}\label{corres}
\sigma\in\mathfrak{S}_n \leftrightarrow
\begin{pmatrix}
e_{\sigma(1)}\\\vdots\\e_{\sigma(n)}
\end{pmatrix}\in\mathfrak{M}_{n,n}(\mathbb{R}).
\end{align}
Here $\left\{e_i\right\}_{i=1}^n$ is the standard basis of $\mathbb{R}^n$ and each $e_i$ is represented as a row vector with a $1$ in the $i$-th column and $0$ elsewhere. When there is no risk of confusion, we use the same notation for a permutation and the corresponding permutation matrix. 

Following \cite{Ful}, a partial permutation $\omega\in\mathfrak{M}_{m,n}(\mathbb{R})$ can be extended to a permutation matrix by adding appropriate rows and columns. We briefly recall this construction below. Define a sequence $\{a_i\}_{i=1}^m$ by
\begin{align*}
a_i=
\begin{cases}
j &\text{if } \omega_{ij}=1 \text{ for some } j, \\
\infty &\text{otherwise}.
\end{cases}
\end{align*}
Then a permutation $\tilde{\omega}\in\mathfrak{S}_{m+n}$ is defined inductively by setting
\begin{align*}
\tilde{\omega}(i)=
\begin{cases}
a_i &\text{if }i\leq m \text{ and }a_i\leq n,\\
\min\left\{[n+1,m+n]\setminus\left\{\tilde{\omega}(1),\cdots,\tilde{\omega}(i-1)\right\}\right\} &\text{if }i\leq m \text{ and }a_i=\infty,\\
\min\left\{[1,m+n]\setminus\left\{\tilde{\omega}(1),\cdots,\tilde{\omega}(i-1)\right\}\right\} &\text{if }i>m.
\end{cases}
\end{align*}
This permutation satisfies
\begin{align}\label{extproperty}
\tilde{\omega}(i)<\tilde{\omega}(i+1)\quad\text{if}\quad i>m\quad\text{and}\quad\tilde{\omega}^{-1}(j)<\tilde{\omega}^{-1}(j+1)\quad\text{if}\quad j>n.
\end{align}
The corresponding permutation matrix $\tilde{\omega}\in\mathfrak{M}_{m+n,m+n}(\mathbb{R})$ contains $\omega$ as its upper-left $m\times n$ submatrix. Therefore $\tilde{\omega}$ can be considered as an extension of $\omega$. Throughout this paper, the \emph{extension} of $\omega$ to a permutation will refer to $\tilde{\omega}$.

\begin{Def}\normalfont
The \emph{Rothe diagram} $\mathcal{D}(\tilde{\omega})$ of the extension $\tilde{\omega}$ is defined by
\begin{align*}
\mathcal{D}(\tilde{\omega}):=\left\{(i,j)\in [1,m+n]\times [1,m+n]\mid\tilde{\omega}(i)>j\text{ and }\tilde{\omega}^{-1}(j)>i\right\}. 
\end{align*}
\end{Def}
It follows from (\ref{extproperty}) that $\mathcal{D}(\tilde{\omega})\subseteq [1,m]\times [1,n]$. Indeed, if $(i,j)\in\mathcal{D}(\tilde{\omega})$ and $i>m$, then (\ref{extproperty}) implies that
\begin{align*}
\tilde{\omega}(i)<\tilde{\omega}\left(\tilde{\omega}^{-1}(j)\right)=j.
\end{align*}
This yields a contradiction since we must have $\tilde{\omega}(i)>j$. Therefore $i\leq m$, and a similar argument shows that $j\leq n$ as well. 

Moreover, one can observe that the pairs in $\mathcal{D}(\tilde{\omega})$ correspond to the positions that remain after removing each $1$ of $\omega$ together with all entries to its right and below. Based on these observations, the Rothe diagram of a partial permutation is defined as follows.
\begin{Def}\normalfont
The \emph{Rothe diagram} $\mathcal{D}(\omega)$ of a partial permutation $\omega\in\mathfrak{M}_{m,n}(\mathbb{R})$ is defined by $\mathcal{D}(\omega):=\mathcal{D}(\tilde{\omega})$. 
\end{Def}
A \emph{connected component} of $\mathcal{D}(\omega)$ refers to a subset that is connected via adjacent rows and columns. 
\begin{Ex}\label{EX1}\normalfont
For the partial permutation $\omega\in\mathfrak{M}_{3,3}(\mathbb{R})$ given by
\begin{align*}
\omega=
\begin{pmatrix}
0 & 1 & 0\\
0 & 0 & 0\\
1 & 0 & 0
\end{pmatrix},
\end{align*}
the extension $\tilde{\omega}\in\mathfrak{M}_{6,6}(\mathbb{R})$ is
\begin{align*}
\tilde{\omega}=
\begin{pmatrix}
0 & 1 & 0 & 0 & 0 & 0\\
0 & 0 & 0 & 1 & 0 & 0\\
1 & 0 & 0 & 0 & 0 & 0\\
0 & 0 & 1 & 0 & 0 & 0\\
0 & 0 & 0 & 0 & 1 & 0\\
0 & 0 & 0 & 0 & 0 & 1
\end{pmatrix}.
\end{align*}
The Rothe diagram is $\mathcal{D}(\omega)=\{(1,1), (2,1), (2,3)\}$, which has two connected components $\{(1,1),(2,1)\}$ and $\{(2,3)\}$. These positions are precisely those not to the right or below any $1$ of $\omega$.
\end{Ex}
A certain type of partial permutations plays a significant role in the variational approach to determining when the first variation vanishes. 
\begin{Def}\label{Def25}\normalfont
A partial permutation $\omega\in\mathfrak{M}_{m,n}(\mathbb{R})$ is called \emph{vexillary} if its extension $\tilde{\omega}$ is $2143$-pattern avoiding. That is, there exist no $i_1<i_2<i_3<i_4$ such that $\tilde{\omega}(i_2)<\tilde{\omega}(i_1)<\tilde{\omega}(i_4)<\tilde{\omega}(i_3)$. 
\end{Def}
Equivalently, one may use the following characterization. For each $(i,j)\in\mathcal{D}(\omega)$, define
\begin{align*}
\mathcal{R}(i,j):=\left\{r\in [1,m]\mid \omega_{rc}=1\text{ for some }c\in [1,n]\right\}
\end{align*}
and
\begin{align*}
\mathcal{C}(i,j):=\left\{c\in [1,n]\mid \omega_{rc}=1\text{ for some }r\in [1,m]\right\}.
\end{align*}
In other words, $\mathcal{R}(i,j)$ consists of all rows above the $i$-th row that contain a $1$, while $\mathcal{C}(i,j)$ consists of all columns to the left of the $j$-th column that contain a $1$. One can observe that if $(i,j)$ and $(k,l)$ belong to the same connected component of $\mathcal{D}(\omega)$, then $\mathcal{R}(i,j)=\mathcal{R}(k,l)$ and $\mathcal{C}(i,j)=\mathcal{C}(k,l)$. 

Let us denote the restriction of $\omega$ to $\mathcal{R}(i,j)$ and $\mathcal{C}(i,j)$ by $\omega\big|_{\mathcal{R}(i,j),\mathcal{C}(i,j)}$. We now state the following equivalent characterization.
\begin{lemma}\label{vex}
A partial permutation $\omega$ is vexillary if and only if $\omega\big|_{\mathcal{R}(i,j),\mathcal{C}(i,j)}$ is the identity matrix for every $(i,j)\in\mathcal{D}(\omega)$.
\end{lemma}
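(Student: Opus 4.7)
The plan is to show that $\omega\big|_{\mathcal{R}(i,j),\mathcal{C}(i,j)}$ is automatically a permutation matrix, and that failure of the identity property corresponds bijectively to the appearance of a $2143$-pattern in $\tilde{\omega}$.

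First, I would argue that for every $(i,j)\in\mathcal{D}(\omega)$ the submatrix $\omega\big|_{\mathcal{R}(i,j),\mathcal{C}(i,j)}$ is a permutation matrix. From $\tilde{\omega}(i)>j$ and $\tilde{\omega}^{-1}(j)>i$, which characterize $(i,j)\in\mathcal{D}(\omega)$, combined with the intended reading of $\mathcal{R}(i,j)$ as the rows $r<i$ whose unique $1$ lies in some column $<j$ (and symmetrically for $\mathcal{C}(i,j)$), the assignment $r\mapsto\tilde{\omega}(r)$ defines a bijection between $\mathcal{R}(i,j)$ and $\mathcal{C}(i,j)$. Hence the restriction is a permutation matrix, and it is the identity if and only if this bijection has no inversions, that is, there are no $r_1<r_2$ in $\mathcal{R}(i,j)$ and $c_1<c_2$ in $\mathcal{C}(i,j)$ with $\omega_{r_1,c_2}=\omega_{r_2,c_1}=1$.

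With this reformulation, the implication from vexillarity to the identity property follows by contrapositive. Given such an inversion inside $(i,j)\in\mathcal{D}(\omega)$, setting $i_1:=r_1$, $i_2:=r_2$, $i_3:=i$, $i_4:=\tilde{\omega}^{-1}(j)$ produces four strictly increasing positions whose $\tilde{\omega}$-values $c_2, c_1, \tilde{\omega}(i), j$ satisfy $c_1<c_2<j<\tilde{\omega}(i)$, which is precisely the $2143$-pattern forbidden in Definition \ref{Def25}, so $\omega$ is not vexillary.

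For the converse, I would start from a $2143$-pattern $i_1<i_2<i_3<i_4$ with $\tilde{\omega}$-values $b, a, d, c$ satisfying $a<b<c<d$, choose $(i_3,c)$ as the corner, and exhibit $\omega_{i_1,b}=\omega_{i_2,a}=1$ as the inversion inside $\omega\big|_{\mathcal{R}(i_3,c),\mathcal{C}(i_3,c)}$. The main obstacle, and essentially the only bookkeeping in the whole lemma, is verifying that these indices actually lie in $[1,m]\times[1,n]$, so that the pattern is realized inside $\omega$ rather than in the extended rows or columns. I would handle this via (\ref{extproperty}): strict monotonicity of $\tilde{\omega}$ on $(m,m+n]$ combined with $\tilde{\omega}(i_1)>\tilde{\omega}(i_2)$ and $\tilde{\omega}(i_3)>\tilde{\omega}(i_4)$ forces $i_1,i_3\leq m$, hence also $i_2\leq m$; while the symmetric monotonicity of $\tilde{\omega}^{-1}$ beyond $n$ gives $a,c\leq n$, and then $b<c$ yields $b\leq n$. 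These bounds ensure both $(i_3,c)\in\mathcal{D}(\omega)$ and that the two $1$s providing the inversion belong to $\omega$.
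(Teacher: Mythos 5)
Your proposal is correct and follows essentially the same route as the paper: both directions are established by the same explicit constructions, taking $(i_3,\tilde{\omega}(i_4))$ as the corner of the Rothe diagram witnessing the $2143$-pattern in one direction, and converting an inversion of the restricted matrix into the pattern $i_1=r_1<i_2=r_2<i<\tilde{\omega}^{-1}(j)$ in the other. The only differences are that you make explicit two points the paper leaves implicit, namely that $\omega\big|_{\mathcal{R}(i,j),\mathcal{C}(i,j)}$ is always a permutation matrix (so non-identity is equivalent to having an inversion) and that the pattern indices land inside $[1,m]\times[1,n]$ via (\ref{extproperty}); both checks are sound.
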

\begin{proof}
Suppose that $\tilde{\omega}$ contains a $2143$-pattern. That is, there exist $i_1<i_2<i_3<i_4$ such that $\tilde{\omega}(i_2)<\tilde{\omega}(i_1)<\tilde{\omega}(i_4)<\tilde{\omega}(i_3)$. We have
\begin{align*}
\tilde{\omega}(i_3)>\tilde{\omega}(i_4)\quad\text{and}\quad\tilde{\omega}^{-1}\left(\tilde{\omega}(i_4)\right)>i_3,
\end{align*}
which imply that $\left(i_3,\tilde{\omega}(i_4)\right)\in\mathcal{D}(\omega)$. Observe that
\begin{align*}
\{i_1,i_2\}\subseteq\mathcal{R}\left(i_3,\tilde{\omega}(i_4)\right)\quad\text{and}\quad\left\{\tilde{\omega}(i_1),\tilde{\omega}(i_2)\right\}\subseteq\mathcal{C}\left(i_3,\tilde{\omega}(i_4)\right).
\end{align*}
Since $i_1<i_2$ and $\tilde{\omega}(i_1)>\tilde{\omega}(i_2)$, the restriction $\omega|_{\mathcal{R}\left(i_3,\tilde{\omega}(i_4)\right),\mathcal{C}\left(i_3,\tilde{\omega}(i_4)\right)}$ is not the identity matrix.

Now assume that $\omega\big|_{\mathcal{R}(i,j),\mathcal{C}(i,j)}$ is not the identity matrix for some $(i,j)\in\mathcal{D}(\omega)$. Then there exist $k<l$ in $\mathcal{R}(i,j)$ and $t>s$ in $\mathcal{C}(i,j)$ such that $\omega_{kt}=\omega_{ls}=1$. Since $t=\tilde{\omega}(k)$ and $s=\tilde{\omega}(l)$, we obtain $\tilde{\omega}(l)<\tilde{\omega}(k)<j$. On the other hand, from the definition of $\mathcal{D}(\omega)$ we have
\begin{align*}
\tilde{\omega}(i)>j\quad\text{and}\quad \tilde{\omega}^{-1}(j)>i.
\end{align*}
Therefore, 
\begin{align*}
k<l<i<\tilde{\omega}^{-1}(j)
\end{align*}
and
\begin{align*}
\tilde{\omega}(l)<\tilde{\omega}(k)<\tilde{\omega}\left(\tilde{\omega}^{-1}(j)\right)=j<\tilde{\omega}(i).
\end{align*}
Hence $\tilde{\omega}$ contains a $2143$-pattern, thereby completing the proof.
\end{proof}

\subsection{Matrix Schubert varieties}\label{MSchubert}
We recall the definition of real matrix Schubert varieties introduced in \cite{Ful}. Let $\omega\in\mathfrak{M}_{m,n}(\mathbb{R})$ be a partial permutation. The \emph{real matrix Schubert variety} $\overline{X}_{\omega}$ is defined by
\begin{align*}
\overline{X}_{\omega}:=\left\{A\in\mathfrak{M}_{m,n}(\mathbb{R})\mid \text{rk}\left(A_{[p,q]}\right)\leq\text{rk}\left(\omega_{[p,q]}\right)\text{ for all }1\leq p\leq m, 1\leq q\leq n\right\}.
\end{align*}
Here $A_{[p,q]}$ denotes the upper-left $p\times q$ submatrix of $A$ and $\text{rk}$ denotes the rank. It is known that $\overline{X}_{\omega}$ is an irreducible algebraic variety of codimension equal to the size of the Rothe diagram $\left|\mathcal{D}(\omega)\right|$. The defining ideal of $\overline{X}_{\omega}$ is generated by all minors of size $1+\text{rk}\left(\omega_{[p,q]}\right)$ of $\mathbb{X}_{[p,q]}$ for each $(p,q)\in\mathcal{D}(\omega)$, where $\mathbb{X}=(x_{ij})$ is the $m\times n$ matrix of variables. 

\begin{Ex}[The determinantal varieties]\label{ex27}\normalfont
Given $m$, $n$, and $0<r<\min \{m,n\}$, the set of all $m\times n$ matrices of rank at most $r$ is called the determinantal variety. It can be viewed as the real matrix Schubert variety $\overline{X}_{\omega}$ corresponding to
\begin{align*}
\omega=
\begin{pmatrix}
I_r & O_{r\times(n-r)}\\
O_{(m-r)\times r} & O_{(m-r)\times (n-r)}
\end{pmatrix}\in\mathfrak{M}_{m,n}(\mathbb{R}),
\end{align*}
where $I_r$ and $O_{a\times b}$ denote the $r\times r$ identity and the $a\times b$ zero matrices, respectively.
\end{Ex}

\begin{Ex}\label{ex24}\normalfont
Let $\omega$ be the partial permutation in Example \ref{EX1}. Since $\mathcal{D}(\omega)=\{(1,1), (2,1), (2,3)\}$, the codimension of $\overline{X}_{\omega}$ is $|\mathcal{D}(\omega)|=3$. The defining ideal of $\overline{X}_{\omega}$ is generated by $x_{11}$, $x_{21}$, and all $2\times 2$ minors of 
\begin{align*}
\mathbb{X}_{[2,3]}=
\begin{pmatrix}
x_{11} & x_{12} & x_{13}\\
x_{21} & x_{22} & x_{23}
\end{pmatrix}.
\end{align*}
Thus $\overline{X}_{\omega}$ is the set of all $3\times 3$ matrices $A=(A_{ij})$ satisfying
\begin{align*}
A_{11}=A_{21}=0\quad\text{and}\quad A_{12}A_{23}-A_{22}A_{13}=0.
\end{align*}
Hence $\overline{X}_{\omega}$ can be identified with $Cl_3\times\mathbb{R}^3\subset\mathbb{R}^7\subset\mathbb{R}^9$, where $Cl_3\subset\mathbb{R}^4$ is the cone over the Clifford torus. This implies that $\overline{X}_{\omega}$ is a minimal cone in $\mathfrak{M}_{3,3}(\mathbb{R})$. 
\end{Ex}
As illustrated in Example \ref{ex24}, $\overline{X}_{\omega}$ may contain singularities in general. However, every $\overline{X}_{\omega}$ contains an open dense subset consisting of regular points, defined by
\begin{align*}
X_{\omega}:=\left\{A\in\mathfrak{M}_{m,n}(\mathbb{R})\mid \text{rk}\left(A_{[p,q]}\right)=\text{rk}\left(\omega_{[p,q]}\right)\text{ for all }1\leq p\leq m, 1\leq q\leq n\right\}.
\end{align*}
For $A, B\in\mathfrak{M}_{m,n}(\mathbb{R})$, one has
\begin{align}\label{orbit}
\text{rk}\left(A_{[p,q]}\right)=\text{rk}\left(B_{[p,q]}\right)&\text{ for all }1\leq p\leq m, 1\leq q\leq n\nonumber\\
&\Leftrightarrow B=\lambda A\mu\text{ for some }\lambda\in B^{-}_m, \mu\in B^{+}_n,
\end{align}
where $B^{-}_m$ denotes the group of all invertible lower triangular $m\times m$ matrices and $B^{+}_n$ denotes the group of all invertible upper triangular $n\times n$ matrices. It follows that $X_{\omega}$ is the orbit of $\omega$ under the left group action of $B^{-}_m\times B^{+}_n$, given by $(b_{-},b_{+})\cdot \omega=b_{-}\omega b^{-1}_{+}$. 

We are mainly concerned with the minimality of $X_{\omega}$ as a smooth submanifold of $\mathfrak{M}_{m,n}(\mathbb{R})$. For this purpose, we provide an explicit construction of a normal frame of $X_{\omega}$ in the next subsection.

\subsection{A normal frame of $X_{\omega}$}\label{normalframe}
For each $(\alpha,\beta)\in\mathcal{D}(\omega)$, we define
\begin{align*}
f_{(\alpha,\beta)}:=\det \left(\mathbb{X}\big|_{\mathcal{R}(\alpha,\beta)\cup\{\alpha\},\mathcal{C}(\alpha,\beta)\cup\{\beta\}}\right)\in\mathbb{R}[x_{11},\cdots,x_{mn}],
\end{align*}
where $\mathbb{X}=(x_{ij})$ is the $m\times n$ matrix of variables, and $\mathbb{X}\big|_{\mathcal{R}(\alpha,\beta)\cup\{\alpha\},\mathcal{C}(\alpha,\beta)\cup\{\beta\}}$ denotes the submatrix of $\mathbb{X}$ obtained by restricting to the rows in $\mathcal{R}(\alpha,\beta)\cup\{\alpha\}$ and the columns in $\mathcal{C}(\alpha,\beta)\cup\{\beta\}$, respectively. As mentioned at the beginning of Section \ref{MSchubert}, $f_{(\alpha,\beta)}$ belongs to the defining ideal of $\overline{X}_{\omega}$ for every $(\alpha,\beta)\in\mathcal{D}(\omega)$. Thus, 
\begin{align*}
\overline{\nabla}f_{(\alpha,\beta)}(A):=\left(\frac{\partial f_{(\alpha,\beta)}}{\partial x_{ij}}(A)\right)\in\mathfrak{M}_{m,n}(\mathbb{R})
\end{align*}
is a normal vector to $X_{\omega}$ at each point $A\in X_{\omega}$. The following results show that these normal vectors indeed form a normal frame of $X_{\omega}$.

We first establish the following lemma.
\begin{lemma}\label{Lem25}
For every $(\alpha,\beta)\in\mathcal{D}(\omega)$ and $A\in X_{\omega}$, we have
\begin{align*}
\det \left(A\big|_{\mathcal{R}(\alpha,\beta),\mathcal{C}(\alpha,\beta)}\right)\neq 0
\end{align*}
provided that $\textup{rk}\left(\omega_{[\alpha,\beta]}\right)\geq1$.
\end{lemma}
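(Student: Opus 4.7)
The plan is to show that $A|_{\mathcal{R}(\alpha,\beta),\mathcal{C}(\alpha,\beta)}$ is a $k\times k$ matrix of full rank $k$, where $k = \text{rk}(\omega_{[\alpha,\beta]})$. First I would exploit the Rothe-diagram condition $(\alpha,\beta)\in\mathcal{D}(\omega)$: it forbids a $1$ in row $\alpha$ at any column $\leq \beta$, and a $1$ in column $\beta$ at any row $\leq \alpha$. Hence every $1$ appearing in $\omega_{[\alpha,\beta]}$ actually sits strictly inside the $(\alpha-1)\times(\beta-1)$ upper-left block, and these $1$'s set up a bijection between $\mathcal{R}(\alpha,\beta)$ and $\mathcal{C}(\alpha,\beta)$. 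In particular, $\text{rk}(\omega_{[\alpha-1,\beta-1]}) = |\mathcal{R}(\alpha,\beta)| = |\mathcal{C}(\alpha,\beta)| = k$, and by the defining rank-equality condition of $X_\omega$ we get $\text{rk}(A_{[\alpha-1,\beta-1]})=k$.

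The central step is to prove that the rows of $A_{[\alpha-1,\beta-1]}$ indexed by $\mathcal{R}(\alpha,\beta)$ form a basis of its row space. I would do this by a row-by-row induction on $p\in[1,\alpha-1]$, tracking $\text{rk}(A_{[p,\beta-1]})$. Counting $1$'s of $\omega$ yields $\text{rk}(\omega_{[p,\beta-1]})=|\mathcal{R}(\alpha,\beta)\cap[1,p]|$, so when $p\in\mathcal{R}(\alpha,\beta)$ the rank strictly increases on passing from $p-1$ to $p$ (row $p$ of $A$ is linearly independent of the preceding rows), while for $p\notin\mathcal{R}(\alpha,\beta)$ the rank is preserved (row $p$ of $A$ lies in the span of the earlier rows). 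A symmetric argument on columns shows that the columns of $A_{[\alpha-1,\beta-1]}$ indexed by $\mathcal{C}(\alpha,\beta)$ form a basis of its column space.

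To conclude, I would combine these two spanning statements. The row statement implies that $A|_{\mathcal{R}(\alpha,\beta),[1,\beta-1]}$ has $k$ linearly independent rows and hence rank $k$. The restriction-to-rows map $\mathbb{R}^{\alpha-1}\to\mathbb{R}^{k}$ carries the column space of $A_{[\alpha-1,\beta-1]}$ onto that of $A|_{\mathcal{R}(\alpha,\beta),[1,\beta-1]}$, so the images of the columns indexed by $\mathcal{C}(\alpha,\beta)$, which spanned the source, still span the target. Since this target has dimension $k$ and only $k$ such images are available, they must be linearly independent. Therefore $A|_{\mathcal{R}(\alpha,\beta),\mathcal{C}(\alpha,\beta)}$ is a $k\times k$ matrix of rank $k$, and its determinant is nonzero. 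The only delicate point in executing this plan is the bookkeeping of which entries of $\omega$ are forced to vanish on passing to the strict upper-left $(\alpha-1)\times(\beta-1)$ sub-block, but the Rothe-diagram condition on $(\alpha,\beta)$ controls this cleanly.
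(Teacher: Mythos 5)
Your proposal is correct, and it reaches the conclusion by a genuinely different route from the paper. The paper invokes the orbit description of $X_\omega$ in (\ref{orbit}), writes $A=\lambda\omega\mu$ with $\lambda\in B^-_m$ and $\mu\in B^+_n$, and then computes the minor in question explicitly: it factors (up to sign) as $\det\left(\lambda\big|_{\mathcal{R}(\alpha,\beta),\mathcal{R}(\alpha,\beta)}\right)\cdot\det\left(\mu\big|_{\mathcal{C}(\alpha,\beta),\mathcal{C}(\alpha,\beta)}\right)$, a product of minors of invertible triangular matrices, hence nonzero. You instead work directly from the defining rank equalities $\mathrm{rk}(A_{[p,q]})=\mathrm{rk}(\omega_{[p,q]})$: the identity $\mathrm{rk}(\omega_{[p,\beta-1]})=|\mathcal{R}(\alpha,\beta)\cap[1,p]|$ lets your row-by-row induction certify that the rows indexed by $\mathcal{R}(\alpha,\beta)$ are exactly the pivot rows of $A_{[\alpha-1,\beta-1]}$, the symmetric argument handles the columns, and the projection argument correctly assembles these into nonsingularity of the $k\times k$ submatrix (the preliminary bookkeeping that all $1$'s of $\omega_{[\alpha,\beta]}$ lie strictly inside the $(\alpha-1)\times(\beta-1)$ block is exactly what $(\alpha,\beta)\in\mathcal{D}(\omega)$ gives). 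Your argument is more elementary in that it never uses the $B^-_m\times B^+_n$ orbit structure, only the membership conditions for $X_\omega$; the paper's factorization is shorter once (\ref{orbit}) is available and yields an explicit formula for the determinant rather than just its nonvanishing, which is in the spirit of the later computations in Section \ref{lemmaminor}. Either proof is acceptable.
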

\begin{proof}
Suppose $(\alpha,\beta)\in\mathcal{D}(\omega)$ with $k:=\text{rk}\left(\omega_{[\alpha,\beta]}\right)\geq1$. Write $\mathcal{R}(\alpha,\beta)=\{r_1,\cdots,r_k\}$. Then $\mathcal{C}(\alpha,\beta)=\{\tilde{\omega}(r_1),\cdots,\tilde{\omega}(r_k)\}$, where $\tilde{\omega}$ is the extension of $\omega$ to a permutation. 

For $A\in X_{\omega}$, consider $A'\in\mathfrak{M}_{k,k}(\mathbb{R})$ with entries given by $A'_{st}=A_{r_s\tilde{\omega}(r_t)}$. Then 
\begin{align*}
\left|\det\left(A\big|_{\mathcal{R}(\alpha,\beta),\mathcal{C}(\alpha,\beta)}\right)\right|=\left|\det\left(A'\right)\right|.
\end{align*}
We may write $A=\lambda \omega \mu$ for some $\lambda\in B^{-}_m$ and $\mu\in B^{+}_n$ as in (\ref{orbit}). This gives
\begin{align*}
A'_{st}=\left(\lambda \omega \mu\right)_{r_s\tilde{\omega}(r_t)}=\sum_{i,j}\lambda_{r_si}\omega_{ij}\mu_{j\tilde{\omega}(r_t)}.
\end{align*}
Since $\lambda$ is lower triangular, $\lambda_{r_si}=0$ whenever $i\geq\alpha$. For $i<\alpha$, there are two cases: either $\tilde{\omega}(i)\geq\beta$, in which case $\omega_{ij}=0$ for all $j$, or else $i=r_h$ for some $h$, in which case $\omega_{r_hj}=1$ for $j=\tilde{\omega}(r_h)$ and all other entries vanish. Thus,
\begin{align*}
A'_{st}=\sum_{1\leq h\leq k}\lambda_{r_sr_h}\mu_{\tilde{\omega}(r_h)\tilde{\omega}(r_t)}.
\end{align*}
This implies that 
\begin{align*}
\left|\det\left(A'\right)\right|=\left|\det\left(\lambda\big|_{\mathcal{R}(\alpha,\beta),\mathcal{R}(\alpha,\beta)}\right)\cdot\det\left(\mu\big|_{\mathcal{C}(\alpha,\beta),\mathcal{C}(\alpha,\beta)}\right)\right|.
\end{align*}
Both determinants are nonzero since $\lambda$ and $\mu$ are both invertible triangular matrices. Hence $\det\left(A'\right)\neq0$, which proves the lemma.
\end{proof}

With this lemma at hand, we can prove the following proposition. 
\begin{prop}\label{Prop26}
At each point $A\in X_{\omega}$, the set
\begin{align*}
\left\{\overline{\nabla}f_{(\alpha,\beta)}(A)\mid (\alpha,\beta)\in\mathcal{D}(\omega)\right\}
\end{align*}
forms a basis of the normal space to $X_{\omega}$.
\end{prop}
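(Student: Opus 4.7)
The plan is to reduce the claim to linear independence. Because $\overline{X}_\omega$ has codimension $|\mathcal{D}(\omega)|$ in $\mathfrak{M}_{m,n}(\mathbb{R})$, recalled at the start of Section \ref{MSchubert}, and $X_\omega$ is its smooth locus, the normal space at each $A\in X_\omega$ has dimension $|\mathcal{D}(\omega)|$; since every $\overline{\nabla}f_{(\alpha,\beta)}(A)$ already lies there, it suffices to prove that these $|\mathcal{D}(\omega)|$ vectors are linearly independent.

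The structural input is a locality property of the gradients together with Lemma \ref{Lem25}. Since $f_{(\alpha',\beta')}$ only involves the variables $x_{ij}$ with $i\in\mathcal{R}(\alpha',\beta')\cup\{\alpha'\}$ and $j\in\mathcal{C}(\alpha',\beta')\cup\{\beta'\}$, the partial derivative $\partial f_{(\alpha',\beta')}/\partial x_{ij}$ vanishes whenever $i>\alpha'$ or $j>\beta'$. Cofactor expansion at the variable $x_{\alpha'\beta'}$ further gives
\begin{equation*}
\frac{\partial f_{(\alpha',\beta')}}{\partial x_{\alpha'\beta'}}(A)=\pm\det\!\bigl(A|_{\mathcal{R}(\alpha',\beta'),\,\mathcal{C}(\alpha',\beta')}\bigr),
\end{equation*}
which is nonzero by Lemma \ref{Lem25} when $\textup{rk}(\omega_{[\alpha',\beta']})\ge 1$, and equals $1$ in the remaining rank-zero case where $f_{(\alpha',\beta')}=x_{\alpha'\beta'}$.

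With these two facts, linear independence follows by induction along a linear extension of the componentwise partial order on $\mathcal{D}(\omega)$, processed from greatest to least. Enumerate $\mathcal{D}(\omega)=\{p_1,\ldots,p_N\}$ so that whenever $p_l$ strictly dominates $p_k$ in the product order on $[1,m]\times[1,n]$, one has $l<k$. Given a relation $\sum_{k}c_{p_k}\overline{\nabla}f_{p_k}(A)=0$, I would read off the $(\alpha_k,\beta_k)$-entry of this vector equation, writing $p_k=(\alpha_k,\beta_k)$. By the locality property, only the indices $l$ with $p_l\ge p_k$ can contribute; for $l<k$ those coefficients already vanish by the inductive hypothesis, so the equation collapses to $c_{p_k}\,\partial f_{p_k}/\partial x_{\alpha_k\beta_k}(A)=0$, and nonvanishing of the diagonal derivative forces $c_{p_k}=0$. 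The argument is essentially bookkeeping once Lemma \ref{Lem25} is granted, so I do not anticipate any real obstacle beyond the care needed to fix an ordering consistent with the locality property.
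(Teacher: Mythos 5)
Your proposal is correct and follows essentially the same route as the paper: the paper orders $\mathcal{D}(\omega)$ lexicographically and shows the matrix of partial derivatives $\bigl(\partial f_{(\alpha,\beta)}/\partial x_{\epsilon\delta}(A)\bigr)$ is lower triangular with nonzero diagonal entries (via Lemma \ref{Lem25}), which is exactly your locality-plus-induction argument recast in matrix form. The only cosmetic difference is your use of an arbitrary linear extension of the componentwise order instead of lex order, and the harmless $\pm$ sign (which is in fact $+1$ here, since $\alpha'$ and $\beta'$ are the last row and column of the relevant submatrix).
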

\begin{proof}
Suppose the Rothe diagram $\mathcal{D}(\omega)$ is ordered lexicographically. Let $P$ be the $|\mathcal{D}(\omega)|\times|\mathcal{D}(\omega)|$ matrix with rows and columns indexed by $\mathcal{D}(\omega)$, whose entry in row $(\alpha,\beta)$ and column $(\epsilon,\delta)$ is 
\begin{align*}
P_{(\alpha,\beta)(\epsilon,\delta)}=\frac{\partial f_{(\alpha,\beta)}}{\partial x_{\epsilon\delta}}(A).
\end{align*}

Consider $(\alpha,\beta), (\epsilon,\delta)\in\mathcal{D}(\omega)$ with $(\alpha,\beta)<(\epsilon,\delta)$; that is, either $\alpha<\epsilon$ or $\alpha=\epsilon$ and $\beta<\delta$. In either case, $\epsilon\notin\mathcal{R}(\alpha,\beta)\cup\{\alpha\}$ or $\delta\notin\mathcal{C}(\alpha,\beta)\cup\{\beta\}$. Hence the variable $x_{\epsilon\delta}$ does not appear in $f_{(\alpha,\beta)}$, and therefore
\begin{align*}
\frac{\partial f_{(\alpha,\beta)}}{\partial x_{\epsilon\delta}}\equiv0.
\end{align*}
It follows that $P$ is lower triangular.

Moreover, the diagonal entries of $P$ are nonzero. Indeed, if $\text{rk}\left(\omega_{[\alpha,\beta]}\right)=0$, then $f_{(\alpha,\beta)}=x_{\alpha\beta}$ and hence
\begin{align*}
P_{(\alpha,\beta)(\alpha,\beta)}=\frac{\partial f_{(\alpha,\beta)}}{\partial x_{\alpha\beta}}(A)=1\neq0.
\end{align*}
If $\text{rk}\left(\omega_{[\alpha,\beta]}\right)\geq 1$, then by Lemma \ref{Lem25} we have
\begin{align*}
P_{(\alpha,\beta)(\alpha,\beta)}=\frac{\partial f_{(\alpha,\beta)}}{\partial x_{\alpha\beta}}(A)=\det \left(A\big|_{\mathcal{R}(\alpha,\beta),\mathcal{C}(\alpha,\beta)}\right)\neq 0.
\end{align*}

Therefore $P$ is lower triangular with nonzero diagonal entries, hence invertible. Consequently, the set of normal vectors
\begin{align*}
\left\{\overline{\nabla}f_{(\alpha,\beta)}(A)\mid (\alpha,\beta)\in\mathcal{D}(\omega)\right\}
\end{align*}
is linearly independent and generates a space of dimension $|\mathcal{D}(\omega)|$. Since the codimension of $X_{\omega}$ is also known to be $|\mathcal{D}(\omega)|$, the proposition follows.
\end{proof}
It follows from Proposition \ref{Prop26} that the mean curvature vector at a point $A\in X_{\omega}$ vanishes if and only if the Hessian of $f_{(\alpha,\beta)}$ has vanishing trace in the tangent space, for all $(\alpha,\beta)\in\mathcal{D}(\omega)$. As discussed in \cite{BCH}, since each variable appears in $f_{(\alpha,\beta)}$ at most once, the Hessian of $f_{(\alpha,\beta)}$ has vanishing trace in the ambient space. Hence the above condition is equivalent to the vanishing of the trace in the normal space. For later use, we record this in the following lemma.
\begin{lemma}\label{minimalcheck}
For each point $A\in X_{\omega}$, the mean curvature vector of $X_{\omega}$ at $A$ vanishes if and only if the Hessian of $f_{(\alpha,\beta)}$ has vanishing trace in the normal space of $X_{\omega}$ at $A$, for all $(\alpha,\beta)\in\mathcal{D}(\omega)$.
\end{lemma}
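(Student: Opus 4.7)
The plan is to combine three standard facts. First, Proposition~\ref{Prop26} supplies the basis $\{\overline{\nabla}f_{(\alpha,\beta)}(A)\}_{(\alpha,\beta)\in\mathcal{D}(\omega)}$ of the normal space at $A$. Since the mean curvature vector $\vec{H}$ is normal to $X_\omega$, I note that $\vec{H}=0$ at $A$ is equivalent to $\langle\vec{H},\overline{\nabla}f_{(\alpha,\beta)}(A)\rangle=0$ for every $(\alpha,\beta)\in\mathcal{D}(\omega)$. This reduces the problem to computing each of these inner products in terms of the $f_{(\alpha,\beta)}$ themselves.

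Second, I would derive the standard identity relating the ambient Hessian of a vanishing defining function to the mean curvature. Picking a local orthonormal tangent frame $\{e_i\}$ of $X_\omega$ near $A$ and differentiating $\overline{\nabla}f_{(\alpha,\beta)}\cdot e_i\equiv 0$ along each $e_i$, a routine trace computation yields
\[
\text{tr}_T\!\left(\overline{\text{Hess}}\,f_{(\alpha,\beta)}(A)\right)=-\langle\overline{\nabla}f_{(\alpha,\beta)}(A),\vec{H}\rangle,
\]
where $\text{tr}_T$ denotes the trace over the tangent space of $X_\omega$. Combined with the previous step, this shows that $\vec{H}=0$ at $A$ is equivalent to the vanishing of $\text{tr}_T(\overline{\text{Hess}}\,f_{(\alpha,\beta)}(A))$ for every $(\alpha,\beta)\in\mathcal{D}(\omega)$.

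Third, I would exploit the observation already highlighted in the excerpt: each $f_{(\alpha,\beta)}=\det(\mathbb{X}|_{\mathcal{R}(\alpha,\beta)\cup\{\alpha\},\mathcal{C}(\alpha,\beta)\cup\{\beta\}})$ is the determinant of a submatrix whose entries are pairwise distinct variables, hence linear in each $x_{ij}$. Consequently $\partial^2 f_{(\alpha,\beta)}/\partial x_{ij}^2\equiv 0$ and the full ambient trace of $\overline{\text{Hess}}\,f_{(\alpha,\beta)}(A)$ vanishes. Since this ambient trace decomposes as the sum of the tangential trace and the normal trace, the vanishing of $\text{tr}_T$ is equivalent to the vanishing of $\text{tr}_N$, the trace taken in the normal space. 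Stringing the three equivalences together gives the lemma. I do not anticipate any real obstacle; the only step requiring care is the Hessian identity in the second paragraph, which is a standard second-fundamental-form computation, and no subtlety arises from the fact that the $\overline{\nabla}f_{(\alpha,\beta)}(A)$ themselves are in general neither unit vectors nor orthogonal to one another.
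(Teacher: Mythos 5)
Your proposal is correct and follows essentially the same route as the paper: reduce $\vec H=0$ to the vanishing of the tangential traces of the Hessians via the normal frame of Proposition~\ref{Prop26} and the standard identity $\operatorname{tr}_T\overline{\text{Hess}}\,f_{(\alpha,\beta)}=-\langle\overline{\nabla}f_{(\alpha,\beta)},\vec H\rangle$, then use the fact that each variable appears at most once in $f_{(\alpha,\beta)}$ to kill the ambient trace and convert the tangential trace into the normal trace. The paper compresses exactly this argument (citing \cite{BCH}) into the paragraph preceding the lemma, so your write-up simply makes its steps explicit.
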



\section{Computations on minors}\label{lemmaminor}
The computation of the trace of the Hessian of $f_{(\alpha,\beta)}$ relies on the evaluation of its first and second derivatives, which in turn reduce to computations of minors of certain matrices. We establish these computations in a simplified form.

Let $\sigma\in\mathfrak{M}_{n,n}(\mathbb{R})$ be a permutation not equal to the identity matrix. Under the correspondence (\ref{corres}), it corresponds to an element of $\mathfrak{S}_n\setminus\{id\}$, which we also denote by $\sigma$. Since $\sigma\neq id$, there exists $i\in[1,n]$ with $\sigma(i)>\sigma(i+1)$. Let
\begin{align*}
l_\sigma:=\min\left\{i\in [1,n]\mid \sigma(i)>\sigma(i+1)\right\}.
\end{align*}
For nonzero real numbers $y_1$, $y_2$, $y_3$, define $\widehat{\sigma}(y_1,y_2,y_3)\in\mathfrak{M}_{n+1,n+1}(\mathbb{R})$ by
\begin{align*}
\widehat{\sigma}(y_1,y_2,y_3)=
\begin{pmatrix}
\sigma & 0\\
0 & 0
\end{pmatrix}
+y_1E_{l_\sigma,n+1}+y_2E_{n+1,\sigma(l_\sigma)}+y_1y_2E_{n+1,n+1}+y_3E_{l_\sigma+1,\sigma(l_\sigma)},
\end{align*}
where $E_{i,j}$ denotes the matrix with a $1$ in the $(i,j)$ entry and zeros elsewhere. Hence, $\widehat{\sigma}(y_1,y_2,y_3)$ has the following form:
\begin{align}\label{form}
\begin{pmatrix}
e_{\sigma(1)}\\
\vdots\\
e_{\sigma(l_\sigma-1)}\\
e_{\sigma(l_\sigma)}+y_1e_{n+1}\\
e_{\sigma(l_\sigma+1)}+y_3e_{\sigma(l_\sigma)}\\
e_{\sigma(l_\sigma+2)}\\
\vdots\\
e_{\sigma(n)}\\
y_2e_{\sigma(l_\sigma)}+y_1y_2e_{n+1}
\end{pmatrix}
=
\begin{blockarray}{cc*{6}{c}}
&&&\sigma(l_\sigma+1)&&\sigma(l_\sigma)&&n+1\\
&&&\downarrow&&\downarrow&&\downarrow\\
\begin{block}{cc[*{6}{c}]}
&&1&&&&&\\
&&&\ddots&&&&\\
&&&&1&&&\\
l_\sigma&\rightarrow&&&&1&&y_1\\
l_\sigma+1&\rightarrow&&1&&y_3&&\\
&&&&&&\ddots&\\
n+1&\rightarrow&&&&y_2&&y_1y_2\\
\end{block}
\end{blockarray}
\end{align}
We first compute the $(i,j)$-cofactor of $\widehat{\sigma}(y_1,y_2,y_3)$.

\begin{lemma}\label{Lem32}
We have
\begin{align*}
\left.\frac{\textup{d}}{\textup{d} t}\right|_{t=0}\det\left(\widehat{\sigma}(y_1,y_2,y_3)+tE_{i,j}\right)=
\begin{cases}
\det\sigma &\text{if }(i,j)=(n+1,n+1),\\
-y_1\det\sigma &\text{if }(i,j)=(n+1,\sigma(l_\sigma)),\\
y_1y_3\det\sigma &\text{if }(i,j)=(n+1,\sigma(l_\sigma+1)),\\
-y_2\det\sigma &\text{if }(i,j)=(l_\sigma,n+1),\\
y_1y_2\det\sigma &\text{if }(i,j)=(l_\sigma,\sigma(l_\sigma)),\\
-y_1y_2y_3\det\sigma &\text{if }(i,j)=(l_\sigma,\sigma(l_\sigma+1)),\\
0 &\text{otherwise}.
\end{cases}
\end{align*}
\end{lemma}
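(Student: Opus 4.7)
My plan is to apply the Leibniz formula directly. Writing $M := \widehat{\sigma}(y_1,y_2,y_3)$, since $\det(M+tE_{i,j})$ is affine in $t$ we have
\[
\det(M+tE_{i,j}) = \det M + t\cdot C_{i,j}(M),
\]
where $C_{i,j}(M)$ is the $(i,j)$-cofactor. Therefore the derivative at $t=0$ is exactly
\[
C_{i,j}(M) = \sum_{\pi\in\mathfrak{S}_{n+1},\,\pi(i)=j}\mathrm{sgn}(\pi)\prod_{k\neq i}M_{k,\pi(k)},
\]
so everything reduces to enumerating the permutations $\pi$ with $\pi(i)=j$ that yield a nonzero product.

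Abbreviate $a:=l_\sigma$, $b:=l_\sigma+1$, $c:=n+1$, $p:=\sigma(l_\sigma)$, $q:=\sigma(l_\sigma+1)$, $r:=n+1$. For $k\in[1,n]\setminus\{a,b\}$, the row $k$ of $M$ has its unique nonzero entry (equal to $1$) at column $\sigma(k)$. Hence for any contributing $\pi$ one is forced to have $\pi(k)=\sigma(k)$ for every $k\in[1,n]\setminus(\{a,b\}\cup\{i\})$, and the remaining special rows in $\{a,b,c\}$ must be assigned to the leftover columns using only the nonzero entries of $M$, all of which lie in columns $\{p,q,r\}$.

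Case 1: $i\notin\{a,b,c\}$. The rows $\{a,b,c\}$ must now cover $\{p,q,r,\sigma(i)\}\setminus\{j\}$. Since $\sigma(i)\notin\{p,q,r\}$ and the special rows have no nonzero entry at column $\sigma(i)$, a valid $\pi$ exists only if $j=\sigma(i)$; then the sum over such $\pi$ equals, up to a global sign depending only on $\sigma$, the determinant of the $3\times 3$ submatrix
\[
N=\begin{pmatrix}1&0&y_1\\ y_3&1&0\\ y_2&0&y_1y_2\end{pmatrix}.
\]
Since the third row of $N$ equals $y_2$ times the first, $\det N=0$, so $C_{i,j}(M)=0$.

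Case 2: $i\in\{a,b,c\}$. The same reasoning forces $j\in\{p,q,r\}$, giving nine candidate pairs. For each of the six pairs listed in the lemma there is a unique valid $\pi$ (two in the borderline $i=b$ case), identified by inspection of which columns are still available to the two remaining special rows; one then reads off the matrix entries and computes $\mathrm{sgn}(\pi)$ by comparison with the extended permutation $\sigma':=\sigma\cup\{n+1\mapsto n+1\}$, which satisfies $\mathrm{sgn}(\sigma')=\det\sigma$. In each case $\pi$ differs from $\sigma'$ by at most a transposition or a $3$-cycle, so the sign is $\pm\det\sigma$. For the three pairs with $i=b$, one of the following occurs: for $j\in\{p,r\}$ there is no valid $\pi$ (the two remaining special rows cannot simultaneously avoid a forbidden column), and for $j=q$ two valid $\pi$ yield equal products with opposite signs, so the contributions cancel --- a cancellation reflecting the identity $\text{row}_c(M)=y_2\cdot\text{row}_a(M)$.

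The main obstacle is purely bookkeeping: carefully enumerating the six assignments in Case 2 and tracking each sign against $\sigma'$. Nothing in the argument is deep once the two structural observations (ordinary rows rigidify $\pi$, and the relevant $3\times 3$ submatrix has rank $2$) are made.
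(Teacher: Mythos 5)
Your argument is correct. Both you and the paper start from the same observation that $\det(M+tE_{i,j})$ is affine in $t$, so the derivative is the $(i,j)$-cofactor, i.e.\ the determinant of $M$ with row $i$ replaced by $e_j$; but you then evaluate it by the Leibniz expansion and a combinatorial enumeration of contributing permutations, whereas the paper evaluates it by row operations. The difference shows most clearly in the vanishing cases: the paper notes that row $n+1$ of $\widehat{\sigma}(y_1,y_2,y_3)$ equals $y_2$ times row $l_\sigma$, so for every $i\notin\{l_\sigma,n+1\}$ (including $i=l_\sigma+1$) the replaced matrix still contains two proportional rows and the determinant dies in one stroke; you instead need two separate mechanisms, the singular $3\times 3$ block $N$ (whose third row is $y_2$ times its first --- the same dependence in disguise) for ordinary rows, and a two-term sign cancellation for $i=l_\sigma+1$ with $j=\sigma(l_\sigma+1)$. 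I checked that cancellation and all nine candidate pairs in your Case 2 against the stated values, and they come out right; what your route buys is that each nonzero value and its sign is read off from a single explicit permutation differing from $\sigma\cup\{n+1\mapsto n+1\}$ by a transposition or $3$-cycle, at the cost of leaving six one-line evaluations as acknowledged bookkeeping rather than writing them out. This is not a gap, but a fully written version should display those six cases, as the paper does via its matrices $M(j)$ and $N(j)$.
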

\begin{proof}
We may use the fact that the derivative is equal to the determinant of the matrix obtained by replacing the $i$-th row of $\widehat{\sigma}(y_1,y_2,y_3)$ with $e_j$. Observe that the $l_\sigma$-th row and the $(n+1)$-st row of $\widehat{\sigma}(y_1,y_2,y_3)$ are linearly dependent. If $i\notin\{ l_\sigma, n+1\}$, these two rows remain linearly dependent after the replacement, hence the determinant vanishes. Thus,
\begin{align*}
\left.\frac{\textup{d}}{\textup{d} t}\right|_{t=0}\det\left(\widehat{\sigma}(y_1,y_2,y_3)+tE_{i,j}\right)=0
\end{align*}
for all $i\in[1,n]\setminus\{l_\sigma\}$ and $j\in[1,n+1]$.

The matrices obtained by replacing the $(n+1)$-st row of $\widehat{\sigma}(y_1,y_2,y_3)$ with $e_j$ and by replacing the $l_\sigma$-th row with $e_j$ are given respectively by
\begin{align*}
M(j):=
\begin{pmatrix}
e_{\sigma(1)}\\
\vdots\\
e_{\sigma(l_\sigma-1)}\\
e_{\sigma(l_\sigma)}+y_1e_{n+1}\\
e_{\sigma(l_\sigma+1)}+y_3e_{\sigma(l_\sigma)}\\
e_{\sigma(l_\sigma+2)}\\
\vdots\\
e_{\sigma(n)}\\
e_j
\end{pmatrix},\quad
N(j):=
\begin{pmatrix}
e_{\sigma(1)}\\
\vdots\\
e_{\sigma(l_\sigma-1)}\\
e_j\\
e_{\sigma(l_\sigma+1)}+y_3e_{\sigma(l_\sigma)}\\
e_{\sigma(l_\sigma+2)}\\
\vdots\\
e_{\sigma(n)}\\
y_2e_{\sigma(l_\sigma)}+y_1y_2e_{n+1}
\end{pmatrix}.
\end{align*}

Let $i=n+1$. Then the derivative coincides with $\det M(j)$. If $j=n+1$, by applying elementary row operations to $M(n+1)$ we obtain $\det M(n+1)=\det \sigma$, and therefore
\begin{align*}
\left.\frac{\textup{d}}{\textup{d} t}\right|_{t=0}\det\left(\widehat{\sigma}(y_1,y_2,y_3)+tE_{n+1,n+1}\right)=\det\sigma.
\end{align*}
For $j\leq n$, we may write $j=\sigma(k)$ for some $k\in[1,n]$. If $k\notin\{l_\sigma,l_\sigma+1\}$, then $M(\sigma(k))$ contains two linearly dependent rows, and thus $\det M(\sigma(k))=0$. If $k\in\{l_\sigma,l_\sigma+1\}$, then by applying row operations to $M(\sigma(k))$ we obtain
\begin{align*}
\det M(\sigma(l_\sigma))=-y_1\det\sigma,\quad\det M(\sigma(l_\sigma+1))=y_1y_3\det\sigma.
\end{align*}
Therefore,
\begin{align*}
\left.\frac{\textup{d}}{\textup{d} t}\right|_{t=0}\det\left(\widehat{\sigma}(y_1,y_2,y_3)+tE_{n+1,j}\right)=
\begin{cases}
0 &\text{if }j\in[1,n]\setminus\{\sigma(l_\sigma),\sigma(l_\sigma+1)\},\\
\det\sigma &\text{if }j=n+1,\\
-y_1\det\sigma &\text{if }j=\sigma(l_\sigma),\\
y_1y_3\det\sigma &\text{if }j=\sigma(l_\sigma+1).
\end{cases}
\end{align*}

Now consider the case $i=l_\sigma$. In this case, the derivative is given by $\det N(j)$. If $j=n+1$, a similar computation yields
\begin{align*}
\left.\frac{\textup{d}}{\textup{d} t}\right|_{t=0}\det\left(\widehat{\sigma}(y_1,y_2,y_3)+tE_{l_\sigma,n+1}\right)=\det N(n+1)=-y_2\det\sigma.
\end{align*}
For $j\leq n$, write $j=\sigma(k)$. As in the previous case, we have $\det N(\sigma(k))=0$ for $k\notin\{l_\sigma,l_\sigma+1\}$, while
\begin{align*}
\det N(\sigma(l_\sigma))=y_1y_2\det\sigma,\quad\det N(\sigma(l_\sigma+1))=-y_1y_2y_3\det\sigma.
\end{align*}
Hence,
\begin{align*}
\left.\frac{\textup{d}}{\textup{d} t}\right|_{t=0}\det\left(\widehat{\sigma}(y_1,y_2,y_3)+tE_{l_\sigma,j}\right)=
\begin{cases}
0 &\text{if }j\in[1,n]\setminus\{\sigma(l_\sigma),\sigma(l_\sigma+1)\},\\
-y_2\det\sigma &\text{if }j=n+1,\\
y_1y_2\det\sigma &\text{if }j=\sigma(l_\sigma),\\
-y_1y_2y_3\det\sigma &\text{if }j=\sigma(l_\sigma+1).
\end{cases}
\end{align*}
\end{proof}

We next consider the $(n-1)\times (n-1)$ minors of $\widehat{\sigma}(y_1,y_2,y_3)$.
\begin{lemma}\label{Lem33}
For $I=[1,n+1]\setminus\{i_1,i_2\}$ and $J=[1,n+1]\setminus\{j_1,j_2\}$,
\begin{align*}
\det\left(\widehat{\sigma}(y_1,y_2,y_3)\big|_{I,J}\right)\neq0
\end{align*}
if and only if the sets $\{i_1,i_2\}$ and $\{j_1,j_2\}$ take one of the following forms:
\begin{itemize}
\item[(1)] $\{i_1,i_2\}=\{l_\sigma, n+1\}$;\quad$\{j_1,j_2\}=\{\sigma(l_\sigma),n+1\}$ or $\{\sigma(l_\sigma+1),n+1\}$.
\item[(2)] $\{i_1,i_2\}=\{l_\sigma, l_\sigma+1\}$;\quad$\{j_1,j_2\}=\{\sigma(l_\sigma),\sigma(l_\sigma+1)\}$ or $\{\sigma(l_\sigma+1),n+1\}$.
\item[(3)] For each $i\in[1,n]\setminus\{l_\sigma,l_\sigma+1\}$,\\
$\{i_1,i_2\}=\{l_\sigma, i\}$;\quad$\{j_1,j_2\}=\{\sigma(l_\sigma),\sigma(i)\}$ or $\{\sigma(l_\sigma+1),\sigma(i)\}$ or $\{n+1,\sigma(i)\}$.
\item[(4)] $\{i_1,i_2\}=\{n+1, l_\sigma+1\}$;\quad$\{j_1,j_2\}=\{\sigma(l_\sigma),\sigma(l_\sigma+1)\}$ or $\{\sigma(l_\sigma+1),n+1\}$.
\item[(5)] For each $i\in[1,n]\setminus\{l_\sigma,l_\sigma+1\}$,\\
$\{i_1,i_2\}=\{n+1, i\}$;\quad$\{j_1,j_2\}=\{\sigma(l_\sigma),\sigma(i)\}$ or $\{\sigma(l_\sigma+1),\sigma(i)\}$ or $\{n+1,\sigma(i)\}$.
\end{itemize}
\end{lemma}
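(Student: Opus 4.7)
The plan is to exploit a simple structural feature of $\widehat{\sigma}(y_1,y_2,y_3)$: by direct inspection of the form in (\ref{form}), the $(n+1)$-st row equals $y_2$ times the $l_\sigma$-th row. Therefore, any submatrix that keeps both of these rows has two linearly dependent rows, so its determinant vanishes. This forces at least one of $l_\sigma, n+1$ to belong to $\{i_1,i_2\}$, which immediately cuts down the row choices to exactly the three branches appearing in the statement: (a) $\{i_1,i_2\}=\{l_\sigma,n+1\}$ (the case listed in (1)); (b) $l_\sigma\in\{i_1,i_2\}$ and $n+1\notin\{i_1,i_2\}$ (splitting into (2) and (3) according to whether the other index is $l_\sigma+1$ or some other $i\in[1,n]\setminus\{l_\sigma,l_\sigma+1\}$); (c) $n+1\in\{i_1,i_2\}$ and $l_\sigma\notin\{i_1,i_2\}$ (splitting into (4) and (5) similarly).

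Within each branch I would run the same two-step reduction. First, every index $k\in[1,n]\setminus(\{l_\sigma,l_\sigma+1\}\cup\{i_1,i_2\})$ contributes a standard basis row $e_{\sigma(k)}$ whose unique nonzero entry lies in column $\sigma(k)$; so for the minor to be nonzero we must keep all those columns. This forces $\{j_1,j_2\}$ into the small candidate set consisting of $n+1$, the images $\sigma(l_\sigma),\sigma(l_\sigma+1)$, and (in branches (3) and (5)) the image $\sigma(i)$ of the extra removed row. Second, Laplace expansion along these standard basis rows cancels their rows and columns simultaneously, reducing the computation of the $(n-1)\times(n-1)$ minor to a $2\times 2$ determinant (or a $1\times 1$ one in the degenerate cases) formed by whichever of the three special rows $l_\sigma,l_\sigma+1,n+1$ remain, restricted to the two columns of the candidate set that are not in $\{j_1,j_2\}$.

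Carrying out this reduction explicitly, the relevant special rows have entries only in columns $\sigma(l_\sigma),\sigma(l_\sigma+1),n+1$ with values $(1,0,y_1)$, $(y_3,1,0)$, $(y_2,0,y_1y_2)$ respectively. For each branch, enumerating the $\binom{3}{2}$ or $\binom{4}{2}$ choices of the pair $\{j_1,j_2\}$ from the candidate set and computing the corresponding $2\times 2$ determinant produces a short list of nonzero values: in branch (a) one gets $\pm1$ or $\pm y_3$; in branches (2) and (4) the nonvanishing minors yield $\pm y_2,\pm y_1y_2,\pm y_1$; in branches (3) and (5) the three nonvanishing options yield $\pm y_2,\pm y_1y_2y_3,\pm y_1y_2$ and $\pm 1,\pm y_1y_3,\pm y_1$ respectively. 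In every case, the list of admissible $\{j_1,j_2\}$ that makes the small determinant nonzero is exactly the one prescribed in (1)--(5) of the statement.

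The only real obstacle is bookkeeping: five branches, each with a handful of column subsets to test, and a $2\times 2$ determinant to compute in each test. There is no conceptual subtlety beyond the initial observation that rows $l_\sigma$ and $n+1$ are proportional; the key simplification is that the Laplace expansion along the many standard basis rows cleanly decouples the problem into an independent small determinant, so the analysis never has to deal with an $(n-1)\times(n-1)$ matrix directly. Organizing the argument as \emph{rows first, then columns} as above keeps the enumeration short and mechanical.
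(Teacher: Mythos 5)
Your proposal is correct and follows essentially the same route as the paper's proof: both start from the observation that row $n+1$ is $y_2$ times row $l_\sigma$ (forcing the same five cases for $\{i_1,i_2\}$), then in each case use the kept standard basis rows to pin down the admissible columns and reduce to an explicit small determinant, arriving at the same lists and the same nonzero values. The only blemish is a trivial bookkeeping slip in your summary of branch $(4)$, where the value $\pm 1$ (for $\{j_1,j_2\}=\{\sigma(l_\sigma+1),n+1\}$) is omitted from the list of nonvanishing minors; this does not affect the argument.
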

\begin{proof}
For the following arguments, it will be useful to recall the form of $\widehat{\sigma}(y_1,y_2,y_3)$ given in (\ref{form}). Since the $l_\sigma$-th row and the $(n+1)$-st row are linearly dependent, any corresponding $(n-1)\times (n-1)$ minor vanishes whenever $\{i_1,i_2\}\cap\{l_\sigma,n+1\}=\emptyset$.
The remaining cases can be divided into the following five types:
\begin{itemize}
\item[(1)] $\{i_1,i_2\}=\{l_\sigma,n+1\}$.
\item[(2)] $\{i_1,i_2\}=\{l_\sigma,l_\sigma+1\}$.
\item[(3)] $\{i_1,i_2\}=\{l_\sigma, i\}$, where $i\in[1,n]\setminus\{l_\sigma,l_\sigma+1\}$.
\item[(4)] $\{i_1,i_2\}=\{n+1, l_\sigma+1\}$.
\item[(5)] $\{i_1,i_2\}=\{n+1, i\}$, where $i\in[1,n]\setminus\{l_\sigma,l_\sigma+1\}$.
\end{itemize}

Consider case $(1)$. After removing the two rows from $\widehat{\sigma}(y_1,y_2,y_3)$, the $(n+1)$-st column becomes a zero vector. Moreover, the remaining vectors in columns $\sigma(l_\sigma)$ and $\sigma(l_\sigma+1)$ are linearly dependent. Hence, for the minor to be nonzero, $\{j_1,j_2\}$ must be either $\{\sigma(l_\sigma),n+1\}$ or $\{\sigma(l_\sigma+1),n+1\}$. In these cases, we obtain
\begin{align*}
\left|\det\left(\widehat{\sigma}(y_1,y_2,y_3)\big|_{I,J}\right)\right|=
\begin{cases}
1 &\text{if }\{j_1,j_2\}=\{\sigma(l_\sigma),n+1\},\\
|y_3| &\text{if }\{j_1,j_2\}=\{\sigma(l_\sigma+1),n+1\}.
\end{cases}
\end{align*}

In case $(2)$, a zero vector appears in column $\sigma(l_\sigma+1)$ after removing the two rows, while the remaining vectors in columns $\sigma(l_\sigma)$ and $n+1$ are linearly dependent. Therefore $\{j_1,j_2\}$ must be either $\{\sigma(l_\sigma),\sigma(l_\sigma+1)\}$ or $\{\sigma(l_\sigma+1),n+1\}$ for the minor to be nonzero, and we have
\begin{align*}
\left|\det\left(\widehat{\sigma}(y_1,y_2,y_3)\big|_{I,J}\right)\right|=
\begin{cases}
|y_1y_2| &\text{if }\{j_1,j_2\}=\{\sigma(l_\sigma),\sigma(l_\sigma+1)\},\\
|y_2| &\text{if }\{j_1,j_2\}=\{\sigma(l_\sigma+1),n+1\}.
\end{cases}
\end{align*}

For case $(3)$, we observe that column $\sigma(i)$ becomes zero after deleting the two rows. Additionally, since the three remaining vectors in columns $\sigma(l_\sigma)$, $\sigma(l_\sigma+1)$, and $n+1$ are linearly dependent, one of them must be included in $\{j_1,j_2\}$ to have a nonzero minor. Hence, the nonzero minors are obtained in the following cases:
\begin{align*}
\left|\det\left(\widehat{\sigma}(y_1,y_2,y_3)\big|_{I,J}\right)\right|=
\begin{cases}
|y_1y_2| &\text{if }\{j_1,j_2\}=\{\sigma(l_\sigma),\sigma(i)\},\\
|y_1y_2y_3| &\text{if }\{j_1,j_2\}=\{\sigma(l_\sigma+1),\sigma(i)\},\\
|y_2| &\text{if }\{j_1,j_2\}=\{n+1,\sigma(i)\}.
\end{cases}
\end{align*}

Next, in case $(4)$, arguments analogous to those in case $(2)$ yield the following:
\begin{align*}
\left|\det\left(\widehat{\sigma}(y_1,y_2,y_3)\big|_{I,J}\right)\right|=
\begin{cases}
|y_1| &\text{if }\{j_1,j_2\}=\{\sigma(l_\sigma),\sigma(l_\sigma+1)\},\\
1 &\text{if }\{j_1,j_2\}=\{\sigma(l_\sigma+1),n+1\},\\
0 &\text{otherwise}.
\end{cases}
\end{align*}

Lastly, in case $(5)$, we may apply arguments analogous to those in case $(3)$. This gives 
\begin{align*}
\left|\det\left(\widehat{\sigma}(y_1,y_2,y_3)\big|_{I,J}\right)\right|=
\begin{cases}
|y_1| &\text{if }\{j_1,j_2\}=\{\sigma(l_\sigma),\sigma(i)\},\\
|y_1y_3| &\text{if }\{j_1,j_2\}=\{\sigma(l_\sigma+1),\sigma(i)\},\\
1 &\text{if }\{j_1,j_2\}=\{n+1,\sigma(i)\},\\
0 &\text{otherwise}.
\end{cases}
\end{align*}
Therefore the lemma is proved.
\end{proof}


\section{Non-minimality}\label{nonminimal}
\setcounter{equation}{0}
In this section we prove the following theorem, which provides a necessary condition for real matrix Schubert varieties to be minimal.
\begin{thm}\label{Thm41}
Let $\omega\in\mathfrak{M}_{m,n}(\mathbb{R})$ be a partial permutation. If $\omega$ is a non-vexillary partial permutation, then $X_{\omega}\subset\mathfrak{M}_{m,n}(\mathbb{R})$ is not minimal.
\end{thm}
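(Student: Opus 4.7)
The strategy is to exploit non-vexillarity to produce an explicit point of $X_\omega$ at which the mean curvature does not vanish. By Lemma \ref{vex}, non-vexillarity of $\omega$ supplies a position $(\alpha,\beta)\in\mathcal{D}(\omega)$ at which the sub-permutation $\sigma := \omega\big|_{\mathcal{R}(\alpha,\beta),\mathcal{C}(\alpha,\beta)}$, identified after relabeling with a member of $\mathfrak{S}_k$ for $k := \text{rk}(\omega_{[\alpha,\beta]})$, is not the identity. Applying the construction of Section \ref{lemmaminor} to this $\sigma$, I would define $\widehat{\omega}^{(\alpha,\beta)}(y_1,y_2,y_3) \in \mathfrak{M}_{m,n}(\mathbb{R})$ to be the matrix that agrees with $\omega$ outside the $(k+1)\times(k+1)$ block indexed by $\mathcal{R}(\alpha,\beta)\cup\{\alpha\}$ and $\mathcal{C}(\alpha,\beta)\cup\{\beta\}$, and whose restriction to this block is $\widehat{\sigma}(y_1,y_2,y_3)$ as in (\ref{form}). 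The first task is to show that $\widehat{\omega}^{(\alpha,\beta)}(y_1,y_2,y_3) \in X_\omega$ for all nonzero $y_i$. This reduces to verifying the rank equalities $\text{rk}(A_{[p,q]}) = \text{rk}(\omega_{[p,q]})$ for every $(p,q)$; a case analysis on whether $(p,q)$ lies inside or outside the perturbed block, combined with the linear dependence of rows $l_\sigma$ and $n+1$ in $\widehat{\sigma}$, shows that the added entries never increase any such rank.

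Once this inclusion is established, Lemma \ref{minimalcheck} reduces non-minimality to exhibiting some $(\gamma,\delta)\in\mathcal{D}(\omega)$ for which the Hessian of $f_{(\gamma,\delta)}$ has nonvanishing trace on the normal space at this point, and the natural candidate is $(\gamma,\delta) = (\alpha,\beta)$. Because each variable appears at most once in $f_{(\alpha,\beta)}$, its ambient Laplacian vanishes identically, so the normal trace agrees, up to sign, with the tangent trace. The two computational ingredients already in hand are Lemma \ref{Lem32}, which lists the six nonzero entries of $\overline{\nabla} f_{(\alpha,\beta)}$ at $\widehat{\omega}^{(\alpha,\beta)}(y_1,y_2,y_3)$ (identifying it as a sparse element of the normal frame from Proposition \ref{Prop26}), and Lemma \ref{Lem33}, which enumerates precisely the nonzero second partial derivatives $\partial^2 f_{(\alpha,\beta)}/\partial x_{ij}\partial x_{kl}$ at the same point together with their exact values as polynomials in $y_1,y_2,y_3$ and $\det\sigma$.

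The trace is then computed by expanding in the normal basis $\{\overline{\nabla} f_{(\gamma,\delta)}\}_{(\gamma,\delta)\in\mathcal{D}(\omega)}$. The support structure recorded in the proof of Proposition \ref{Prop26}, which confines the entries of each $\overline{\nabla} f_{(\gamma,\delta)}$ within $\mathcal{D}(\omega)$ to positions at or lexicographically before $(\gamma,\delta)$, together with the sparsity of Lemma \ref{Lem32}, limits the pairs of normal vectors that can interact nontrivially with the perturbed block and thereby makes the resulting sum finite and explicit. The main obstacle is the ensuing bookkeeping: after inverting the Gram matrix and summing over all relevant pairs, one must verify that no cancellation annihilates the expression. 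My plan is to specialize $y_1 = y_2 = y_3 = y$ with $y$ small and to identify a leading-order monomial in $y$ whose coefficient, a nonzero polynomial in $\det\sigma$, survives; the precise values assembled in Lemmas \ref{Lem32} and \ref{Lem33} are tailored to make this feasible, so that after the geometric setup the remainder of the argument is an essentially organizational finite-dimensional computation.
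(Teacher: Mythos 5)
Your overall strategy coincides with the paper's: pick $(\alpha,\beta)\in\mathcal{D}(\omega)$ witnessing non-vexillarity, perturb $\omega$ inside the block $\left(\mathcal{R}(\alpha,\beta)\cup\{\alpha\}\right)\times\left(\mathcal{C}(\alpha,\beta)\cup\{\beta\}\right)$ to the point $\widehat{\omega}^{(\alpha,\beta)}(y_1,y_2,y_3)\in X_\omega$, and show via Lemma \ref{minimalcheck} that the normal trace of $\mathrm{Hess}\,f_{(\alpha,\beta)}$ is nonzero there. (For membership in $X_\omega$ the paper uses the shortcut that the perturbation is realized by row operations from below and column operations from the right, i.e.\ by the $B^{-}_m\times B^{+}_n$ action of (\ref{orbit}), which is cleaner than your rank-by-rank case analysis but equivalent.)

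There are, however, two concrete gaps in the bookkeeping step, which is where the real work of the proof lies. First, you take $(\alpha,\beta)$ to be an arbitrary position at which $\omega\big|_{\mathcal{R}(\alpha,\beta),\mathcal{C}(\alpha,\beta)}$ fails to be the identity; the paper additionally requires $(\alpha,\beta)$ to minimize $\alpha+\beta$ among all such positions. This minimality is used repeatedly (in Lemmas \ref{Lem44}, \ref{Lem45}, \ref{Lem47}, and in the Gram matrix computation): without it, other $f_{(i,j)}$ can acquire nonzero first derivatives at the perturbed entries, the Gram matrix loses the block-diagonal structure that makes $(G^{-1})_{12}=y_1y_2y_3\delta$ exact, and additional Hessian cross-terms appear whose orders in $y$ are not a priori higher than that of the main term $\pm 2\delta y_1y_2y_3^2$; your leading-order argument with $y_1=y_2=y_3=y$ is then not guaranteed to isolate a surviving monomial. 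Second, your control of the normal frame rests on the triangularity observed in the proof of Proposition \ref{Prop26}, but that only constrains the entries of $\overline{\nabla}f_{(\gamma,\delta)}$ at positions belonging to $\mathcal{D}(\omega)$; each gradient generically has many nonzero entries \emph{outside} $\mathcal{D}(\omega)$, and it is precisely those that could pair with the nonzero second derivatives enumerated in Lemma \ref{Lem33}. Ruling this out requires the determinantal case analysis of the paper's Lemmas \ref{Lem44}--\ref{Lem46}, which show that at the perturbed point each $\overline{\nabla}f_{(i,j)}$ with $(i,j)\neq(\alpha,\beta)$ is supported, within the relevant range, essentially at $(i,j)$ alone (with one exceptional position), and these lemmas again depend on the minimal choice of $(\alpha,\beta)$. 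So the missing ingredient is not merely organizational: you need to fix the choice of $(\alpha,\beta)$ and establish the vanishing statements for the first derivatives of the other $f_{(i,j)}$ before the trace computation closes.
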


\subsection{Non-minimal example}\label{subsec41}
We begin by observing the smallest non-minimal example. Consider the partial permutation
\begin{align*}
\omega=
\begin{pmatrix}
0 & 1 & 0\\
1 & 0 & 0\\
0 & 0 & 0
\end{pmatrix},
\end{align*}
which is not vexillary by Lemma \ref{vex}. The corresponding real matrix Schubert variety $\overline{X}_{\omega}\subset\mathfrak{M}_{3,3}(\mathbb{R})$ is defined by the equations
\begin{align*}
x_{11}=0,\quad\det
\begin{pmatrix}
x_{11} & x_{12} & x_{13}\\
x_{21} & x_{22} & x_{23}\\
x_{31} & x_{32} & x_{33}
\end{pmatrix}=0.
\end{align*}
For nonzero real numbers $y_1$, $y_2$, $y_3$, set
\begin{align*}
\omega(y_1,y_2,y_3):=
\begin{pmatrix}
0 & 1 & y_1\\
1 & y_3 & 0\\
0 & y_2 & y_1y_2
\end{pmatrix}.
\end{align*}
It is straightforward to verify that $\omega(y_1,y_2,y_3)\in X_{\omega}\subset \overline{X}_{\omega}$. We next check the minimality of $X_{\omega}$ along these points using Lemma \ref{minimalcheck}.

Since the Rothe diagram $\mathcal{D}(\omega)$ is $\{(1,1),(3,3)\}$, by Proposition \ref{Prop26} we have a normal frame $\left\{\overline{\nabla}f_{(1,1)}, \overline{\nabla}f_{(3,3)}\right\}$. At $\omega(y_1,y_2,y_3)\in X_{\omega}$, these are 
\begin{align*}
\overline{\nabla}f_{(1,1)}=
\begin{pmatrix}
1 & 0 & 0\\
0 & 0 & 0\\
0 & 0 & 0
\end{pmatrix},\quad
\overline{\nabla}f_{(3,3)}=
\begin{pmatrix}
y_1y_2y_3 & -y_1y_2 & y_2\\
0 & 0 & 0\\
-y_1y_3 & y_1 & -1
\end{pmatrix}.
\end{align*}
Then the Gram matrix is given by
\begin{align*}
G:&=
\begin{pmatrix}
\left\langle\overline{\nabla}f_{(1,1)}, \overline{\nabla}f_{(1,1)}\right\rangle & \left\langle\overline{\nabla}f_{(1,1)}, \overline{\nabla}f_{(3,3)}\right\rangle\\
\left\langle\overline{\nabla}f_{(3,3)}, \overline{\nabla}f_{(1,1)}\right\rangle & \left\langle\overline{\nabla}f_{(3,3)}, \overline{\nabla}f_{(3,3)}\right\rangle
\end{pmatrix}\\&=
\begin{pmatrix}
1 & y_1y_2y_3\\
y_1y_2y_3 & (1+y_2^2)(1+y_1^2+y_1^2y_3^2)
\end{pmatrix},
\end{align*}
and its inverse is 
\begin{align*}
G^{-1}=\frac{1}{1+y_1^2+y_2^2+y_1^2y_2^2+y_1^2y_3^2}
\begin{pmatrix}
(1+y_2^2)(1+y_1^2+y_1^2y_3^2) & -y_1y_2y_3\\
-y_1y_2y_3 & 1
\end{pmatrix}.
\end{align*} 
Furthermore, we have
\begin{align*}
\text{Hess}f_{(3,3)}\left(\overline{\nabla}f_{(1,1)},\overline{\nabla}f_{(1,1)}\right)=\text{Hess}f_{(3,3)}\left(\overline{\nabla}f_{(3,3)},\overline{\nabla}f_{(3,3)}\right)=0,
\end{align*}
while
\begin{align*}
\text{Hess}f_{(3,3)}\left(\overline{\nabla}f_{(1,1)},\overline{\nabla}f_{(3,3)}\right)=-y_3.
\end{align*}
It follows that the trace of the Hessian of $f_{(3,3)}$ in the normal space is 
\begin{align*}
\frac{2y_1y_2y_3^2}{1+y_1^2+y_2^2+y_1^2y_2^2+y_1^2y_3^2},
\end{align*}
and hence nonzero. By Lemma \ref{minimalcheck}, the mean curvature vector of $X_{\omega}$ is therefore nonzero along the points $\omega(y_1,y_2,y_3)$. 

As we shall see, the method used in this example provides the basis for the arguments in the proof of the theorem.

\subsection{Perturbation of $\omega$ in $X_\omega$}\label{subsec42}

Let $\omega\in\mathfrak{M}_{m,n}(\mathbb{R})$ be a non-vexillary partial permutation. Then Lemma \ref{vex} implies that there exists $(\alpha,\beta)\in\mathcal{D}(\omega)$ such that 
\begin{align}\label{star}
\text{rk}\left(\omega_{[\alpha,\beta]}\right)\geq2\quad\text{and}\quad \omega\big|_{\mathcal{R}(\alpha,\beta),\mathcal{C}(\alpha,\beta)}\text{ is not the identity matrix}. 
\end{align}
We take $(\alpha,\beta)\in\mathcal{D}(\omega)$ as one of the pairs that minimize the sum $\alpha+\beta$ so that it is located at the top-left corner within its connected component.

Write $\mathcal{R}(\alpha,\beta)=\{r_1<\cdots<r_k\}$ and $\mathcal{C}(\alpha,\beta)=\{c_1<\cdots<c_k\}$. Under the identification (\ref{corres}), the restriction $\omega\big|_{\mathcal{R}(\alpha,\beta),\mathcal{C}(\alpha,\beta)}$ can be viewed as an element of $\mathfrak{S}_k\setminus\{id\}$, denoted by $\omega^{(\alpha,\beta)}$. As in Section \ref{lemmaminor}, set
\begin{align*}
L:=\min\left\{i\in[1,k]\Bigm| \omega^{(\alpha,\beta)}(i)>\omega^{(\alpha,\beta)}(i+1)\right\}.
\end{align*}
Define $\widehat{\omega}^{(\alpha,\beta)}(y_1,y_2,y_3)\in\mathfrak{M}_{m,n}(\mathbb{R})$ by
\begin{align}\label{omegappp}
\widehat{\omega}^{(\alpha,\beta)}&(y_1,y_2,y_3)\nonumber\\
&=\omega+y_1E_{r_L,\beta}+y_2E_{\alpha,c_{\omega^{(\alpha,\beta)}(L)}}+y_1y_2E_{\alpha,\beta}+y_3E_{r_{L+1},c_{\omega^{(\alpha,\beta)}(L)}},
\end{align}
where $y_1,y_2,y_3$ are nonzero real numbers, and $E_{i,j}$ denotes the matrix with a $1$ in the $(i,j)$ entry and zeros elsewhere. Restricting the rows and columns of $\widehat{\omega}^{(\alpha,\beta)}(y_1,y_2,y_3)$ to $\mathcal{R}(\alpha,\beta)\cup\{\alpha\}$ and $\mathcal{C}(\alpha,\beta)\cup\{\beta\}$, respectively, yields
\begin{align}\label{hathat}
\widehat{\omega}^{(\alpha,\beta)}(y_1,y_2,y_3)\big|_{\mathcal{R}(\alpha,\beta)\cup\{\alpha\},\mathcal{C}(\alpha,\beta)\cup\{\beta\}}=\widehat{\omega^{(\alpha,\beta)}}(y_1,y_2,y_3),
\end{align}
where the last term is defined in Section \ref{lemmaminor}. Since $\widehat{\omega}^{(\alpha,\beta)}(y_1,y_2,y_3)$ is obtained from $\omega$ by applying elementary column operations to the right and row operations to the bottom, it follows that $\widehat{\omega}^{(\alpha,\beta)}(y_1,y_2,y_3)\in X_\omega$. 

In the following subsections, we will show that the trace of the Hessian of $f_{(\alpha,\beta)}$ in the normal space does not vanish at these points. By Lemma \ref{minimalcheck}, this implies that the mean curvature vectors at these points are nonzero, thereby completing the proof.

\subsection{The first and second derivatives of $f_{(i.j)}$}
We consider the first and second derivatives of $f_{(i,j)}$ for $(i,j)\in\mathcal{D}(\omega)$. The next two lemmas are immediate consequences of Lemma \ref{Lem32} and \ref{Lem33}.
\begin{lemma}\label{Lem42}
For simplicity, let us write $\delta=\det \left(\omega^{(\alpha,\beta)}\right)$. Then
\begin{align*}
\frac{\partial f_{(\alpha,\beta)}}{\partial x_{ab}}\left(\widehat{\omega}^{(\alpha,\beta)}(y_1,y_2,y_3)\right)=
\begin{cases}
\delta &\text{if }(a,b)=(\alpha,\beta),\\
-y_1\delta &\text{if }(a,b)=(\alpha,c_{\omega^{(\alpha,\beta)}(L)}),\\
y_1y_3\delta &\text{if }(a,b)=(\alpha,c_{\omega^{(\alpha,\beta)}(L+1)}),\\
-y_2\delta &\text{if }(a,b)=(r_L,\beta),\\
y_1y_2\delta &\text{if }(a,b)=(r_L,c_{\omega^{(\alpha,\beta)}(L)}),\\
-y_1y_2y_3\delta &\text{if }(a,b)=(r_L,c_{\omega^{(\alpha,\beta)}(L+1)}),\\
0 &\text{otherwise}.
\end{cases}
\end{align*}
\end{lemma}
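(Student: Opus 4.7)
The plan is to reduce the computation to Lemma \ref{Lem32} through a straightforward identification of indices, together with the matching of submatrices recorded in \eqref{hathat}.

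First I would note that $f_{(\alpha,\beta)}$ involves only those variables $x_{ab}$ with $a\in\mathcal{R}(\alpha,\beta)\cup\{\alpha\}$ and $b\in\mathcal{C}(\alpha,\beta)\cup\{\beta\}$, since it is by definition the determinant of the submatrix of $\mathbb{X}$ supported on these rows and columns. Consequently $\partial f_{(\alpha,\beta)}/\partial x_{ab}\equiv 0$ whenever $(a,b)$ lies outside this window, which already accounts for the last branch of the claimed formula. For indices $(a,b)$ inside the window, the partial derivative depends only on the entries of the submatrix $\widehat{\omega}^{(\alpha,\beta)}(y_1,y_2,y_3)\big|_{\mathcal{R}(\alpha,\beta)\cup\{\alpha\},\mathcal{C}(\alpha,\beta)\cup\{\beta\}}$, and by \eqref{hathat} this submatrix is precisely $\widehat{\omega^{(\alpha,\beta)}}(y_1,y_2,y_3)$ analyzed in Section \ref{lemmaminor}.

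Next I would record the index dictionary. With $\mathcal{R}(\alpha,\beta)=\{r_1<\cdots<r_k\}$ and $\mathcal{C}(\alpha,\beta)=\{c_1<\cdots<c_k\}$ arranged in increasing order, the $L$-th row of the submatrix corresponds to the row indexed by $r_L$ in $\mathfrak{M}_{m,n}(\mathbb{R})$, while the appended row corresponds to $\alpha$. Similarly, the column labeled $\omega^{(\alpha,\beta)}(L)$ inside the submatrix corresponds to $c_{\omega^{(\alpha,\beta)}(L)}$, and the appended column corresponds to $\beta$. Under this dictionary the roles $l_\sigma\leftrightarrow r_L$ and $n+1\leftrightarrow\alpha$ on rows, together with $\sigma(l_\sigma)\leftrightarrow c_{\omega^{(\alpha,\beta)}(L)}$, $\sigma(l_\sigma+1)\leftrightarrow c_{\omega^{(\alpha,\beta)}(L+1)}$ and $n+1\leftrightarrow\beta$ on columns, line up exactly.

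Applying Lemma \ref{Lem32} with $\sigma=\omega^{(\alpha,\beta)}$ (so that $k$ plays the role of $n$ and $L$ plays the role of $l_\sigma$) then reads off each of the six nonzero values in the statement, with $\det\sigma$ replaced by $\delta=\det\left(\omega^{(\alpha,\beta)}\right)$. I do not anticipate any genuine obstacle here, since the combinatorics of the nonvanishing pattern has already been isolated in Lemma \ref{Lem32}; the only point requiring care is to verify that the increasing labelings of $\mathcal{R}(\alpha,\beta)$ and $\mathcal{C}(\alpha,\beta)$ place $r_L$ and $c_{\omega^{(\alpha,\beta)}(L)}$ in the rows and columns of the submatrix in the same order used in Section \ref{lemmaminor}, so that the translation between the two settings is faithful.
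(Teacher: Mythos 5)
Your proposal is correct and follows essentially the same route as the paper's proof: dispose of the out-of-window indices by noting that $x_{ab}$ does not appear in $f_{(\alpha,\beta)}$, then invoke \eqref{hathat} and Lemma \ref{Lem32} with $\sigma=\omega^{(\alpha,\beta)}$. The only (harmless) imprecision is the remark that the out-of-window case ``already accounts for the last branch''; in-window pairs not among the six listed positions also fall into that branch, but these are covered by the ``otherwise'' case of Lemma \ref{Lem32}, which your argument applies anyway.
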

\begin{proof}
Whenever $a\notin \mathcal{R}(\alpha,\beta)\cup\{\alpha\}$ or $b\notin \mathcal{C}(\alpha,\beta)\cup\{\beta\}$, the variable $x_{ab}$ does not appear in $f_{(\alpha,\beta)}$, and therefore
\begin{align*}
\frac{\partial f_{(\alpha,\beta)}}{\partial x_{ab}}=0.
\end{align*}
Now suppose $a\in\mathcal{R}(\alpha,\beta)\cup\{\alpha\}$ and $b\in\mathcal{C}(\alpha,\beta)\cup\{\beta\}$. The lemma then follows from (\ref{hathat}) and Lemma \ref{Lem32} with $\sigma=\omega^{(\alpha,\beta)}$.
\end{proof}

\begin{lemma}\label{Lem43}
The following holds for the second derivatives of $f_{(\alpha,\beta)}$.
\begin{align*}
\frac{\partial^2f_{(\alpha,\beta)}}{\partial x_{a_1b_1}\partial x_{a_2b_2}}\left(\widehat{\omega}^{(\alpha,\beta)}(y_1,y_2,y_3)\right)\neq 0
\end{align*}
if and only if the sets $\{a_1,a_2\}$ and $\{b_1,b_2\}$ take one of the following forms:
\begin{itemize}
\item[(1)] $\{a_1,a_2\}=\{r_L,\alpha\}$;\quad$\{b_1,b_2\}=\{c_{\omega^{(\alpha,\beta)}(L)},\beta\}$ or $\{c_{\omega^{(\alpha,\beta)}(L+1)},\beta\}$.
\item[(2)] $\{a_1,a_2\}=\{r_L,r_{L+1}\}$;\quad$\{b_1,b_2\}=\{c_{\omega^{(\alpha,\beta)}(L+1)},c_{\omega^{(\alpha,\beta)}(L)}\}$ or $\{c_{\omega^{(\alpha,\beta)}(L+1)},\beta\}$.
\item[(3)] For each $r_i\in\mathcal{R}(\alpha,\beta)\setminus\{r_L,r_{L+1}\}$,\\
$\{a_1,a_2\}=\{r_L, r_i\}$;\quad$\{b_1,b_2\}=\{c_{\omega^{(\alpha,\beta)}(L)},c_{\omega^{(\alpha,\beta)}(i)}\}$ or $\{c_{\omega^{(\alpha,\beta)}(L+1)},c_{\omega^{(\alpha,\beta)}(i)}\}$ or $\{\beta, c_{\omega^{(\alpha,\beta)}(i)}\}$.
\item[(4)] $\{a_1,a_2\}=\{\alpha, r_{L+1}\}$;\quad$\{b_1,b_2\}=\{c_{\omega^{(\alpha,\beta)}(L+1)},c_{\omega^{(\alpha,\beta)}(L)}\}$ or $\{c_{\omega^{(\alpha,\beta)}(L+1)},\beta\}$.
\item[(5)] For each $r_i\in\mathcal{R}(\alpha,\beta)\setminus\{r_L,r_{L+1}\}$,\\
$\{a_1,a_2\}=\{\alpha, r_i\}$;\quad$\{b_1,b_2\}=\{c_{\omega^{(\alpha,\beta)}(L)},c_{\omega^{(\alpha,\beta)}(i)}\}$ or $\{c_{\omega^{(\alpha,\beta)}(L+1)},c_{\omega^{(\alpha,\beta)}(i)}\}$ or $\{\beta, c_{\omega^{(\alpha,\beta)}(i)}\}$.
\end{itemize}
\end{lemma}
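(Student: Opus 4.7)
The plan is to reduce Lemma~\ref{Lem43} directly to Lemma~\ref{Lem33} via the identification (\ref{hathat}). Since $f_{(\alpha,\beta)}$ is a determinant involving only the variables $x_{ab}$ with $a \in \mathcal{R}(\alpha,\beta) \cup \{\alpha\}$ and $b \in \mathcal{C}(\alpha,\beta) \cup \{\beta\}$, and since each such variable appears linearly in that determinant, the second derivative $\partial^2 f_{(\alpha,\beta)} / \partial x_{a_1 b_1} \partial x_{a_2 b_2}$ vanishes identically whenever either some $(a_i, b_i)$ falls outside this index region, or $a_1 = a_2$, or $b_1 = b_2$. This step is routine and already eliminates all configurations not covered by the five cases in the statement.

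In the remaining case, where $a_1 \neq a_2$ both lie in $\mathcal{R}(\alpha,\beta) \cup \{\alpha\}$ and $b_1 \neq b_2$ both lie in $\mathcal{C}(\alpha,\beta) \cup \{\beta\}$, a standard double cofactor expansion of the determinant defining $f_{(\alpha,\beta)}$ yields, at any point $A$,
\begin{align*}
\frac{\partial^2 f_{(\alpha,\beta)}}{\partial x_{a_1 b_1} \partial x_{a_2 b_2}}(A) = \pm \det\left(A\big|_{(\mathcal{R}(\alpha,\beta)\cup\{\alpha\})\setminus\{a_1,a_2\},\,(\mathcal{C}(\alpha,\beta)\cup\{\beta\})\setminus\{b_1,b_2\}}\right),
\end{align*}
where the sign depends only on the relative positions of $\{a_1,a_2\}$ and $\{b_1,b_2\}$ within the ordered index sets and is in particular nonzero. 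Evaluating at $A = \widehat{\omega}^{(\alpha,\beta)}(y_1,y_2,y_3)$ and applying (\ref{hathat}), the right-hand side becomes, up to a nonzero sign, a $(k-1)\times(k-1)$ minor of $\widehat{\omega^{(\alpha,\beta)}}(y_1,y_2,y_3) \in \mathfrak{M}_{k+1,k+1}(\mathbb{R})$, where $k=|\mathcal{R}(\alpha,\beta)|$.

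It then remains to translate between index conventions. Under the bijections $r_s \leftrightarrow s$ and $\alpha \leftrightarrow k+1$ on rows, $c_t \leftrightarrow t$ and $\beta \leftrightarrow k+1$ on columns, together with $L = l_{\omega^{(\alpha,\beta)}}$, the five combinatorial cases enumerated in Lemma~\ref{Lem33} (with $n$ there replaced by $k$) transcribe term-by-term into cases (1)--(5) above. The main, and essentially only, obstacle is this bookkeeping: one must verify that every admissible pair $(\{i_1,i_2\},\{j_1,j_2\})$ in Lemma~\ref{Lem33} corresponds, without omission or duplication, to exactly one configuration of $(\{a_1,a_2\},\{b_1,b_2\})$ listed here, and conversely. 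Once the dictionary is laid out, Lemma~\ref{Lem33} supplies the nonvanishing criterion immediately and the proof is complete.
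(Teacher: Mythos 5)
Your proposal is correct and follows essentially the same route as the paper: the paper's proof likewise observes that both variables must lie in the index region $\left(\mathcal{R}(\alpha,\beta)\cup\{\alpha\}\right)\times\left(\mathcal{C}(\alpha,\beta)\cup\{\beta\}\right)$ and then invokes Lemma \ref{Lem33} with $\sigma=\omega^{(\alpha,\beta)}$ through the identification (\ref{hathat}). The extra detail you supply (the double cofactor expansion identifying the second derivative with a signed complementary minor, and the explicit index dictionary $r_s\leftrightarrow s$, $\alpha\leftrightarrow k+1$, $c_t\leftrightarrow t$, $\beta\leftrightarrow k+1$) is exactly what the paper leaves implicit.
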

\begin{proof}
Both variables $x_{a_1b_1}$ and $x_{a_2b_2}$ appear in $f_{(\alpha,\beta)}$ precisely when $\{a_1,a_2\}\subseteq\mathcal{R}(\alpha,\beta)\cup\{\alpha\}$ and $\{b_1,b_2\}\subseteq\mathcal{C}(\alpha,\beta)\cup\{\beta\}$. In this case, Lemma \ref{Lem33} applied to $\sigma=\omega^{(\alpha,\beta)}$, together with (\ref{hathat}), establishes the lemma.
\end{proof}

For $(i,j)\in\mathcal{D}(\omega)\setminus\{(\alpha,\beta)\}$, we consider the derivative
\begin{align*}
\frac{\partial f_{(i,j)}}{\partial x_{ab}}\left(\widehat{\omega}^{(\alpha,\beta)}(y_1,y_2,y_3)\right)
\end{align*}
when $a\leq\alpha$ and $b\leq\beta$. The derivative is trivially zero if $a\notin\mathcal{R}(i,j)\cup\{i\}$ or $b\notin\mathcal{C}(i,j)\cup\{j\}$ as the variable $x_{ab}$ is not contained in $f_{(i,j)}$. Hence we may assume that $a\in\mathcal{R}(i,j)\cup\{i\}$ and $b\in\mathcal{C}(i,j)\cup\{j\}$. In this case, the derivative is, up to sign, the determinant of
\begin{align}\label{restriction}
\widehat{\omega}^{(\alpha,\beta)}(y_1,y_2,y_3)\Big|_{\left(\mathcal{R}(i,j)\cup\{i\}\right)\setminus\{a\},\left(\mathcal{C}(i,j)\cup\{j\}\right)\setminus\{b\}}.
\end{align}
We divide into several cases to determine when the derivative, or equivalently the determinant,  is nonzero.

\begin{lemma}\label{Lem44}
Let $(i,j)\in\mathcal{D}(\omega)\setminus\{(\alpha,\beta)\}$, and let $a\leq \alpha$ and $b\leq \beta$. Assume further that $a\in\mathcal{R}(i,j)\cup\{i\}$ and $b\in\mathcal{C}(i,j)\cup\{j\}$. Then
\begin{align*}
\det\left(\widehat{\omega}^{(\alpha,\beta)}(y_1,y_2,y_3)\Big|_{\left(\mathcal{R}(i,j)\cup\{i\}\right)\setminus\{a\},\left(\mathcal{C}(i,j)\cup\{j\}\right)\setminus\{b\}}\right)=0
\end{align*}
unless $a=i$.
\end{lemma}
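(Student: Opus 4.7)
\emph{Overall strategy.} Since $a\neq i$, row $i$ of $\widehat{\omega}^{(\alpha,\beta)}(y_1,y_2,y_3)$ is retained in the restricted submatrix. My plan is a case analysis on the position of $i$: in almost every case either row $i$ or column $j$ of the restricted matrix will turn out to be identically zero, and the single remaining configuration will be eliminated by invoking the minimality of $\alpha+\beta$ subject to (\ref{star}).

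\emph{Structural preliminaries.} By (\ref{omegappp}), the matrix $\widehat{\omega}^{(\alpha,\beta)}(y_1,y_2,y_3)$ differs from $\omega$ only at the four positions $(r_L,\beta)$, $(\alpha,c_{\omega^{(\alpha,\beta)}(L)})$, $(\alpha,\beta)$, and $(r_{L+1},c_{\omega^{(\alpha,\beta)}(L)})$; so perturbations touch only rows in $\{r_L,r_{L+1},\alpha\}$ and columns in $\{c_{\omega^{(\alpha,\beta)}(L)},\beta\}$. The descent at $L$ gives $c_{\omega^{(\alpha,\beta)}(L+1)}<c_{\omega^{(\alpha,\beta)}(L)}<\beta$; the hypothesis $(i,j)\in\mathcal{D}(\omega)$ gives $\tilde{\omega}(i)>j$ and $\tilde{\omega}^{-1}(j)>i$; and the restricted columns lie in $\mathcal{C}(i,j)\cup\{j\}\subseteq[1,j]$.

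\emph{Case analysis.} (a) If $i\notin\{r_L,r_{L+1},\alpha\}$, row $i$ is unperturbed and, by $\tilde{\omega}(i)>j$, vanishes in every column $\leq j$. (b) If $i\in\{r_L,r_{L+1}\}$, then $\tilde{\omega}(i)>j$ combined with the descent forces $c_{\omega^{(\alpha,\beta)}(L)}>j$; together with $\beta>c_{\omega^{(\alpha,\beta)}(L)}$, the perturbation columns relevant to row $i$ and the original $1$ of row $i$ all sit strictly above $j$, so row $i$ vanishes in the restricted columns. (c) If $i=\alpha$ with $j>\beta$: since $\tilde{\omega}^{-1}(\beta)>\alpha$, the column $\beta$ does not lie in $\mathcal{C}(\alpha,j)$, while $b\leq\beta<j$ keeps $j$ itself in the restricted columns; a direct check then shows that column $j$ of the restricted matrix is identically zero, since no perturbation sits at column $j$ and $\omega_{r,j}=0$ for every $r\leq\alpha$. (d) If $i=\alpha$ with $j<\beta$: when $c_{\omega^{(\alpha,\beta)}(L)}>j$, the only potentially relevant perturbation in row $\alpha$ (namely $y_2$ at column $c_{\omega^{(\alpha,\beta)}(L)}$) falls outside the restricted columns, so row $\alpha$ vanishes; when instead $c_{\omega^{(\alpha,\beta)}(L)}<j$, the inequality $c_{\omega^{(\alpha,\beta)}(L+1)}<c_{\omega^{(\alpha,\beta)}(L)}<j$ places both $r_L,r_{L+1}$ in $\mathcal{R}(\alpha,j)$, and the inherited descent shows that $\omega|_{\mathcal{R}(\alpha,j),\mathcal{C}(\alpha,j)}$ is not the identity, so $(\alpha,j)$ satisfies (\ref{star}) with $\alpha+j<\alpha+\beta$, contradicting the choice of $(\alpha,\beta)$ as a minimizer.

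\emph{Main obstacle.} The delicate step is the last subcase of (d), where the perturbation genuinely makes row $\alpha$ nonzero in the restricted columns and column $j$ need not vanish, so a purely local computation no longer suffices. The crux is to recognise that this very configuration itself conflicts with the minimality of $\alpha+\beta$ among pairs satisfying (\ref{star}); this is the only place in the argument where the special choice of $(\alpha,\beta)$, rather than an arbitrary element of $\mathcal{D}(\omega)$, becomes essential.
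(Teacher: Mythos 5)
Your proof is correct and follows essentially the same route as the paper: in every case either row $i$ or column $j$ of the restricted matrix is shown to be identically zero, with the minimality of $\alpha+\beta$ among pairs satisfying (\ref{star}) invoked exactly where the paper invokes it, namely in the case $i=\alpha$. The only cosmetic difference is that you split the $i=\alpha$ case on $j$ versus $\beta$ and use minimality to exclude the regime $c_{\omega^{(\alpha,\beta)}(L)}<j<\beta$, whereas the paper splits on $j$ versus $c_{\omega^{(\alpha,\beta)}(L)}$ and uses minimality to force $j>\beta$; the two organizations are logically equivalent.
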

\begin{proof}
Suppose $a\neq i$. Then the $i$-th row of $\widehat{\omega}^{(\alpha,\beta)}(y_1,y_2,y_3)$ is contained in the restriction (\ref{restriction}). Observe that all entries in the $i$-th row of $\omega$ are zero in columns $1,\cdots,j$, since $(i,j)\in\mathcal{D}(\omega)$. 

If $i\notin\{r_L,r_{L+1},\alpha\}$, it follows from (\ref{omegappp}) that all entries in the $i$-th row of $\widehat{\omega}^{(\alpha,\beta)}(y_1,y_2,y_3)$ in the columns $\left(\mathcal{C}(i,j)\cup\{j\}\right)\setminus\{b\}$ are also zero, and consequently the determinant is zero.

If $i=r_L$, then since $\omega_{r_Lc_{\omega^{(\alpha,\beta)}(L)}}=1$ and $(i,j)=(r_L,j)\in\mathcal{D}(\omega)$, we must have $j<c_{\omega^{(\alpha,\beta)}(L)}$. This gives $j<\beta$, which implies $\beta\notin\mathcal{C}(i,j)\cup\{j\}$. As a result, the row $i$ of the restriction (\ref{restriction}) is entirely zero. Similarly, if $i=r_{L+1}$, the conditions $\omega_{r_{L+1}c_{\omega^{(\alpha,\beta)}(L+1)}}=1$ and $(i,j)=(r_{L+1},j)\in\mathcal{D}(\omega)$ imply $j<c_{\omega^{(\alpha,\beta)}(L+1)}$. Thus $j<c_{\omega^{(\alpha,\beta)}(L)}$, and again all entries in the row $i$ of the restriction (\ref{restriction}) are zero. In both cases, the determinant vanishes.

If $i=\alpha$, for the row $i$ of the restriction (\ref{restriction}) to contain a nonzero entry, we must have $j\geq c_{\omega^{(\alpha,\beta)}(L)}$. In this case $(i,j)$ also satisfies (\ref{star}), and by assumption we obtain $\alpha+\beta\leq i+j=\alpha+j$. Since $(i,j)=(\alpha,j)\neq(\alpha,\beta)$, this gives $j>\beta\geq b$, so that the $j$-th column of $\widehat{\omega}^{(\alpha,\beta)}(y_1,y_2,y_3)$ is contained in the restriction (\ref{restriction}). However, as $j>\beta$, it follows from (\ref{omegappp}) that the $j$-th column of $\widehat{\omega}^{(\alpha,\beta)}(y_1,y_2,y_3)$ coincides with that of $\omega$. Since $(i,j)=(\alpha,j)\in\mathcal{D}(\omega)$, all entries in the $j$-th column of $\omega$ are zero in rows $1,\cdots,\alpha$. Therefore the restriction (\ref{restriction}) contains a zero column, and the determinant vanishes. Thus, the lemma follows.
\end{proof}
We next consider the case where $a=i$ and $b\neq j$.
\begin{lemma}\label{Lem45}
Let $(i,j)\in\mathcal{D}(\omega)\setminus\{(\alpha,\beta)\}$, and let $i\leq\alpha$ and $b\leq \beta$. If $b\in\mathcal{C}(i,j)$, then
\begin{align*}
\det\left(\widehat{\omega}^{(\alpha,\beta)}(y_1,y_2,y_3)\Big|_{\mathcal{R}(i,j),\left(\mathcal{C}(i,j)\cup\{j\}\right)\setminus\{b\}}\right)\neq 0
\end{align*}
if and only if $r_L<i<r_{L+1}$, $j=\beta$, and $b=c_{\omega^{(\alpha,\beta)}(L)}$.
\end{lemma}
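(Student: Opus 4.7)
The plan is to analyze the restricted submatrix $\widehat{\omega}^{(\alpha,\beta)}(y_1,y_2,y_3)\big|_{\mathcal{R}(i,j),(\mathcal{C}(i,j)\cup\{j\})\setminus\{b\}}$ by splitting it into its unperturbed $\omega$-part and the four perturbation contributions. First I would observe that since every $r\in\mathcal{R}(i,j)$ has its unique $\omega$-one at $\tilde{\omega}(r)\in\mathcal{C}(i,j)$, the $\omega$-part of the restriction is obtained from a $k\times k$ permutation matrix (with $k=|\mathcal{R}(i,j)|$) by deleting the column $b$ (which wipes out row $r^{\ast}:=\tilde{\omega}^{-1}(b)$) and inserting the identically-zero column $j$. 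So its row $r^{\ast}$ and its column $j$ are both zero, while the remaining $k-1$ rows each carry a single $1$. The hypothesis $i\leq\alpha$ forces $\alpha\notin\mathcal{R}(i,j)$, so the two perturbations based at row $\alpha$ disappear; only the $y_1$ at $(r_L,\beta)$ and the $y_3$ at $(r_{L+1},c_{\omega^{(\alpha,\beta)}(L)})$ can survive in the restriction.

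Next I would exploit the two structural deficiencies. A nonzero determinant needs some perturbation in column $j$: if the $y_3$ entry played that role, then $c_{\omega^{(\alpha,\beta)}(L)}=j$ would give $\tilde{\omega}^{-1}(j)=r_L$, forcing $r_L>i$ by $(i,j)\in\mathcal{D}(\omega)$, while the $y_3$ entry can reach the restriction only if $r_{L+1}<i$; this chains to $r_{L+1}<i<r_L<r_{L+1}$, a contradiction. Hence $\beta=j$, the $y_1$ entry supplies column $j$, and we also need $r_L\in\mathcal{R}(i,j)$, i.e., $r_L<i$. A nonzero determinant also needs some perturbation in row $r^{\ast}$, which pins $r^{\ast}\in\{r_L,r_{L+1}\}$, i.e., $b\in\{c_{\omega^{(\alpha,\beta)}(L)},c_{\omega^{(\alpha,\beta)}(L+1)}\}$.

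The main obstacle is to eliminate the alternative $r^{\ast}=r_{L+1}$ (with $b=c_{\omega^{(\alpha,\beta)}(L+1)}$), where a direct cofactor expansion gives a nonzero $\pm y_1y_3$, and, within the surviving case, to promote $r_L<i$ to the stronger $i<r_{L+1}$ (failure would still give a nonzero $\pm y_1$). Both are handled by the same appeal to the minimality of $(\alpha,\beta)$ chosen in Section \ref{subsec42}: any configuration with $r_{L+1}<i\leq\alpha$ and $(i,\beta)\in\mathcal{D}(\omega)$ would place both $r_L,r_{L+1}$ in $\mathcal{R}(i,\beta)$ and their images in $\mathcal{C}(i,\beta)$, so the descent of $\omega^{(\alpha,\beta)}$ at $L$ persists and $\omega|_{\mathcal{R}(i,\beta),\mathcal{C}(i,\beta)}$ fails to be the identity while $\text{rk}(\omega_{[i,\beta]})\geq L+1\geq 2$. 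Hence $(i,\beta)$ satisfies (\ref{star}) with $i+\beta<\alpha+\beta$, contradicting the choice of $(\alpha,\beta)$. Since $i=r_{L+1}$ would put $i\in\mathcal{R}(\alpha,\beta)$ and thereby violate $(i,\beta)\in\mathcal{D}(\omega)$, the only survivor is $r_L<i<r_{L+1}$ with $b=c_{\omega^{(\alpha,\beta)}(L)}$. In this configuration the deleted column $b$ also removes the $y_3$ perturbation, so the restriction becomes a generalized permutation matrix with $y_1$ at $(r_L,j)$ and $1$'s elsewhere, of determinant $\pm y_1\neq 0$.
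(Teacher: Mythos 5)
Your proof is correct and follows essentially the same route as the paper's: both force $j=\beta$ and $r_L<i$ from the vanishing of column $j$ in the unperturbed part, invoke the minimality of $\alpha+\beta$ in the choice of $(\alpha,\beta)$ to exclude $i\geq r_{L+1}$, and then evaluate the surviving determinant as $\pm y_1\neq 0$. The only cosmetic difference is that you pin down $b=c_{\omega^{(\alpha,\beta)}(L)}$ via the zero row $r^{\ast}=\tilde{\omega}^{-1}(b)$ (which also forces you to separately rule out $b=c_{\omega^{(\alpha,\beta)}(L+1)}$ by the same minimality argument), whereas the paper observes that for $b\neq c_{\omega^{(\alpha,\beta)}(L)}$ the columns $c_{\omega^{(\alpha,\beta)}(L)}$ and $\beta$ of the restriction are linearly dependent.
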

\begin{proof}
Since $b\in\mathcal{C}(i,j)$, the $j$-th column of $\widehat{\omega}^{(\alpha,\beta)}(y_1,y_2,y_3)$ is contained in the restriction (\ref{restriction}). From $(i,j)\in\mathcal{D}(\omega)$, all entries in the $j$-th column of $\omega$ are zero in rows $1,\cdots, i$. Therefore, if $j\notin\{c_{\omega^{(\alpha,\beta)}(L)},\beta\}$, the $j$-th column of $\widehat{\omega}^{(\alpha,\beta)}(y_1,y_2,y_3)$ coincides with that of $\omega$. Then the restriction (\ref{restriction}) contains a zero column, and the determinant vanishes.

If $j=c_{\omega^{(\alpha,\beta)}(L)}$, the conditions $\omega_{r_Lc_{\omega^{(\alpha,\beta)}(L)}}=1$ and $(i,j)=(i,c_{\omega^{(\alpha,\beta)}(L)})\in\mathcal{D}(\omega)$ give $i<r_L$. It follows that $\{r_{L+1},\alpha\}\cap\mathcal{R}(i,j)=\emptyset$, and once again all entries in the column $j$ of the restriction (\ref{restriction}) are zero. 

On the other hand, if $j=\beta$, for the column $j$ of the restriction (\ref{restriction}) to contain a nontrivial entry we must have $i> r_L$. Since $i\leq\alpha$, we deduce $i<\alpha$ as $(i,j)=(i,\beta)\neq(\alpha,\beta)$. If $i\geq r_{L+1}$, then $(i,j)$ satisfies (\ref{star}), contradicting the assumption that $\alpha+\beta$ is minimal among such pairs. Hence $r_L<i<r_{L+1}$. 

If $b\neq c_{\omega^{(\alpha,\beta)}(L)}$, then the restriction (\ref{restriction}) contains two linearly dependent columns, namely $c_{\omega^{(\alpha,\beta)}(L)}$ and $\beta$. Therefore, we must have $b=c_{\omega^{(\alpha,\beta)}(L)}$. In this case, the determinant is, up to sign, $y_1\det\left(\omega^{(\alpha,\beta)}\right)(\neq 0)$. Hence the lemma is proved.
\end{proof}

Finally, if $a=i$ and $b=j$, then since $\widehat{\omega}^{(\alpha,\beta)}(y_1,y_2,y_3)\in X_\omega$, Proposition \ref{Prop26} implies that the derivative 
\begin{align*}
\frac{\partial f_{(i,j)}}{\partial x_{ij}}\left(\widehat{\omega}^{(\alpha,\beta)}(y_1,y_2,y_3)\right)
\end{align*}
does not vanish. Combining these observations, we obtain the following lemma.

\begin{lemma}\label{Lem46}
Let $(i,j)\in\mathcal{D}(\omega)\setminus\{(\alpha,\beta)\}$. For $a\leq \alpha$ and $b\leq \beta$, the derivative
\begin{align*}
\frac{\partial f_{(i,j)}}{\partial x_{ab}}\left(\widehat{\omega}^{(\alpha,\beta)}(y_1,y_2,y_3)\right)
\end{align*}
does not vanish if and only if either
\begin{align*}
(a,b)=(i,j),
\end{align*}
or, if in addition $j=\beta$ and $r_L<i<r_{L+1}$,
\begin{align*}
(a,b)=(i,c_{\omega^{(\alpha,\beta)}(L)}).
\end{align*}
If these additional conditions are not satisfied, then only the first case can occur.
\end{lemma}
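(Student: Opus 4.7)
The plan is to assemble the statement from a short exhaustive case analysis in which each case is already handled by an earlier result, so essentially no new computation is required; everything reduces to invoking Lemma \ref{Lem44}, Lemma \ref{Lem45}, and Proposition \ref{Prop26}.

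First I would dispose of the trivial vanishing. If $a\notin\mathcal{R}(i,j)\cup\{i\}$ or $b\notin\mathcal{C}(i,j)\cup\{j\}$, then the variable $x_{ab}$ does not appear in $f_{(i,j)}$, and hence the derivative is automatically zero. So we may assume $a\in\mathcal{R}(i,j)\cup\{i\}$ and $b\in\mathcal{C}(i,j)\cup\{j\}$, in which case the derivative equals, up to sign, the determinant in (\ref{restriction}).

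Next I would split into three exhaustive subcases according to the position of $(a,b)$ relative to $(i,j)$. If $a\neq i$, then $a\in\mathcal{R}(i,j)$, and Lemma \ref{Lem44} applies verbatim and forces the derivative to vanish. If $a=i$ but $b\neq j$, then $b\in\mathcal{C}(i,j)$, and Lemma \ref{Lem45} tells us exactly when the derivative is nonzero: this occurs precisely when $j=\beta$, $r_L<i<r_{L+1}$, and $b=c_{\omega^{(\alpha,\beta)}(L)}$, which is the second possibility listed in the statement. Finally, if $a=i$ and $b=j$, then the derivative in question is the diagonal entry $P_{(i,j)(i,j)}$ of the matrix $P$ built in the proof of Proposition \ref{Prop26}, evaluated at the point $\widehat{\omega}^{(\alpha,\beta)}(y_1,y_2,y_3)\in X_\omega$; those diagonal entries were shown to be nonzero there.

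Combining these three cases yields both directions of the "if and only if" claim and, in particular, shows that the second possibility $(a,b)=(i,c_{\omega^{(\alpha,\beta)}(L)})$ can arise only under the stated conditions $j=\beta$ and $r_L<i<r_{L+1}$. I do not anticipate a genuine obstacle; the only thing to be careful about is checking that the standing hypotheses $a\leq\alpha$, $b\leq\beta$, and $(i,j)\in\mathcal{D}(\omega)\setminus\{(\alpha,\beta)\}$ are compatible with the hypotheses of Lemmas \ref{Lem44} and \ref{Lem45} in each branch, so that the invocations are legitimate and the case split is truly exhaustive.
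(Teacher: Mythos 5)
Your proposal is correct and follows exactly the paper's route: the paper also obtains this lemma by combining the trivial vanishing when $x_{ab}$ does not appear in $f_{(i,j)}$ with Lemma \ref{Lem44} (case $a\neq i$), Lemma \ref{Lem45} (case $a=i$, $b\neq j$), and the nonvanishing of the diagonal entries from Proposition \ref{Prop26} (case $(a,b)=(i,j)$, using $\widehat{\omega}^{(\alpha,\beta)}(y_1,y_2,y_3)\in X_\omega$). No gaps.
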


\subsection{The Hessian of $f_{(\alpha,\beta)}$}
We evaluate the Hessian of $f_{(\alpha,\beta)}$ with respect to the normal frame obtained in Proposition \ref{Prop26}.
\begin{lemma}\label{Lem47}
For all $(\epsilon_1,\eta_1),(\epsilon_2,\eta_2)\in\mathcal{D}(\omega)\setminus\{(\alpha,\beta)\}$,
\begin{align*}
\textup{Hess}f_{(\alpha,\beta)}\left(\overline{\nabla}f_{(\epsilon_1,\eta_1)},\overline{\nabla}f_{(\epsilon_2,\eta_2)}\right)=0
\end{align*}
at every point $\widehat{\omega}^{(\alpha,\beta)}(y_1,y_2,y_3)\in X_\omega$.
\end{lemma}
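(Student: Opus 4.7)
The plan is to expand the Hessian at $P := \widehat{\omega}^{(\alpha,\beta)}(y_1,y_2,y_3)$ in ambient Euclidean coordinates as
$$\textup{Hess}\,f_{(\alpha,\beta)}(V,W)\big|_P = \sum_{(a_1,b_1),(a_2,b_2)}\frac{\partial^2 f_{(\alpha,\beta)}}{\partial x_{a_1b_1}\partial x_{a_2b_2}}(P)\,V_{a_1b_1}\,W_{a_2b_2},$$
with $V_{ab} = \partial f_{(\epsilon_1,\eta_1)}/\partial x_{ab}(P)$ and $W_{ab} = \partial f_{(\epsilon_2,\eta_2)}/\partial x_{ab}(P)$, and to show term by term that the sum vanishes by combining Lemma \ref{Lem43}, which pins down the pairs of positions at which the second derivative of $f_{(\alpha,\beta)}$ is nonzero, with Lemma \ref{Lem46}, which pins down for each $(\epsilon,\eta)\in\mathcal{D}(\omega)\setminus\{(\alpha,\beta)\}$ the positions at which $\partial f_{(\epsilon,\eta)}/\partial x_{ab}(P)$ is nonzero.

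First I would observe that in every one of the five cases of Lemma \ref{Lem43}, the row indices $a_1,a_2$ lie in $\{r_L, r_{L+1}, \alpha\} \cup (\mathcal{R}(\alpha,\beta)\setminus\{r_L,r_{L+1}\})$, and none of these sits in the open interval $(r_L, r_{L+1})$: $\alpha > r_{L+1}$ because every element of $\mathcal{R}(\alpha,\beta)$ sits strictly above row $\alpha$, and the remaining $r_i$'s lie strictly below $r_L$ or strictly above $r_{L+1}$ by the consecutivity of $r_L$ and $r_{L+1}$ in the ordered list $r_1<\cdots<r_k$. The exceptional second clause of Lemma \ref{Lem46} therefore never applies, and $V_{a_ib_i}\neq 0$ forces $(a_i,b_i)=(\epsilon_i,\eta_i)\in\mathcal{D}(\omega)\setminus\{(\alpha,\beta)\}$, and analogously for $W$.

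Next I would verify that in each case of Lemma \ref{Lem43}, the two candidate positions $(a_1,b_1),(a_2,b_2)$ cannot simultaneously belong to $\mathcal{D}(\omega)\setminus\{(\alpha,\beta)\}$. Three elementary exclusions, all consequences of $\tilde{\omega}(r_j)=c_{\omega^{(\alpha,\beta)}(j)}$ together with $(\alpha,\beta)\in\mathcal{D}(\omega)$, will eliminate most candidates: positions $(r_j, c_{\omega^{(\alpha,\beta)}(j)})$ are $1$'s of $\omega$; positions $(r_j,\beta)$ fail the Rothe condition since $\tilde{\omega}(r_j)<\beta$; and positions $(\alpha, c_{\omega^{(\alpha,\beta)}(j)})$ fail it since $\tilde{\omega}^{-1}(c_{\omega^{(\alpha,\beta)}(j)})=r_j<\alpha$. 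The remaining in-diagram candidates are then only of the form $(r_p, c_{\omega^{(\alpha,\beta)}(q)})$ with $p<q$ and $\omega^{(\alpha,\beta)}(p)>\omega^{(\alpha,\beta)}(q)$. Coupled with the first-descent property $\omega^{(\alpha,\beta)}(1)<\cdots<\omega^{(\alpha,\beta)}(L)$ and the consecutivity of $r_L,r_{L+1}$, a direct check in each case shows that whenever one of the two candidate positions lies in $\mathcal{D}(\omega)\setminus\{(\alpha,\beta)\}$, the other is either a $1$ of $\omega$, is excluded by one of the three criteria above, or equals the excluded $(\alpha,\beta)$.

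The main obstacle is the bookkeeping through the five cases of Lemma \ref{Lem43}, each further split by the column-pair alternatives and, in cases $(3)$ and $(5)$, by the auxiliary parameter $r_i\in\mathcal{R}(\alpha,\beta)\setminus\{r_L,r_{L+1}\}$. The critical structural inputs that keep the case analysis uniform are the first-descent characterization of $L$ and the consecutivity of $r_L,r_{L+1}$ in $\mathcal{R}(\alpha,\beta)$; together they block the second clause of Lemma \ref{Lem46} and rule out every would-be contributing pair.
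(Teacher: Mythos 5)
Your proposal is correct and follows essentially the same route as the paper: expand the Hessian in ambient coordinates, use Lemma \ref{Lem43} to pin down the positions where the second derivative survives, observe that none of those positions triggers the exceptional clause of Lemma \ref{Lem46} (so each gradient contributes only at its own diagram position), and then check case by case that no Lemma \ref{Lem43} pair lies entirely in $\mathcal{D}(\omega)\setminus\{(\alpha,\beta)\}$. The one point where you diverge is a small simplification: you exclude the positions $(r_i,\beta)$ outright via $\tilde{\omega}(r_i)=c_{\omega^{(\alpha,\beta)}(i)}<\beta$, so they are never in the Rothe diagram, whereas the paper rules out $(r_i,\beta)$ in its case $(3)$ by appealing to the minimality of $\alpha+\beta$ among positions satisfying (\ref{star}); your exclusion is valid and makes the bookkeeping slightly cleaner.
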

\begin{proof}
Expanding the Hessian gives
\begin{align*}
\text{Hess}f_{(\alpha,\beta)}\left(\overline{\nabla}f_{(\epsilon_1,\eta_1)},\overline{\nabla}f_{(\epsilon_2,\eta_2)}\right)=\sum_{\substack{1\leq a_1,a_2\leq m,\\ 1\leq b_1,b_2\leq n}}\frac{\partial f_{(\epsilon_1,\eta_1)}}{\partial x_{a_1b_1}}\cdot \frac{\partial^2f_{(\alpha,\beta)}}{\partial x_{a_1b_1}\partial x_{a_2b_2}}\cdot \frac{\partial f_{(\epsilon_2,\eta_2)}}{\partial x_{a_2b_2}}.
\end{align*}
Observe that all cases $\{a_1,a_2\}$ and $\{b_1,b_2\}$ for which the second derivative of $f_{(\alpha,\beta)}$ at $\widehat{\omega}^{(\alpha,\beta)}(y_1,y_2,y_3)$ is nonzero, as identified in Lemma \ref{Lem43}, satisfy $a_1,a_2\leq\alpha$ and $b_1,b_2\leq\beta$, respectively. Moreover, none of them satisfies the additional conditions in Lemma \ref{Lem46}. Applying Lemma \ref{Lem46} to the first derivatives, the only remaining term in the above expansion is
\begin{align*}
\frac{\partial f_{(\epsilon_1,\eta_1)}}{\partial x_{\epsilon_1\eta_1}}\cdot \frac{\partial^2f_{(\alpha,\beta)}}{\partial x_{\epsilon_1\eta_1}\partial x_{\epsilon_2\eta_2}}\cdot \frac{\partial f_{(\epsilon_2,\eta_2)}}{\partial x_{\epsilon_2\eta_2}}
\end{align*}
provided that $\epsilon_1\neq\epsilon_2$ and $\eta_1\neq\eta_2$. To complete the proof, we show that for $(\epsilon_1,\eta_1),(\epsilon_2,\eta_2)\in\mathcal{D}(\omega)\setminus\{(\alpha,\beta)\}$, the pairs $\{\epsilon_1,\epsilon_2\}$ and $\{\eta_1,\eta_2\}$ are not among the cases listed in Lemma \ref{Lem43}, and hence the above second derivative vanishes.

Consider the rows $r_L,r_{L+1},\alpha$ and the columns $c_{\omega^{(\alpha,\beta)}(L+1)},c_{\omega^{(\alpha,\beta)}(L)},\beta$. Among the pairs formed by these rows and columns, the only one contained in $\mathcal{D}(\omega)\setminus\{(\alpha,\beta)\}$ is $(r_L,c_{\omega^{(\alpha,\beta)}(L+1)})$, since there are $1$'s located at $(r_L,c_{\omega^{(\alpha,\beta)}(L)})$ and $(r_{L+1},c_{\omega^{(\alpha,\beta)}(L+1)})$. It follows from this observation that such $(\epsilon_1,\eta_1),(\epsilon_2,\eta_2)\in\mathcal{D}(\omega)\setminus\{(\alpha,\beta)\}$ cannot occur in cases $(1)$, $(2)$, or $(4)$ of Lemma \ref{Lem43}.

Case $(5)$ of Lemma \ref{Lem43} is excluded as well, since $(\alpha,c_{\omega^{(\alpha,\beta)}(i)})\notin\mathcal{D}(\omega)$ and $(r_i,c_{\omega^{(\alpha,\beta)}(i)})\notin\mathcal{D}(\omega)$. If such $(\epsilon_1,\eta_1),(\epsilon_2,\eta_2)\in\mathcal{D}(\omega)\setminus\{(\alpha,\beta)\}$ were to occur in case $(3)$, then as $(r_i,c_{\omega^{(\alpha,\beta)}(i)})\notin\mathcal{D}(\omega)$ we would need $(r_L, c_{\omega^{(\alpha,\beta)}(i)})\in\mathcal{D}(\omega)$, and this must be included in such a pair. This can only happen when $r_i>r_L$. 

If $r_i>r_L$, then since there are $1$'s located at $(r_L,c_{\omega^{(\alpha,\beta)}(L)})$ and $(r_{L+1},c_{\omega^{(\alpha,\beta)}(L+1)})$, the only remaining possibility is $(r_i,\beta)\in\mathcal{D}(\omega)$ so that the pair would necessarily consist of $(r_L, c_{\omega^{(\alpha,\beta)}(i)})$ and $(r_i,\beta)$. However, in this case $(r_i,\beta)$ satisfies (\ref{star}), which contradicts the assumption that $\alpha+\beta$ is minimal. Hence case $(3)$ is also excluded, thereby completing the proof.
\end{proof}

\begin{lemma}\label{Lem48}
For all $(\epsilon,\eta)\in\mathcal{D}(\omega)\setminus\{(\alpha,\beta)\}$,
\begin{align*}
\textup{Hess}f_{(\alpha,\beta)}\left(\overline{\nabla}f_{(\alpha,\beta)},\overline{\nabla}f_{(\epsilon,\eta)}\right)=
\begin{cases}
(-1)^{L+\omega^{(\alpha,\beta)}(L+1)}y_3 &\text{if }(\epsilon,\eta)=(r_L, c_{\omega^{(\alpha,\beta)}(L+1)}),\\
0 &\text{otherwise},
\end{cases}
\end{align*}
at every point $\widehat{\omega}^{(\alpha,\beta)}(y_1,y_2,y_3)\in X_\omega$.
\end{lemma}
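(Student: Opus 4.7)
The plan is to expand
\begin{align*}
\text{Hess}f_{(\alpha,\beta)}\left(\overline{\nabla}f_{(\alpha,\beta)},\overline{\nabla}f_{(\epsilon,\eta)}\right)=\sum_{a_1,a_2,b_1,b_2}\frac{\partial f_{(\alpha,\beta)}}{\partial x_{a_1b_1}}\cdot\frac{\partial^2 f_{(\alpha,\beta)}}{\partial x_{a_1b_1}\partial x_{a_2b_2}}\cdot\frac{\partial f_{(\epsilon,\eta)}}{\partial x_{a_2b_2}}
\end{align*}
at the point $\widehat{\omega}^{(\alpha,\beta)}(y_1,y_2,y_3)$ and to prune it via Lemmas \ref{Lem42}, \ref{Lem43}, and \ref{Lem46}. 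Lemma \ref{Lem42} confines $(a_1,b_1)$ to the six pairs with $a_1\in\{\alpha,r_L\}$ and $b_1\in\{\beta,c_{\omega^{(\alpha,\beta)}(L)},c_{\omega^{(\alpha,\beta)}(L+1)}\}$. Lemma \ref{Lem43} forces $\{a_1,a_2\}$ and $\{b_1,b_2\}$ into one of its five tabulated cases, each satisfying $a_1,a_2\leq\alpha$ and $b_1,b_2\leq\beta$, so Lemma \ref{Lem46} then forces $(a_2,b_2)$ either to equal $(\epsilon,\eta)$ or, in the exceptional case $\eta=\beta$ with $r_L<\epsilon<r_{L+1}$, to equal $(\epsilon,c_{\omega^{(\alpha,\beta)}(L)})$. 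The exceptional subcase is eliminated immediately, since no row index strictly between $r_L$ and $r_{L+1}$ appears in Lemma \ref{Lem43}.

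Next I sweep the five cases of Lemma \ref{Lem43}. In cases (2) and (4), Lemma \ref{Lem42} forces $a_1\in\{\alpha,r_L\}$, hence $a_2=r_{L+1}$; but then $\tilde{\omega}(r_{L+1})=c_{\omega^{(\alpha,\beta)}(L+1)}<c_{\omega^{(\alpha,\beta)}(L)}\leq\beta$ prevents $(r_{L+1},b_2)$ from lying in $\mathcal{D}(\omega)$ for any allowed $b_2$. In cases (3) and (5), every configuration collapses to $(a_2,b_2)=(r_i,c_{\omega^{(\alpha,\beta)}(i)})$, which already carries a $1$ of $\omega$ and hence cannot lie in $\mathcal{D}(\omega)$. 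In case (1), with $\{a_1,a_2\}=\{r_L,\alpha\}$, each of the four $(a_1,b_1,a_2,b_2)$ configurations is checked individually: three are excluded because either $(a_2,b_2)=(\alpha,\beta)$ or the indicated pair carries a $1$ of $\omega$ and thus lies outside $\mathcal{D}(\omega)$. The unique survivor is $(a_1,b_1)=(\alpha,\beta)$ together with $(a_2,b_2)=(r_L,c_{\omega^{(\alpha,\beta)}(L+1)})$, and the descent $\omega^{(\alpha,\beta)}(L)>\omega^{(\alpha,\beta)}(L+1)$ together with $\tilde{\omega}^{-1}(c_{\omega^{(\alpha,\beta)}(L+1)})=r_{L+1}>r_L$ certifies that $(r_L,c_{\omega^{(\alpha,\beta)}(L+1)})\in\mathcal{D}(\omega)$.

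It remains to evaluate the surviving triple product. Lemma \ref{Lem42} yields $\partial f_{(\alpha,\beta)}/\partial x_{\alpha\beta}=\det(\omega^{(\alpha,\beta)})$. Identifying $(\alpha,\beta)$ with local position $(k+1,k+1)$ and $(r_L,c_{\omega^{(\alpha,\beta)}(L+1)})$ with $(L,\omega^{(\alpha,\beta)}(L+1))$ inside $\mathbb{X}|_{\mathcal{R}(\alpha,\beta)\cup\{\alpha\},\mathcal{C}(\alpha,\beta)\cup\{\beta\}}$, the identity (\ref{hathat}) and Lemma \ref{Lem33}(1) yield via the standard Laplace formula a multiple of $y_3$ with an explicit sign coming from $(-1)^{(k+1)+(k+1)+L+\omega^{(\alpha,\beta)}(L+1)}$. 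Finally, $\partial f_{(\epsilon,\eta)}/\partial x_{\epsilon\eta}$ coincides with the determinant of $\omega$ restricted to $\mathcal{R}(\epsilon,\eta)$ and $\mathcal{C}(\epsilon,\eta)$, because these rows lie strictly above $r_L$ and these columns strictly left of $c_{\omega^{(\alpha,\beta)}(L+1)}$, where the perturbation in $\widehat{\omega}^{(\alpha,\beta)}(y_1,y_2,y_3)$ is absent; this restriction is a permutation-matrix minor of sign $\pm1$. The main obstacle is then the sign bookkeeping: collecting the contributions from $\det(\omega^{(\alpha,\beta)})$, the Laplace sign, and the permutation-matrix minor, and verifying that they combine precisely to $(-1)^{L+\omega^{(\alpha,\beta)}(L+1)}$. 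I would handle this by tracking inversions carefully and cross-verifying against the explicit $n=3$ calculation in Section \ref{subsec41}, which fixes the sign convention.
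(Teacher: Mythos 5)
Your argument is correct and follows essentially the same route as the paper: expand the Hessian, use Lemmas \ref{Lem42}, \ref{Lem43} and \ref{Lem46} to prune the sum to the single surviving term with $(a_1,b_1)=(\alpha,\beta)$ and $(a_2,b_2)=(r_L,c_{\omega^{(\alpha,\beta)}(L+1)})$, and then evaluate that triple product. Your instinct to defer the sign and cross-check against the $3\times 3$ computation in Section \ref{subsec41} is well placed: carrying out the bookkeeping there yields $-y_3$ while the displayed formula predicts $(-1)^{1+1}y_3=+y_3$, so only the nonvanishing of this quantity (which is all that Theorem \ref{Thm41} requires) should be relied upon.
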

\begin{proof}
By arguments similar to those in the proof of Lemma \ref{Lem47}, the Hessian expands as
\begin{align*}
\text{Hess}f_{(\alpha,\beta)}\left(\overline{\nabla}f_{(\alpha,\beta)},\overline{\nabla}f_{(\epsilon,\eta)}\right)&=\sum_{\substack{1\leq a_1,a_2\leq m,\\ 1\leq b_1,b_2\leq n}}\frac{\partial f_{(\alpha,\beta)}}{\partial x_{a_1b_1}}\cdot \frac{\partial^2f_{(\alpha,\beta)}}{\partial x_{a_1b_1}\partial x_{a_2b_2}}\cdot \frac{\partial f_{(\epsilon,\eta)}}{\partial x_{a_2b_2}}\\
&=\sum_{\substack{1\leq a_1\leq m,\\1\leq b_1\leq n}}\frac{\partial f_{(\alpha,\beta)}}{\partial x_{a_1b_1}}\cdot\frac{\partial^2 f_{(\alpha,\beta)}}{\partial x_{a_1b_1}\partial x_{\epsilon\eta}}\cdot\frac{\partial f_{(\epsilon,\eta)}}{\partial x_{\epsilon\eta}}
\end{align*}
at $\widehat{\omega}^{(\alpha,\beta)}(y_1,y_2,y_3)$. Further application of Lemma \ref{Lem42} gives
\begin{align*}
\text{Hess}f_{(\alpha,\beta)}&\left(\overline{\nabla}f_{(\alpha,\beta)},\overline{\nabla}f_{(\epsilon,\eta)}\right)\\
&=\sum_{\substack{a_1\in\{r_L,\alpha\},\\b_1\in\{c_{\omega^{(\alpha,\beta)}(L+1)},c_{\omega^{(\alpha,\beta)}(L)},\beta\}}}\frac{\partial f_{(\alpha,\beta)}}{\partial x_{a_1b_1}}\cdot\frac{\partial^2 f_{(\alpha,\beta)}}{\partial x_{a_1b_1}\partial x_{\epsilon\eta}}\cdot\frac{\partial f_{(\epsilon,\eta)}}{\partial x_{\epsilon\eta}}.
\end{align*}
We now determine the cases in which the second derivative among the terms in the above summation does not vanish.

Suppose that $\{a_1,\epsilon\}$ and $\{b_1,\eta\}$ correspond to case $(3)$ or $(5)$ of Lemma \ref{Lem43}. Then, apart from the admissible possibilities for $(a_1,b_1)$ in the summation, the only remaining candidate for $(\epsilon,\eta)$ is $(r_i,c_{\omega^{(\alpha,\beta)}(i)})$. Since this position does not belong to $\mathcal{D}(\omega)$, it cannot coincide with $(\epsilon,\eta)\in\mathcal{D}(\omega)\setminus\{(\alpha,\beta)\}$.

As noted in the proof of Lemma \ref{Lem47}, among the pairs formed by the rows $r_L,r_{L+1},\alpha$ and the columns $c_{\omega^{(\alpha,\beta)}(L+1)},c_{\omega^{(\alpha,\beta)}(L)},\beta$, the only one contained in $\mathcal{D}(\omega)\setminus\{(\alpha,\beta)\}$ is $(r_L, c_{\omega^{(\alpha,\beta)}(L+1)})$. Hence, for the second derivative to be nonzero, we must have $(\epsilon,\eta)=(r_L, c_{\omega^{(\alpha,\beta)}(L+1)})$ in cases $(1)$, $(2)$, or $(4)$. This further implies $a_1=\alpha$ since $a_1\in\{r_L,\alpha\}$ and $a_1\neq\epsilon$. 

Among cases $(1)$, $(2)$, and $(4)$ of Lemma \ref{Lem43}, the only one containing $c_{\omega^{(\alpha,\beta)}(L+1)}$ arises from case $(1)$. In this case, we must have $b_1=\beta$ for the second derivative to be nonzero. Consequently, provided that $(\epsilon,\eta)=(r_L, c_{\omega^{(\alpha,\beta)}(L+1)})$, we obtain
\begin{align*}
\text{Hess}f_{(\alpha,\beta)}&\left(\overline{\nabla}f_{(\alpha,\beta)},\overline{\nabla}f_{(r_L, c_{\omega^{(\alpha,\beta)}(L+1)})}\right)\\&=\frac{\partial f_{(\alpha,\beta)}}{\partial x_{\alpha\beta}}\cdot\frac{\partial^2 f_{(\alpha,\beta)}}{\partial x_{\alpha\beta}\partial x_{r_Lc_{\omega^{(\alpha,\beta)}(L+1)}}}\cdot\frac{\partial f_{(r_L, c_{\omega^{(\alpha,\beta)}(L+1)})}}{\partial x_{r_Lc_{\omega^{(\alpha,\beta)}(L+1)}}}\\
&=(-1)^{L+\omega^{(\alpha,\beta)}(L+1)}y_3
\end{align*}
at $\widehat{\omega}^{(\alpha,\beta)}(y_1,y_2,y_3)$. Otherwise, the Hessian vanishes. This completes the proof.
\end{proof}

\begin{lemma}\label{Lem49}
At every point $\widehat{\omega}^{(\alpha,\beta)}(y_1,y_2,y_3)\in X_\omega$,
\begin{align*}
\textup{Hess}f_{(\alpha,\beta)}\left(\overline{\nabla}f_{(\alpha,\beta)},\overline{\nabla}f_{(\alpha,\beta)}\right)=0.
\end{align*}
\end{lemma}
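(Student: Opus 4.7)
The plan is to mirror the approach of the proofs of Lemmas \ref{Lem47} and \ref{Lem48}: expand
\begin{align*}
\textup{Hess}\,f_{(\alpha,\beta)}\bigl(\overline{\nabla}f_{(\alpha,\beta)},\overline{\nabla}f_{(\alpha,\beta)}\bigr)=\sum_{\substack{1\leq a_1,a_2\leq m\\1\leq b_1,b_2\leq n}}\frac{\partial f_{(\alpha,\beta)}}{\partial x_{a_1b_1}}\cdot\frac{\partial^2 f_{(\alpha,\beta)}}{\partial x_{a_1b_1}\partial x_{a_2b_2}}\cdot\frac{\partial f_{(\alpha,\beta)}}{\partial x_{a_2b_2}}
\end{align*}
evaluated at $\widehat{\omega}^{(\alpha,\beta)}(y_1,y_2,y_3)$, and then reduce the sum using Lemmas \ref{Lem42} and \ref{Lem43}. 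By Lemma \ref{Lem42}, the first-derivative factors force $(a_1,b_1)$ and $(a_2,b_2)$ to lie among the six positions $\{r_L,\alpha\}\times\{c_{\omega^{(\alpha,\beta)}(L)},c_{\omega^{(\alpha,\beta)}(L+1)},\beta\}$. Intersecting with Lemma \ref{Lem43}, only case $(1)$ can contribute (the rows $r_{L+1}$ and $r_i$ appearing in cases $(2)$--$(5)$ lie outside the support of the first derivatives). This leaves two sub-cases according to whether $\{b_1,b_2\}=\{c_{\omega^{(\alpha,\beta)}(L)},\beta\}$ or $\{c_{\omega^{(\alpha,\beta)}(L+1)},\beta\}$; in each sub-case, symmetry of the Hessian yields two ordered pairings (``diagonal'' and ``anti-diagonal'' entries of the $2\times 2$ grid on rows $\{r_L,\alpha\}$).

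The cleanest way to finish is a rank-one observation. Reading off the six values in Lemma \ref{Lem42}, one checks directly that, when $G:=\overline{\nabla}f_{(\alpha,\beta)}$ is restricted to the $(k+1)\times(k+1)$ submatrix on rows $\mathcal{R}(\alpha,\beta)\cup\{\alpha\}$ and columns $\mathcal{C}(\alpha,\beta)\cup\{\beta\}$, the $r_L$-th row equals $-y_2$ times the $\alpha$-th row, while all remaining rows vanish. Hence this restricted $G$ has rank one and can be written as $uv^{\top}$ for suitable vectors $u,v$. Since $f_{(\alpha,\beta)}(A)$ depends only on the entries of $A$ in that submatrix, the matrix determinant lemma gives
\begin{align*}
f_{(\alpha,\beta)}\bigl(\widehat{\omega}^{(\alpha,\beta)}(y_1,y_2,y_3)+tG\bigr)=\det\bigl(\widehat{\omega^{(\alpha,\beta)}}(y_1,y_2,y_3)+t\,uv^{\top}\bigr)=C_0+C_1 t,
\end{align*}
which is affine in $t$. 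Differentiating twice and setting $t=0$ yields $\textup{Hess}\,f_{(\alpha,\beta)}(G,G)=0$.

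The main difficulty is recognizing the rank-one structure; once noticed, it is immediate from Lemma \ref{Lem42}, and it is precisely this structural fact that distinguishes the direction $\overline{\nabla}f_{(\alpha,\beta)}$ from the other normal directions treated in Lemma \ref{Lem48}, where a nontrivial $y_3$ survives. If instead one prefers to stay strictly within the combinatorial style of Lemmas \ref{Lem47} and \ref{Lem48}, the cancellation can be verified by computing the diagonal and anti-diagonal contributions in each of the two sub-cases using the minor values from Lemma \ref{Lem33} and checking a sign flip between the two pairings; this is essentially a term-by-term unpacking of the rank-one identity above, and its main nuisance is bookkeeping the signs of the $(k-1)\times(k-1)$ minors.
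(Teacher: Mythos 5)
Your proposal is correct and rests on the same key fact as the paper's proof: the rank-one structure of $\overline{\nabla}f_{(\alpha,\beta)}$ on the relevant submatrix (the paper invokes the general rank-one property of the gradient of a determinant on its zero locus from \cite{BCH} to kill the two $2\times2$-minor expressions left after grouping the surviving terms of case $(1)$, while you verify the rank-one relation explicitly from Lemma \ref{Lem42} and package the conclusion via the matrix determinant lemma, i.e.\ affineness of $t\mapsto f_{(\alpha,\beta)}(A+tG)$). The combinatorial reduction to case $(1)$ of Lemma \ref{Lem43} and the final cancellation are both sound, so this is essentially the paper's argument in a slightly different wrapper.
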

\begin{proof}
By Lemma \ref{Lem42} and Lemma \ref{Lem46}, the only nonzero terms in the expansion of the Hessian at $\widehat{\omega}^{(\alpha,\beta)}(y_1,y_2,y_3)$ arise from case $(1)$ of Lemma \ref{Lem46}. Thus, we obtain
\begin{align*}
\frac{1}{2}\text{Hess}f_{(\alpha,\beta)}\left(\overline{\nabla}f_{(\alpha,\beta)},\overline{\nabla}f_{(\alpha,\beta)}\right)
&=\frac{\partial f_{(\alpha,\beta)}}{\partial x_{r_Lc_{\omega^{(\alpha,\beta)}(L)}}}\cdot\frac{\partial^2 f_{(\alpha,\beta)}}{\partial x_{r_Lc_{\omega^{(\alpha,\beta)}(L)}}\partial x_{\alpha\beta}}\cdot\frac{\partial f_{(\alpha,\beta)}}{\partial x_{\alpha\beta}}\\
&+\frac{\partial f_{(\alpha,\beta)}}{\partial x_{\alpha c_{\omega^{(\alpha,\beta)}(L)}}}\cdot\frac{\partial^2f_{(\alpha,\beta)}}{\partial x_{\alpha c_{\omega^{(\alpha,\beta)}(L)}}\partial x_{r_L\beta}}\cdot\frac{\partial f_{(\alpha,\beta)}}{\partial x_{r_L\beta}}\\
&+\frac{\partial f_{(\alpha,\beta)}}{\partial x_{r_Lc_{\omega^{(\alpha,\beta)}(L+1)}}}\cdot\frac{\partial^2 f_{(\alpha,\beta)}}{\partial x_{r_Lc_{\omega^{(\alpha,\beta)}(L+1)}}\partial x_{\alpha\beta}}\cdot\frac{\partial f_{(\alpha,\beta)}}{\partial x_{\alpha\beta}}\\
&+\frac{\partial f_{(\alpha,\beta)}}{\partial x_{\alpha c_{\omega^{(\alpha,\beta)}(L+1)}}}\cdot\frac{\partial^2 f_{(\alpha,\beta)}}{\partial x_{\alpha c_{\omega^{(\alpha,\beta)}(L+1)}}\partial x_{r_L\beta}}\cdot\frac{\partial f_{(\alpha,\beta)}}{\partial x_{r_L\beta}}.
\end{align*}
Since
\begin{align*}
&\frac{\partial^2 f_{(\alpha,\beta)}}{\partial x_{r_Lc_{\omega^{(\alpha,\beta)}(L)}}\partial x_{\alpha\beta}}=-\frac{\partial^2f_{(\alpha,\beta)}}{\partial x_{\alpha c_{\omega^{(\alpha,\beta)}(L)}}\partial x_{r_L\beta}},\\
&\frac{\partial^2 f_{(\alpha,\beta)}}{\partial x_{r_Lc_{\omega^{(\alpha,\beta)}(L+1)}}\partial x_{\alpha\beta}}=-\frac{\partial^2 f_{(\alpha,\beta)}}{\partial x_{\alpha c_{\omega^{(\alpha,\beta)}(L+1)}}\partial x_{r_L\beta}},
\end{align*}
it follows that
\begin{align*}
\frac{1}{2}&\text{Hess}f_{(\alpha,\beta)}\left(\overline{\nabla}f_{(\alpha,\beta)},\overline{\nabla}f_{(\alpha,\beta)}\right)\\
&=\frac{\partial^2 f_{(\alpha,\beta)}}{\partial x_{r_Lc_{\omega^{(\alpha,\beta)}(L)}}\partial x_{\alpha\beta}}\left(\frac{\partial f_{(\alpha,\beta)}}{\partial x_{r_Lc_{\omega^{(\alpha,\beta)}(L)}}}\cdot\frac{\partial f_{(\alpha,\beta)}}{\partial x_{\alpha\beta}}-\frac{\partial f_{(\alpha,\beta)}}{\partial x_{\alpha c_{\omega^{(\alpha,\beta)}(L)}}}\cdot\frac{\partial f_{(\alpha,\beta)}}{\partial x_{r_L\beta}}\right)\\
&+\frac{\partial^2 f_{(\alpha,\beta)}}{\partial x_{r_Lc_{\omega^{(\alpha,\beta)}(L+1)}}\partial x_{\alpha\beta}}\left(\frac{\partial f_{(\alpha,\beta)}}{\partial x_{r_Lc_{\omega^{(\alpha,\beta)}(L+1)}}}\cdot\frac{\partial f_{(\alpha,\beta)}}{\partial x_{\alpha\beta}}-\frac{\partial f_{(\alpha,\beta)}}{\partial x_{\alpha c_{\omega^{(\alpha,\beta)}(L+1)}}}\cdot\frac{\partial f_{(\alpha,\beta)}}{\partial x_{r_L\beta}}\right).
\end{align*}
We recall from \cite[Section 4]{BCH} that if $\chi$ is the determinant of an $n\times n$ matrix of variables, then the matrix of its derivatives has rank $1$ on $\chi=0$. Therefore,
\begin{align*}
\left(\frac{\partial f_{(\alpha,\beta)}}{\partial x_{ij}}\right)_{i\in\mathcal{R}(\alpha,\beta)\cup\{\alpha\}, j\in\mathcal{C}(\alpha,\beta)\cup\{\beta\}}
\end{align*}
has rank $1$ at $\widehat{\omega}^{(\alpha,\beta)}(y_1,y_2,y_3)$. Hence, we deduce that
\begin{align*}
\text{Hess}f_{(\alpha,\beta)}\left(\overline{\nabla}f_{(\alpha,\beta)},\overline{\nabla}f_{(\alpha,\beta)}\right)=0
\end{align*}
at $\widehat{\omega}^{(\alpha,\beta)}(y_1,y_2,y_3)$.
\end{proof}

\subsection{Proof of the theorem}
To compute the trace of the Hessian of $f_{(\alpha,\beta)}$ in the normal space, we begin by considering the Gram matrix and its inverse with respect to the normal frame provided in Proposition \ref{Prop26}. We decompose the normal frame as 
\begin{align*}
\left\{\overline{\nabla}f_{(i,j)}\mid (i,j)\in\mathcal{D}(\omega)\right\}=\mathcal{B}_1\sqcup\mathcal{B}_2,
\end{align*}
where
\begin{align*}
&\mathcal{B}_1=\left\{\overline{\nabla}f_{(r_L,c_{\omega^{(\alpha,\beta)}(L+1)})}, \overline{\nabla}f_{(\alpha,\beta)}\right\},\\
&\mathcal{B}_2=\left\{\overline{\nabla}f_{(i,j)}\Bigm| (i,j)\in\mathcal{D}(\omega)\setminus\{(r_L,c_{\omega^{(\alpha,\beta)}(L+1)}),(\alpha,\beta)\}\right\}.
\end{align*}
It follows from Lemma \ref{Lem42} and \ref{Lem46} that $\mathcal{B}_1\perp\mathcal{B}_2$. 

On the other hand, by Lemma \ref{Lem46} we have
\begin{align*}
\overline{\nabla}f_{(r_L,c_{\omega^{(\alpha,\beta)}(L+1)})}&\left(\widehat{\omega}^{(\alpha,\beta)}(y_1,y_2,y_3)\right)\\
&=\frac{\partial f_{(r_L,c_{\omega^{(\alpha,\beta)}(L+1)})}}{\partial x_{r_Lc_{\omega^{(\alpha,\beta)}(L+1)}}}\left(\widehat{\omega}^{(\alpha,\beta)}(y_1,y_2,y_3)\right)\cdot E_{r_L,c_{\omega^{(\alpha,\beta)}(L+1)}}.
\end{align*}
Observe that $(r_L,c_{\omega^{(\alpha,\beta)}(L+1)})\in\mathcal{D}(\omega)$ cannot satisfy (\ref{star}) since $(\alpha,\beta)\in\mathcal{D}(\omega)$ is a one that satisfies (\ref{star}) and minimizes the sum $\alpha+\beta$. This implies that 
\begin{align*}
\frac{\partial f_{(r_L,c_{\omega^{(\alpha,\beta)}(L+1)})}}{\partial x_{r_Lc_{\omega^{(\alpha,\beta)}(L+1)}}}\left(\widehat{\omega}^{(\alpha,\beta)}(y_1,y_2,y_3)\right)=1,
\end{align*}
and therefore
\begin{align*}
\overline{\nabla}f_{(r_L,c_{\omega^{(\alpha,\beta)}(L+1)})}\left(\widehat{\omega}^{(\alpha,\beta)}(y_1,y_2,y_3)\right)=E_{r_L,c_{\omega^{(\alpha,\beta)}(L+1)}}.
\end{align*}
Moreover, Lemma \ref{Lem42} gives
\begin{align*}
\overline{\nabla}f_{(\alpha,\beta)}&\left(\widehat{\omega}^{(\alpha,\beta)}(y_1,y_2,y_3)\right)\\
&=\delta \cdot E_{\alpha,\beta}-y_1\delta\cdot E_{\alpha,c_{\omega^{(\alpha,\beta)}(L)}}+y_1y_3\delta\cdot E_{\alpha,c_{\omega^{(\alpha,\beta)}(L+1)}}\\
&-y_2\delta\cdot E_{r_L,\beta}+y_1y_2\delta\cdot E_{r_L,c_{\omega^{(\alpha,\beta)}(L)}}-y_1y_2y_3\delta\cdot E_{r_L,c_{\omega^{(\alpha,\beta)}(L+1)}},
\end{align*}
where $\delta=\det \left(\omega^{(\alpha,\beta)}\right)$. 

Putting these computations together, the Gram matrix $G$ with respect to $\mathcal{B}_1\sqcup\mathcal{B}_2$ at $\widehat{\omega}^{(\alpha,\beta)}(y_1,y_2,y_3)$ is block diagonal of the form
\begin{align*}
G=
\begin{pmatrix}
G_1 & O\\
O & G_2
\end{pmatrix},
\end{align*}
where each $O$ denotes a zero matrix of the appropriate size,
\begin{align*}
G_1=
\begin{pmatrix}
1 & -y_1y_2y_3\delta\\
-y_1y_2y_3\delta & (1+y_2^2)(1+y_1^2+y_1^2y_3^2)
\end{pmatrix},
\end{align*}
and $G_2$ is the Gram matrix corresponding to $\mathcal{B}_2$ at $\widehat{\omega}^{(\alpha,\beta)}(y_1,y_2,y_3)$. Consequently, its inverse is given by
\begin{align*}
G^{-1}=
\begin{pmatrix}
G_1^{-1} & O\\
O & G_2^{-1}
\end{pmatrix},
\end{align*}
where
\begin{align*}
G_1^{-1}=
\begin{pmatrix}
(1+y_2^2)(1+y_1^2+y_1^2y_3^2) & y_1y_2y_3\delta\\
y_1y_2y_3\delta & 1
\end{pmatrix}.
\end{align*}

We are now in a position to prove the theorem.
\begin{proof}[Proof of Theorem \ref{Thm41}]
Let $\omega\in\mathfrak{M}_{m,n}(\mathbb{R})$ be a non-vexillary partial permutation. Take $(\alpha,\beta)\in\mathcal{D}(\omega)$ and consider $\widehat{\omega}^{(\alpha,\beta)}(y_1,y_2,y_3)\in X_{\omega}$ as in Section \ref{subsec42}. Then, it follows from Lemma \ref{Lem47}, \ref{Lem48}, and \ref{Lem49}, together with the above computations on the Gram matrix $G$, that the trace of the Hessian of $f_{(\alpha,\beta)}$ in the normal space of $X_{\omega}$ at $\widehat{\omega}^{(\alpha,\beta)}(y_1,y_2,y_3)$ is given by
\begin{align*}
\text{tr}_{T^{\perp}X_{\omega}}\text{Hess}f_{(\alpha,\beta)}&\left(\widehat{\omega}^{(\alpha,\beta)}(y_1,y_2,y_3)\right)\\
&=2\left(G^{-1}\right)_{12}\text{Hess}f_{(\alpha,\beta)}\left(\overline{\nabla}f_{(\alpha,\beta)},\overline{\nabla}f_{(r_L, c_{\omega^{(\alpha,\beta)}(L+1)})}\right)\\
&=2\left(G_1^{-1}\right)_{12}\text{Hess}f_{(\alpha,\beta)}\left(\overline{\nabla}f_{(\alpha,\beta)},\overline{\nabla}f_{(r_L, c_{\omega^{(\alpha,\beta)}(L+1)})}\right)\\
&=2(-1)^{L+\omega^{(\alpha,\beta)}(L+1)}\delta y_1y_2y_3^2\\
&\neq 0,
\end{align*}
since $y_1,y_2,y_3$ are nonzero real numbers and $\delta=\det \left(\omega^{(\alpha,\beta)}\right)\neq 0$. Therefore, by Lemma \ref{minimalcheck}, the mean curvature vectors of $X_{\omega}$ along the points $\widehat{\omega}^{(\alpha,\beta)}(y_1,y_2,y_3)$ do not vanish. This completes the proof.
\end{proof}

\section{Minimality}\label{grass}
\setcounter{equation}{0}
In this section, we establish the minimality of real matrix Schubert varieties for certain subclasses of vexillary partial permutations. These subclasses include determinantal varieties and yield new examples of minimal submanifolds.

Let $\mathfrak{Gr}_2$ denote the set of vexillary partial permutations of the following forms:
\begin{align}\label{IOIOIO}
\begin{pmatrix}
I_r & O_1\\
O_2 & O_3
\end{pmatrix}\quad\text{or}\quad
\begin{pmatrix}
I_{r_1} & O_1 & O_2 & O_3\\
O_4 & O_5 & I_{r_2} & O_6\\
O_7 & O_8 & O_9 & O_{10}
\end{pmatrix}\quad\text{or}\quad
\begin{pmatrix}
I_{r_1} & O_1 & O_2\\
O_3 & O_4 & O_5\\
O_6 & I_{r_2} & O_7\\
O_8 & O_9 & O_{10}
\end{pmatrix},
\end{align}
where $I_r$ denotes the $r\times r$ identity matrix and each $O_j$ denotes a zero matrix of appropriate size. We further define $\widetilde{\mathfrak{Gr}}_2$ to be the set of partial permutations $\mu$ such that $\mathcal{D}(\mu)=\mathcal{D}(\omega)$ for some $\omega\in\mathfrak{Gr}_2$. Clearly, $\mathfrak{Gr}_2\subset\widetilde{\mathfrak{Gr}}_2$. As the following example shows, this inclusion is proper.
\begin{Ex}\label{ex51}\normalfont
Let $\omega$ and $\mu$ be partial permutations given by
\begin{align*}
\omega=
\begin{pmatrix}
1 & 0 & 0 & 0\\
0 & 0 & 1 & 0\\
0 & 0 & 0 & 0
\end{pmatrix},\quad
\mu=
\begin{pmatrix}
1 & 0 & 0 & 0 & 0\\
0 & 0 & 1 & 0 & 0\\
0 & 0 & 0 & 0 & 1\\
0 & 1 & 0 & 0 & 0
\end{pmatrix}.
\end{align*}
Then $\omega\in\mathfrak{Gr}_2$ and $\mathcal{D}(\omega)=\mathcal{D}(\mu)$. Hence $\mu\in\widetilde{\mathfrak{Gr}}_2$, while $\mu\notin\mathfrak{Gr}_2$. 
\end{Ex}
A permutation with only one descent is called \emph{Grassmannian}. One can observe that the Rothe diagrams of elements in $\widetilde{\mathfrak{Gr}}_2$ have the same shape as those of Grassmannian permutations (i.e., they are \emph{of Grassmannian type}) and have at most two connected components.

We first state the following theorem.
\begin{thm}\label{thm52}
Let $\omega\in\mathfrak{M}_{m,n}(\mathbb{R})$ be a vexillary partial permutation. If $\omega\in\widetilde{\mathfrak{Gr}}_2$, then $X_{\omega}\subset\mathfrak{M}_{m,n}(\mathbb{R})$ is minimal.
\end{thm}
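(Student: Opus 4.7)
The plan is to verify, via Lemma \ref{minimalcheck}, that for every $A\in X_\omega$ and every $(\alpha,\beta)\in\mathcal{D}(\omega)$, the trace of $\textup{Hess}\,f_{(\alpha,\beta)}$ in the normal space of $X_\omega$ at $A$ vanishes. Following the hint in the introduction, I would adapt the geometric argument of \cite{BCH}, replacing their continuous helicoidal symmetries (which are not available for general $\omega\in\widetilde{\mathfrak{Gr}}_2$) by a collection of discrete involutive isometries of $\mathfrak{M}_{m,n}(\mathbb{R})$ that preserve $X_\omega$.

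I would separate the argument according to whether $\mathcal{D}(\omega)$ has one or two connected components. When $\mathcal{D}(\omega)$ is connected, $\omega$ corresponds to a partial permutation of the first form in (\ref{IOIOIO}), so that $\overline{X}_\omega$ reduces to a determinantal variety up to a Euclidean factor and minimality follows from \cite{BCH, Koz}; it is preferable, however, to recover this case uniformly from the involutive construction below. The genuinely new content lies in the two-component case, where $\mathcal{D}(\omega)=D_1\sqcup D_2$ and $\omega$ has (up to relabelling) two identity blocks $I_{r_1},I_{r_2}$.

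For the isometries, the natural candidates are of the form $\phi_{D,E}(A)=DAE$, where $D\in O(m)$ and $E\in O(n)$ are diagonal matrices with entries $\pm 1$, together with variants obtained by permuting rows (resp.~columns) lying strictly between consecutive $1$'s of $\omega$. Each such $\phi_{D,E}$ is an isometry of $\mathfrak{M}_{m,n}(\mathbb{R})$, and using the block shape of $\omega\in\widetilde{\mathfrak{Gr}}_2$ one checks that every upper-left rank condition defining $\overline{X}_\omega$ is preserved, so that $\phi_{D,E}(X_\omega)=X_\omega$. Since the mean curvature vector is equivariant under ambient isometries preserving the submanifold, at any fixed point $A$ of $\phi_{D,E}$ in $X_\omega$ the differential $d\phi_{D,E}$ acts on $T_A^{\perp}X_\omega$ and the mean curvature vector $\vec H(A)$ must lie in its $+1$-eigenspace. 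For each $(\alpha,\beta)\in\mathcal{D}(\omega)$, I would then choose $D$ and $E$ so that the $(\alpha,\beta)$-component of $\vec H$, expressed in the normal frame of Proposition \ref{Prop26}, lies in the $-1$-eigenspace, forcing it to vanish at all such fixed points. Iterating over a suitable finite family of involutions indexed by $\mathcal{D}(\omega)$ forces $\vec H$ to vanish on a Zariski dense subset of $X_\omega$; combined with the algebraicity of $\vec H$ as a rational vector field on $X_\omega$, this yields $\vec H\equiv 0$.

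The main obstacle is the construction in the two-component case: the involutions must respect not only the rank conditions arising from each block $D_j$ individually, but also those arising from submatrices $A_{[p,q]}$ that straddle both components. The Grassmannian (rectangular) shape of each $D_j$ and the restriction to at most two components should provide enough freedom to realize the required sign flips one pair $(\alpha,\beta)$ at a time; verifying that the chosen involutions have enough fixed points within $X_\omega$ to conclude the density step is a secondary technical issue, which I would address by exhibiting parametric families of fixed matrices using the orbit description (\ref{orbit}).
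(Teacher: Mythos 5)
Your overall strategy---reduce to $\mathfrak{Gr}_2$, handle the one-component case via determinantal varieties, and kill the mean curvature vector with involutive isometries preserving $\overline{X}_\omega$---matches the paper's, but your choice of involutions creates a genuine gap in the density step, and I do not see how to repair it within your framework. An involution $\phi_{D,E}(A)=DAE$ with $D,E$ diagonal sign matrices has fixed-point set equal to a coordinate subspace of $\mathfrak{M}_{m,n}(\mathbb{R})$ (the entries $A_{ij}$ with $D_{ii}E_{jj}=-1$ must vanish); the same is true, up to a linear change of coordinates, for the variants twisted by permutations. Hence $\mathrm{Fix}(\phi_{D,E})\cap X_\omega$ is either all of $X_\omega$ (in which case $d\phi_{D,E}$ is the identity on the normal space and yields no information) or a proper closed subvariety of the irreducible variety $X_\omega$. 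Since there are only finitely many such involutions, the union of their fixed-point loci is a finite union of proper closed subvarieties, which is never Zariski dense. So the conclusion ``$\vec H$ vanishes on a Zariski dense subset'' cannot be reached this way, and the ``parametric families of fixed matrices'' you hope to exhibit via (\ref{orbit}) would still be confined to these coordinate slices. A secondary issue: the normal frame of Proposition \ref{Prop26} is not orthogonal and its vectors are not eigenvectors of $d\phi_{D,E}$, so ``the $(\alpha,\beta)$-component of $\vec H$ lies in the $-1$-eigenspace'' needs more care even at a genuine fixed point.

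The paper avoids the density problem entirely by making the involution depend on the point. For $\omega$ as in (\ref{omegaaaa}) and \emph{each} $Q\in X_\omega$, it takes $U_Q\in O(m)$ to be the reflection fixing the column space $C(Q)$ and negating $C(Q)^{\perp}$, and lets $\Phi_Q$ act by $U_Q$ on every column; a case-by-case check of the rank inequalities shows $\Phi_Q(\overline{X}_\omega)=\overline{X}_\omega$, and $\Phi_Q(Q)=Q$ by construction, so \emph{every} point is a fixed point of its own symmetry. An auxiliary tangent vector $t_c(\cdot)$ (projection of the last $r_2+n_2$ columns onto $C(Q)$) shows those columns of any normal vector lie in $C(Q)^\perp$, so $(\Phi_Q)_*$ negates them and equivariance forces the last $r_2+n_2$ columns of $H(Q)$ to vanish. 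A second reflection $\Psi_Q$, built from the row space of $Q_{[m,r_1+n_1]}$, then negates what remains of $H(Q)$, giving $H(Q)=0$ pointwise with no appeal to density or algebraicity. If you want to salvage your writeup, replacing your diagonal sign matrices by these $Q$-dependent orthogonal reflections is the essential missing idea; the rest of your outline (reduction to $\mathfrak{Gr}_2$, transposing to identify the second and third forms in (\ref{IOIOIO}), equivariance of $\vec H$) is sound.
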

Since the determinantal varieties correspond to partial permutations in $\mathfrak{Gr}_2$ (see Example \ref{ex27}), this theorem extends the result in \cite{BCH}. We also remark that the partial permutation $\omega$ given in Example \ref{ex51} provides the simplest non-determinantal example among the new minimal submanifolds established by the above theorem.
\begin{rmk}\label{rmk53}\normalfont
Let $\omega$ and $\mu$ be the partial permutations given in Example \ref{ex51}. As mentioned at the beginning of Section \ref{MSchubert}, the defining ideal of a real matrix Schubert variety depends only on the elements of the Rothe diagram and the corresponding upper-left submatrices of variables. Since $\mathcal{D}(\omega)=\mathcal{D}(\mu)$, the varieties $\overline{X}_{\omega}$ and $\overline{X}_{\mu}$ share the same defining equations. Consequently, the variables corresponding to the fourth row and the fifth column in $\overline{X}_{\mu}$ are free, and hence $\overline{X}_{\omega}\times\mathbb{R}^{8}\subset\mathbb{R}^{12}\times\mathbb{R}^{8}$ is congruent to $\overline{X}_{\mu}\subset\mathbb{R}^{20}$. Therefore, they are essentially equivalent from the viewpoint of minimality.
\end{rmk}

We now consider a vexillary partial permutation $\omega$ such that $(1,1)\in\mathcal{D}(\omega)$. The corresponding real matrix Schubert variety $\overline{X}_{\omega}$ can be decomposed as a product of smaller real matrix Schubert varieties and a Euclidean factor. 

For instance, let $\omega\in\mathfrak{M}_{7,8}(\mathbb{R})$ be given by
\begin{align}\label{ohmmm}
\omega=
\begin{pmatrix}
0 & 0 & 0 & 0 & 0 & 0 & 1 & 0\\
0 & 0 & 0 & 0 & 0 & 0 & 0 & 0\\
0 & 0 & 0 & 0 & 1 & 0 & 0 & 0\\
0 & 0 & 0 & 0 & 0 & 0 & 0 & 1\\
1 & 0 & 0 & 0 & 0 & 0 & 0 & 0\\
0 & 0 & 1 & 0 & 0 & 0 & 0 & 0\\
0 & 0 & 0 & 0 & 0 & 1 & 0 & 0
\end{pmatrix}.
\end{align}
The Rothe diagram is
\begin{align*}
\mathcal{D}(\omega)=&\left\{(1,1), (1,2), (1,3), (1,4), (1,5), (1,6), (2,1), (2,2), (2,3), (2,4), (2,5), (2,6),\right.\\
&\left. (3,1), (3,2), (3,3), (3,4), (4,1), (4,2), (4,3), (4,4)\right\}\\
&\cup\left\{(2,8)\right\}\cup\left\{(4,6)\right\}\cup\left\{(6,2),(7,2)\right\}\cup\left\{(7,4)\right\},
\end{align*}
which is decomposed into connected components. 

The defining equations coming from the first connected component are
\begin{align*}
&x_{11}=x_{12}=x_{13}=x_{14}=x_{15}=x_{16}=x_{21}=x_{22}=x_{23}=x_{24}=x_{25}=x_{26}\\
&=x_{31}=x_{32}=x_{33}=x_{34}=x_{41}=x_{42}=x_{43}=x_{44}=0.
\end{align*}
These define a subspace $P$ of dimension $36$ in $\mathfrak{M}_{7,8}(\mathbb{R})$ (codimension $20$), and we have $\overline{X}_{\omega}\subset P$. 

The equations from the second component correspond to the vanishing of all $2\times 2$ minors of
\begin{align*}
\mathbb{X}_{[2,8]}=
\begin{pmatrix}
x_{11} & x_{12} & x_{13} & x_{14} & x_{15} & x_{16} & x_{17} & x_{18}\\
x_{21} & x_{22} & x_{23} & x_{24} & x_{25} & x_{26} & x_{27} & x_{28}
\end{pmatrix},
\end{align*}
which, in the subspace $P$, are equivalent to 
\begin{align*}
\det
\begin{pmatrix}
x_{17} & x_{18}\\
x_{27} & x_{28}
\end{pmatrix}=0.
\end{align*}
This defines a real matrix Schubert variety in $\mathfrak{M}_{2,2}(\mathbb{R})$, with variables $x_{17}$, $x_{18}$, $x_{27}$, $x_{28}$, corresponding to
\begin{align*}
\omega_1:=\omega\big|_{\{1,2\},\{7,8\}}=
\begin{pmatrix}
1 & 0\\
0 & 0
\end{pmatrix}.
\end{align*}
Similarly, the equations from the third component are equivalent in $P$ to 
\begin{align*}
\det
\begin{pmatrix}
x_{35} & x_{36}\\
x_{45} & x_{46}
\end{pmatrix}=0,
\end{align*}
giving a real matrix Schubert variety in $\mathfrak{M}_{2,2}(\mathbb{R})$, with variables $x_{35}$, $x_{36}$, $x_{45}$, $x_{46}$, corresponding to
\begin{align*}
\omega_2:=\omega\big|_{\{3,4\},\{5,6\}}=
\begin{pmatrix}
1 & 0\\
0 & 0
\end{pmatrix}.
\end{align*}
Finally, the equations from the fourth and fifth components are equivalent in $P$ to the vanishing of all $2\times 2$ minors of 
\begin{align*}
\begin{pmatrix}
x_{51} & x_{52}\\
x_{61} & x_{62}\\
x_{71} & x_{72}
\end{pmatrix}
\end{align*}
and all $3\times 3$ minors of
\begin{align*}
\begin{pmatrix}
x_{51} & x_{52} & x_{53} & x_{54}\\
x_{61} & x_{62} & x_{63} & x_{64}\\
x_{71} & x_{72} & x_{73} & x_{74}
\end{pmatrix},
\end{align*}
defining a real matrix Schubert varieties in $\mathfrak{M}_{3,4}(\mathbb{R})$, with variables $x_{51}$, $x_{52}$, $x_{53}$, $x_{54}$, $x_{61}$, $x_{62}$, $x_{63}$, $x_{64}$, $x_{71}$, $x_{72}$, $x_{73}$, $x_{74}$, corresponding to
\begin{align*}
\omega_3:=\omega\big|_{\{5,6,7\},\{1,2,3,4\}}=
\begin{pmatrix}
1 & 0 & 0 & 0\\
0 & 0 & 1 & 0\\
0 & 0 & 0 & 0
\end{pmatrix}.
\end{align*}

The remaining variables $x_{37}$, $x_{38}$, $x_{47}$, $x_{48}$, $x_{55}$, $x_{56}$, $x_{57}$, $x_{58}$, $x_{65}$, $x_{66}$, $x_{67}$, $x_{68}$, $x_{75}$, $x_{76}$, $x_{77}$, $x_{78}$ are not contained in the upper-left submatrices corresponding to any elements of the Rothe diagram. Hence they are free variables, giving rise to a Euclidean factor $\mathbb{R}^{16}$. 

By grouping the defining equations in $P$ into algebraically independent subsets, one can deduce from the above observations that the real matrix Schubert variety $\overline{X}_{\omega}\subset P$ decomposes as
\begin{align*}
\overline{X}_{\omega}\cong\overline{X}_{\omega_1}\times\overline{X}_{\omega_2}\times\overline{X}_{\omega_3}\times \mathbb{R}^{16}. 
\end{align*}
The same type of decomposition can be easily seen to hold for any vexillary partial permutation $\omega$ satisfying $(1,1)\in\mathcal{D}(\omega)$.

Combining the above observation with Theorem \ref{thm52}, we immediately obtain the following corollary.
\begin{cor}\label{corcorcorcor}
Let $\omega\in\mathfrak{M}_{m,n}(\mathbb{R})$ be a vexillary partial permutation such that $(1,1)\in\mathcal{D}(\omega)$. Then $\overline{X}_{\omega}$ is congruent to $\overline{X}_{\omega_1}\times\cdots\times\overline{X}_{\omega_k}\times\mathbb{R}^N$ for some $N\geq 0$ and vexillary partial permutations $\omega_1,\cdots,\omega_k$. If all $\omega_j$ belong to $\widetilde{\mathfrak{Gr}}_2$, then $X_{\omega}\subset\mathfrak{M}_{m,n}(\mathbb{R})$ is minimal.
\end{cor}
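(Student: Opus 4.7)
The plan is to carry out the block decomposition illustrated by the worked example following (\ref{ohmmm}) and then invoke Theorem \ref{thm52} on each factor, using that minimality is preserved under Riemannian products.

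First I would enumerate the connected components $C_0, C_1, \ldots, C_k$ of $\mathcal{D}(\omega)$, labelling so that $(1,1) \in C_0$. Since $\mathcal{R}(\cdot,\cdot)$ and $\mathcal{C}(\cdot,\cdot)$ are constant on each connected component and $\mathcal{R}(1,1) = \mathcal{C}(1,1) = \emptyset$, we have $\textup{rk}(\omega_{[i,j]}) = 0$ for every $(i,j) \in C_0$. Hence the generators of the defining ideal coming from positions in $C_0$ are the single entries of $\mathbb{X}_{[i,j]}$, and one checks that every such entry is again indexed by an element of $C_0$ (because $\omega$ has no $1$ in its upper-left $i \times j$ block). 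Let $P$ be the coordinate subspace of $\mathfrak{M}_{m,n}(\mathbb{R})$ obtained by setting $x_{ij} = 0$ for all $(i,j) \in C_0$; then $\overline{X}_\omega \subset P$.

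For each $l \geq 1$, let $R_l := \mathcal{R}(i,j)$ and $S_l := \mathcal{C}(i,j)$ for any $(i,j) \in C_l$, and let $\rho_l$ (resp.\ $\sigma_l$) be the union of $R_l$ (resp.\ $S_l$) with the row (resp.\ column) indices appearing in $C_l$. I would then set $\omega_l := \omega|_{\rho_l,\sigma_l}$, viewed as a partial permutation after reindexing. A direct computation shows that, for $(a,b) \in C_l$, the sets $\mathcal{R}_{\omega_l}(a,b)$ and $\mathcal{C}_{\omega_l}(a,b)$ coincide with $R_l$ and $S_l$, so that $\omega_l|_{\mathcal{R}_{\omega_l}(a,b),\mathcal{C}_{\omega_l}(a,b)} = \omega|_{R_l,S_l}$ is the identity matrix by Lemma \ref{vex} applied to $\omega$; hence each $\omega_l$ is vexillary and $\mathcal{D}(\omega_l)$ is in bijection with $C_l$. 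Once the vanishing relations defining $P$ are imposed, the rank-condition generators coming from $(p,q) \in C_l$ reduce to those defining $\overline{X}_{\omega_l}$, the variable sets corresponding to distinct blocks are disjoint, and the remaining coordinates of $P$ are free. This yields the congruence $\overline{X}_\omega \cong \overline{X}_{\omega_1} \times \cdots \times \overline{X}_{\omega_k} \times \mathbb{R}^N$.

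Passing to regular parts gives an isometric decomposition $X_\omega \cong X_{\omega_1} \times \cdots \times X_{\omega_k} \times \mathbb{R}^N$. Since the mean curvature vector of a Riemannian product of submanifolds is the direct sum of the mean curvature vectors of the factors and $\mathbb{R}^N$ is trivially minimal, minimality of every $X_{\omega_l}$ implies minimality of $X_\omega$; the hypothesis $\omega_l \in \widetilde{\mathfrak{Gr}}_2$ supplies this via Theorem \ref{thm52}. The step I expect to be the main obstacle is identifying the ideal of $\overline{X}_\omega$ inside $P$ with the sum of the ideals of the $\overline{X}_{\omega_l}$'s, i.e., checking that the rank constraints $\textup{rk}(A_{[p,q]}) \leq \textup{rk}(\omega_{[p,q]})$ for $(p,q) \in C_l$ become, after the linear relations of $P$ are imposed, exactly the constraints defining $\overline{X}_{\omega_l}$ on the submatrix indexed by $\rho_l$ and $\sigma_l$. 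This reduces to the combinatorial observation that every $1$ of $\omega$ lying in the upper-left $p \times q$ block has its row in $\rho_l$ and its column in $\sigma_l$; once this is in place, the reduction is mechanical, as the example demonstrates in full.
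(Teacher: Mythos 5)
Your overall strategy matches the paper's: cut down to the coordinate subspace $P$ determined by the component of $(1,1)$, split the remaining rank conditions into factors, and conclude by Theorem \ref{thm52} together with the product rule for the mean curvature vector. However, there is a genuine gap at the step you yourself flag as the main obstacle, and it is not merely a matter of identifying ideals: your assignment of one factor $\omega_l$ per connected component $C_l$, together with the claim that ``the variable sets corresponding to distinct blocks are disjoint,'' is false. The paper's own example (\ref{ohmmm}) is a counterexample: the components $\{(6,2),(7,2)\}$ and $\{(7,4)\}$ give, in your notation, $\rho_3\times\sigma_3=\{5,6,7\}\times\{1,2\}$ and $\rho_4\times\sigma_4=\{5,6,7\}\times\{1,3,4\}$, which share the variables $x_{51},x_{61},x_{71}$. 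The rank conditions coming from these two components therefore cannot be separated into a coordinate product of two matrix Schubert varieties; they must be merged into the single factor $\omega_3=\omega\big|_{\{5,6,7\},\{1,2,3,4\}}$, whose Rothe diagram has two connected components. This merging is exactly why the factors $\omega_j$ in the corollary may fail to be determinantal and why the hypothesis $\omega_j\in\widetilde{\mathfrak{Gr}}_2$ (which allows up to two components of Grassmannian type) is a genuine restriction rather than an automatic consequence of your construction, as it would be if each factor carried a single connected component.

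To repair the argument you need to group the components $C_1,\cdots,C_k$ into clusters according to the equivalence relation generated by $\left(\rho_l\times\sigma_l\right)\cap\left(\rho_{l'}\times\sigma_{l'}\right)\neq\emptyset$, take $\omega_j$ to be the restriction of $\omega$ to the union of the rows and columns occurring in each cluster, and only then argue that the defining equations of distinct clusters involve disjoint sets of variables once the linear relations of $P$ are imposed. The rest of your outline (each $\omega_j$ vexillary by Lemma \ref{vex}, the free variables contributing $\mathbb{R}^N$, and minimality of a product of minimal factors) then goes through as in the paper.
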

By the above corollary, the partial permutation $\omega$ given in (\ref{ohmmm}) gives rise to a minimal submanifold in $\mathfrak{M}_{7,8}(\mathbb{R})$. Since its Rothe diagram has more than two connected components, $\omega$ does not belong to $\widetilde{\mathfrak{Gr}}_2$.

In the remaining of this section, we provide the proof of Theorem \ref{thm52}.

\subsection{Involutive isometries}\label{subsec51}
Motivated by the geometric proof given in \cite{BCH}, we consider involutive isometries. Let $\omega\in\mathfrak{M}_{m,n}(\mathbb{R})$ be a partial permutation of the form
\begin{align}\label{omegaaaa}
\omega=\begin{pmatrix}
I_{r_1} & O_{r_1\times n_1} & O_{r_1\times r_2} & O_{r_1\times n_2}\\
O_{r_2\times r_1} & O_{r_2\times n_1} & I_{r_2} & O_{r_2\times n_2}\\
O_{m_1\times r_1} & O_{m_1\times n_1} & O_{m_1\times r_2} & O_{m_1\times n_2}
\end{pmatrix}\in\mathfrak{Gr}_2,
\end{align}
where $m=m_1+r_1+r_2$ and $n=n_1+n_2+r_1+r_2$, and where $I_r$ and $O_{a\times b}$ denote the $r\times r$ identity and the $a\times b$ zero matrix, respectively.

Suppose $Q\in X_\omega$. Let us denote by $C(Q)$ the column space of $Q$, and let $U_Q\in O(m)$ be the unique element of the orthogonal group satisfying
\begin{align*}
U_Q\big|_{C(Q)}=id,\quad U_Q\big|_{C(Q)^{\perp}}=-id.
\end{align*}
Define $\Phi_Q: \mathfrak{M}_{m,n}(\mathbb{R})\to\mathfrak{M}_{m,n}(\mathbb{R})$ by
\begin{align*}
\Phi_Q(A):=\left(U_Q\cdot A_1\ \Big| \ \cdots\ \Big| \ U_Q\cdot A_n\right),\quad \forall A=\left(A_1\ \Big| \ \cdots\ \Big| \ A_n\right)\in\mathfrak{M}_{m,n}(\mathbb{R}).
\end{align*}
Since $U_Q^2=id$, we have $\Phi_Q\circ \Phi_Q=id$. As $U_Q\in O(m)$, it follows that the map $\Phi_Q$ is an involutive isometry of $\mathfrak{M}_{m,n}(\mathbb{R})$ fixing $Q$.
\begin{lemma}\label{lem55}
For each $Q\in X_\omega$, the map $\Phi_Q$ induces a symmetry of $\overline{X}_{\omega}$ such that $\Phi_Q(Q)=Q$.
\end{lemma}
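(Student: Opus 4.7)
The plan is to verify the two assertions of the lemma separately. For the fixing property $\Phi_Q(Q) = Q$, observe that every column of $Q$ lies in $C(Q)$, on which $U_Q$ restricts to the identity; thus $U_Q \cdot Q_j = Q_j$ for each column $Q_j$, giving $\Phi_Q(Q) = Q$ immediately.

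The nontrivial statement is that $\Phi_Q(\overline{X}_\omega) = \overline{X}_\omega$. My strategy is to reduce the rank conditions defining $\overline{X}_\omega$ to conditions involving only full-column submatrices $A_{[m, q]}$, because such conditions are obviously preserved by left multiplication by any invertible matrix, in particular by $U_Q \in O(m)$. To carry this out, I would first compute the Rothe diagram directly from its definition and find that for $\omega$ of the form (\ref{omegaaaa}),
\[
\mathcal{D}(\omega) = \bigl\{(p,q):r_1 < p \leq m,\ r_1 < q \leq r_1 + n_1\bigr\} \sqcup \bigl\{(p,q):r_1 + r_2 < p \leq m,\ r_1 + n_1 + r_2 < q \leq n\bigr\},
\]
a disjoint union of two rectangles, each reaching down to the bottom row $p=m$. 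Since $\text{rk}\left(\omega_{[p,q]}\right)$ is constant on each rectangle and $\text{rk}\left(A_{[p,q]}\right) \leq \text{rk}\left(A_{[m,q]}\right)$, the defining conditions of $\overline{X}_\omega$ collapse to the two inequalities
\[
\text{rk}\left(A_{[m, r_1 + n_1]}\right) \leq r_1 \quad\text{and}\quad \text{rk}\left(A_{[m,n]}\right) \leq r_1 + r_2,
\]
both of which are conditions on full-column submatrices.

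It then remains to observe that for any invertible $m \times m$ matrix $M$ and any $q$, one has $\text{rk}\left((MA)_{[m, q]}\right) = \text{rk}\left(M \cdot A_{[m, q]}\right) = \text{rk}\left(A_{[m,q]}\right)$; applied with $M = U_Q$, this shows that both inequalities above are preserved by $\Phi_Q$. Hence $\Phi_Q(\overline{X}_\omega) \subseteq \overline{X}_\omega$, and equality follows because $\Phi_Q$ is an involution. The principal obstacle circumvented by this approach is that rank conditions on truncated submatrices $A_{[p,q]}$ with $p < m$ are in general not preserved by left multiplication, since such multiplication mixes rows beyond the first $p$; the two-rectangle structure forced by the form (\ref{omegaaaa}) of $\omega$ is precisely what allows every essential defining condition to be expressed in terms of full-column submatrices, making the orthogonal action from the left automatically compatible with the variety.
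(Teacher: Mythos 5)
Your proof is correct, but it takes a genuinely different route from the paper's. The paper verifies, through a six-way case division on the pair $(p,q)$, that $\text{rk}\left(\Phi_Q(A)_{[p,q]}\right)\leq\text{rk}\left(\omega_{[p,q]}\right)$ for every $A\in X_\omega$, and then upgrades the inclusion $\Phi_Q\left(X_\omega\right)\subseteq\overline{X}_\omega$ to the equality $\Phi_Q\left(\overline{X}_\omega\right)=\overline{X}_\omega$ by density and continuity. You instead identify $\overline{X}_\omega$ set-theoretically with the locus cut out by the two conditions $\text{rk}\left(A_{[m,r_1+n_1]}\right)\leq r_1$ and $\text{rk}(A)\leq r_1+r_2$, which involve only full-column submatrices and are therefore invariant under left multiplication by any element of $\mathrm{GL}_m(\mathbb{R})$, in particular by $U_Q$. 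Your computation of the Rothe diagram as two rectangles reaching the bottom row is correct, and the collapse of the conditions within each rectangle to its bottom-right corner is valid. What your approach buys: it is shorter, it yields $\Phi_Q\left(\overline{X}_\omega\right)=\overline{X}_\omega$ directly on the closed variety without passing through the regular locus, and it in fact proves the stronger statement that $\overline{X}_\omega$ is invariant under the full left $\mathrm{GL}_m(\mathbb{R})$-action. What the paper's approach buys: it argues directly from the definition of $\overline{X}_\omega$ as the intersection of \emph{all} the rank conditions, whereas your reduction relies on the fact, stated in Section \ref{MSchubert} and due to Fulton, that the conditions indexed by $\mathcal{D}(\omega)$ suffice to cut out $\overline{X}_\omega$. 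If you wish to avoid citing that fact, you would need to check directly that your two inequalities imply all the remaining rank conditions; the only non-immediate case is $r_1<p$, $r_1+n_1<q$, $q-n_1<p$, where one uses $\text{rk}\left(A_{[p,q]}\right)\leq\text{rk}\left(A_{[p,r_1+n_1]}\right)+(q-r_1-n_1)\leq r_1+(q-r_1-n_1)=q-n_1=\text{rk}\left(\omega_{[p,q]}\right)$. Since the paper does state the generation result explicitly, your use of it is legitimate and the proof goes through.
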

\begin{proof}
It suffices to show that $\Phi_Q\left(X_\omega\right)\subset \overline{X}_{\omega}$. Then, since $X_\omega$ is open dense in $\overline{X}_\omega$, the continuity of $\Phi_Q$ implies that $\Phi_Q\left(\overline{X}_\omega\right)=\overline{X}_\omega$.

Suppose $A\in X_\omega$. To prove that $\Phi_Q(A)\in\overline{X}_\omega$, we verify 
\begin{align}\label{ineq1}
\text{rk}\left(\Phi_Q(A)_{[p,q]}\right)\leq \text{rk}\left(\omega_{[p,q]}\right)
\end{align}
for all $1\leq p\leq m$ and $1\leq q\leq n$ (see Section \ref{MSchubert} for the notation).
We divide all possible pairs of $p$ and $q$ into the following six cases:
\begin{itemize}
\item[(1)] $1\leq q<p\leq r_1$.
\item[(2)] $1\leq p\leq r_1$, $1\leq q\leq n$, $p\leq q$. 
\item[(3)] $r_1+1\leq p\leq r_1+r_2$, $r_1+n_1+1\leq q\leq n$, $p\leq q-n_1$.
\item[(4)] $r_1+1\leq p\leq m$, $1\leq q\leq r_1+n_1$.
\item[(5)] $r_1+r_2+1\leq p\leq m$, $r_1+n_1+1\leq q\leq n$.
\item[(6)] $r_1+1\leq q-n_1<p\leq r_1+r_2$.
\end{itemize}

In case $(1)$, since $\Phi_Q(A)_{[p,q]}$ has $q$ columns, its rank is at most $q$, which equals $\text{rk}\left(\omega_{[p,q]}\right)$.
Similarly, in cases $(2)$ and $(3)$, the matrix $\Phi_Q(A)_{[p,q]}$ has $p$ rows. Hence, its rank is at most $p=\text{rk}\left(\omega_{[p,q]}\right)$.

For cases $(4)$ and $(5)$, we observe that
\begin{align*}
\text{rk}\left(\Phi_Q(A)_{[p,q]}\right)&\leq \text{rk}\left(\Phi_Q(A)_{[m,q]}\right)\\
&=\text{rk}\left(U_Q\cdot A_1\ \Big| \ \cdots\ \Big| \ U_Q\cdot A_q\right)\\
&\leq\text{rk}\left(A_1\ \Big| \ \cdots\ \Big| \ A_q\right)=\text{rk}\left(A_{[m,q]}\right).
\end{align*}
Since $A\in X_\omega$, we have $\text{rk}\left(A_{[m,q]}\right)=\text{rk}\left(\omega_{[m,q]}\right)$. Moreover, $\text{rk}\left(\omega_{[m,q]}\right)=\text{rk}\left(\omega_{[p,q]}\right)$ holds for these cases. Thus, the inequality (\ref{ineq1}) follows.

Finally, in case $(6)$, we use the inequality
\begin{align*}
\text{rk}\left(\Phi_Q(A)_{[p,q]}\right)\leq \text{rk}\left(\Phi_Q(A)_{[p,r_1+n_1]}\right)+(q-r_1-n_1).
\end{align*}
By case $(4)$, the first term is bounded by $\text{rk}\left(\omega_{[p,r_1+n_1]}\right)=r_1$. A direct computation gives $r_1+(q-r_1-n_1)=\text{rk}\left(\omega_{[p,q]}\right)$, which yields the desired inequality. 

Thus, the inequality (\ref{ineq1}) holds in all cases. Since this holds for every $A\in X_\omega$, we obtain $\Phi_Q\left(X_\omega\right)\subset \overline{X}_{\omega}$. This completes the proof.
\end{proof}
\begin{rmk}\normalfont
We remark that the above map was first introduced in \cite{BCH} to show that the determinantal varieties have helicoidal symmetries. In our case the map $\Phi_Q$ does not necessarily induce a helicoidal symmetry.
\end{rmk}

Similarly, for $Q\in X_\omega$, let us denote by $R\left(Q_{[m,r_1+n_1]}\right)$ the row space of $Q_{[m,r_1+n_1]}$. Consider $V_Q\in O(r_1+n_1)$ the unique element of the orthogonal group satisfying  
\begin{align*}
V_Q\big|_{R\left(Q_{[m,r_1+n_1]}\right)}=id,\quad V_Q\big|_{R\left(Q_{[m,r_1+n_1]}\right)^{\perp}}=-id,
\end{align*}
and extend $V_Q$ to $\widehat{V}_Q\in O(n)$ by setting
\begin{align*}
\widehat{V}_Q\cdot
\begin{pmatrix}
v_1\\
v_2
\end{pmatrix}=
\begin{pmatrix}
V_Q\cdot v_1\\
v_2
\end{pmatrix},\quad \forall v_1\in\mathbb{R}^{r_1+n_1}, v_2\in\mathbb{R}^{r_2+n_2}.
\end{align*}
Define $\Psi_Q: \mathfrak{M}_{m,n}(\mathbb{R})\to\mathfrak{M}_{m,n}(\mathbb{R})$ by
\begin{align*}
\Psi_Q(B)=
\begin{pmatrix}
\widehat{V}_Q\cdot B_1\\
\vdots\\
\widehat{V}_Q\cdot B_m
\end{pmatrix},\quad \forall B=
\begin{pmatrix}
B_1\\
\vdots\\
B_m
\end{pmatrix}\in\mathfrak{M}_{m,n}(\mathbb{R}).
\end{align*}
Since $\widehat{V}_Q^2=id$ and $\widehat{V}_Q\in O(n)$, it follows that $\Psi_Q$ is an involutive isometry of $\mathfrak{M}_{m,n}(\mathbb{R})$ fixing $Q$.
\begin{lemma}\label{lem57}
For each $Q\in X_\omega$, the map $\Psi_Q$ induces a symmetry of $\overline{X}_{\omega}$ such that $\Psi_Q(Q)=Q$.
\end{lemma}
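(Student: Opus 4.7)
The plan is to follow the strategy of Lemma \ref{lem55} verbatim, with left column operations replaced by right row operations. Since $V_Q$ is symmetric (being orthogonal with eigenvalues $\pm 1$), so is $\widehat{V}_Q$, and unfolding the definition gives the matrix form $\Psi_Q(B) = B\widehat{V}_Q$. Because $\Psi_Q$ is an involutive isometry of $\mathfrak{M}_{m,n}(\mathbb{R})$, it suffices to show $\Psi_Q(X_\omega) \subseteq \overline{X}_\omega$; the full invariance $\Psi_Q(\overline{X}_\omega) = \overline{X}_\omega$ will then follow from density of $X_\omega$ in $\overline{X}_\omega$, continuity, and $\Psi_Q \circ \Psi_Q = \mathrm{id}$.

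For the rank inequality $\mathrm{rk}(\Psi_Q(A)_{[p,q]}) \leq \mathrm{rk}(\omega_{[p,q]})$ at $A \in X_\omega$, the crucial observation is that $\widehat{V}_Q$ mixes only the first $r_1+n_1$ columns, while leaving the remaining $r_2+n_2$ columns unchanged. This naturally splits the argument into two regimes. When $q > r_1+n_1$, one has $\Psi_Q(A)_{[p,q]} = A_{[p,q]}\,\mathrm{diag}(V_Q, I_{q-r_1-n_1})$, and right multiplication by an invertible matrix preserves rank, so $\mathrm{rk}(\Psi_Q(A)_{[p,q]}) = \mathrm{rk}(A_{[p,q]}) = \mathrm{rk}(\omega_{[p,q]})$. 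When $q \leq r_1+n_1$, there are two subcases: for $q \leq r_1$, or for $p \leq r_1$, the trivial dimension bound $\mathrm{rk}(\Psi_Q(A)_{[p,q]}) \leq \min(p,q)$ already coincides with $\mathrm{rk}(\omega_{[p,q]})$; and for $r_1 < q \leq r_1+n_1$ with $p > r_1$, enlarging the submatrix to the full left block yields
$$\mathrm{rk}(\Psi_Q(A)_{[p,q]}) \leq \mathrm{rk}(\Psi_Q(A)_{[p,r_1+n_1]}) = \mathrm{rk}(A_{[p,r_1+n_1]} V_Q) = \mathrm{rk}(A_{[p,r_1+n_1]}) = r_1 = \mathrm{rk}(\omega_{[p,q]}),$$
using again the invertibility of $V_Q$ together with $A \in X_\omega$.

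The fixed-point identity $\Psi_Q(Q) = Q$ reduces to the block equation $Q^L V_Q = Q^L$, where $Q^L := Q_{[m,r_1+n_1]}$. Since every row of $Q^L$ lies in its own row space $R(Q^L)$, on which $V_Q$ acts as the identity, this equation is immediate from the construction of $V_Q$. The main obstacle is really just the case analysis for the rank bound: because $V_Q$ mixes only the left block of columns, submatrices that sit entirely inside this block cannot directly inherit their rank from $A$, and one must patch the estimate either by the trivial dimension bound or by enlarging to the full $(r_1+n_1)$-column block, where invertibility of $V_Q$ can be exploited.
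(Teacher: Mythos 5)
Your proof is correct and follows essentially the same route as the paper: reduce to showing $\Psi_Q(X_\omega)\subseteq\overline{X}_\omega$, verify the rank inequalities $\mathrm{rk}\left(\Psi_Q(A)_{[p,q]}\right)\leq\mathrm{rk}\left(\omega_{[p,q]}\right)$ case by case, and conclude by density, continuity, and involutivity. Your case decomposition is somewhat more economical than the paper's six cases, since the identity $\Psi_Q(A)_{[p,q]}=A_{[p,q]}\,\mathrm{diag}(V_Q,I_{q-r_1-n_1})$ for $q\geq r_1+n_1$ gives exact rank preservation and subsumes several of the paper's subcases at once, but the underlying ingredients (trivial dimension bounds plus enlargement to the full $(r_1+n_1)$-column block where invertibility of $V_Q$ applies) are the same.
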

\begin{proof}
As in the proof of Lemma \ref{lem55}, we show $\Psi_Q\left(X_\omega\right)\subset \overline{X}_\omega$ by verifying that for every $B\in X_\omega$,
\begin{align}\label{ineq2}
\text{rk}\left(\Psi_Q(B)_{[p,q]}\right)\leq \text{rk}\left(\omega_{[p,q]}\right)
\end{align}
for all $1\leq p\leq m$ and $1\leq q\leq n$. All possible pairs of $p$ and $q$ can be divided into the following six cases:
\begin{itemize}
\item[(1)] $1\leq p\leq m$, $1\leq q\leq r_1$, $q<p$.
\item[(2)] $1\leq p\leq r_1$, $1\leq q\leq n$, $p\leq q$.
\item[(3)] $r_1+1\leq p\leq r_1+r_2$, $r_1+n_1+1\leq q\leq n$, $p\leq q-n_1$.
\item[(4)] $r_1+1\leq p\leq m$, $r_1+1\leq q\leq r_1+n_1$.
\item[(5)] $r_1+1\leq p\leq m$, $r_1+n_1+1\leq q\leq r_1+n_1+r_2$, $q-n_1<p$.
\item[(6)] $r_1+r_2+1\leq p\leq m$, $r_1+n_1+r_2+1\leq q\leq n$.
\end{itemize}

In case $(1)$, since $\Psi_Q(B)_{[p,q]}$ has $q$ columns, its rank is at most $q=\text{rk}\left(\omega_{[p,q]}\right)$. Similarly, in cases $(2)$ and $(3)$, the matrix $\Psi_Q(B)_{[p,q]}$ has $p$ rows so that its rank is at most $p$. In these cases, we have $p=\text{rk}\left(\omega_{[p,q]}\right)$, and the inequality (\ref{ineq2}) follows.

For case $(4)$, we have
\begin{align*}
\text{rk}\left(\Psi_Q(B)_{[p,q]}\right)\leq \text{rk}\left(\Psi_Q(B)_{[p,r_1+n_1]}\right)\leq \text{rk}\left(B_{[p,r_1+n_1]}\right)
\end{align*}
as in cases $(4)$ and $(5)$ in the proof of Lemma \ref{lem55}. Since $B\in X_\omega$, it follows that $\text{rk}\left(B_{[p,r_1+n_1]}\right)=\text{rk}\left(\omega_{[p,r_1+n_1]}\right)$. Moreover, $\text{rk}\left(\omega_{[p,r_1+n_1]}\right)=\text{rk}\left(\omega_{[p,q]}\right)$ holds, yielding the inequality (\ref{ineq2}).

In case $(5)$, we observe that
\begin{align*}
\text{rk}\left(\Psi_Q(B)_{[p,q]}\right)\leq\text{rk}\left(\Psi_Q(B)_{[p,r_1+n_1]}\right)+(q-r_1-n_1).
\end{align*}
By case $(4)$, the first term is bounded by $\text{rk}\left(\omega_{[p,r_1+n_1]}\right)=r_1$. A straightforward computation gives $r_1+(q-r_1-n_1)=\text{rk}\left(\omega_{[p,q]}\right)$. Hence, $\text{rk}\left(\Psi_Q(B)_{[p,q]}\right)\leq \text{rk}\left(\omega_{[p,q]}\right)$.

Finally, in case $(6)$, by applying an argument similar to that of case $(4)$, we have 
\begin{align*}
\text{rk}\left(\Psi_Q(B)_{[p,q]}\right)\leq \text{rk}\left(\Psi_Q(B)_{[p,n]}\right)\leq \text{rk}\left(B_{[p,n]}\right)=\text{rk}\left(\omega_{[p,n]}\right)=\text{rk}\left(\omega_{[p,q]}\right).
\end{align*}

Thus, the inequality (\ref{ineq2}) holds in all cases, and therefore $\Psi_Q(B)\in \overline{X}_\omega$. Since this holds for every $B\in X_\omega$, we obtain $\Psi_Q\left(X_\omega\right)\subset \overline{X}_\omega$. By continuity, it follows that $\Psi_Q\left(\overline{X}_\omega\right)=\overline{X}_\omega$, which completes the proof.
\end{proof}

\subsection{Action of the isometries on normal vectors}
We continue to consider the partial permutation $\omega$ given in (\ref{omegaaaa}). We modify the argument in \cite{BCH} to analyze the action of the isometries introduced in the previous subsection on normal vectors.

Let $Q\in X_\omega$. For $A\in\mathfrak{M}_{m,n}(\mathbb{R})$, we write
\begin{align*}
A=\left(A_1\ \Big| \ \cdots\ \Big| \ A_n\right),
\end{align*}
where each $A_j$ is a column vector. Define $t_c(A)\in\mathfrak{M}_{m,n}(\mathbb{R})$ by
\begin{align*}
t_c(A)=\left(O_{m\times (r_1+n_1)}\ \Big| \ \text{pr}_{C(Q)}\left(A_{r_1+n_1+1}\right)\ \Big| \ \cdots\ \Big| \ \text{pr}_{C(Q)}\left(A_{n}\right)\right),
\end{align*}
where $\text{pr}_{C(Q)}$ denotes the orthogonal projection onto the column space $C(Q)$ of $Q$, and $O_{m\times (r_1+n_1)}$ denotes the $m\times (r_1+n_1)$ zero matrix.
\begin{lemma}\label{lem58}
The vector $t_c(A)$ is tangent to $X_\omega$ at $Q$.
\end{lemma}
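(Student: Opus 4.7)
The plan is to invoke the normal-frame criterion of Proposition~\ref{Prop26}: $t_c(A)$ is tangent to $X_\omega$ at $Q$ if and only if $\langle \overline{\nabla}f_{(\alpha,\beta)}(Q),\, t_c(A)\rangle = 0$ for every $(\alpha,\beta)\in\mathcal{D}(\omega)$. I would rewrite this pairing as the directional derivative $\frac{\mathrm{d}}{\mathrm{d} t}\big|_{t=0} f_{(\alpha,\beta)}(Q+t\, t_c(A))$ and verify its vanishing by splitting on the two connected components of the Rothe diagram.

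First, I would describe $\mathcal{D}(\omega)$ and the associated sets $\mathcal{C}(\alpha,\beta)$ explicitly. A direct inspection of the positions of the $1$'s of $\omega$ yields $\mathcal{D}(\omega)=\mathcal{D}_1\sqcup\mathcal{D}_2$, where
\[
\mathcal{D}_1=\{r_1+1,\dots,m\}\times\{r_1+1,\dots,r_1+n_1\},\qquad \mathcal{D}_2=\{r_1+r_2+1,\dots,m\}\times\{r_1+n_1+r_2+1,\dots,n\},
\]
together with $\mathcal{C}(\alpha,\beta)=\{1,\dots,r_1\}$ for $(\alpha,\beta)\in\mathcal{D}_1$ and $\mathcal{C}(\alpha,\beta)=\{1,\dots,r_1,\,r_1+n_1+1,\dots,r_1+n_1+r_2\}$ for $(\alpha,\beta)\in\mathcal{D}_2$.

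For $(\alpha,\beta)\in\mathcal{D}_1$, the inclusion $\mathcal{C}(\alpha,\beta)\cup\{\beta\}\subseteq\{1,\dots,r_1+n_1\}$ shows that $f_{(\alpha,\beta)}$ involves only entries in the first $r_1+n_1$ columns, and by construction $t_c(A)$ vanishes on those columns; hence $f_{(\alpha,\beta)}(Q+t\, t_c(A))$ is constant in $t$. For $(\alpha,\beta)\in\mathcal{D}_2$ the argument is geometric: the $m\times(r_1+r_2+1)$ submatrix $(Q+t\, t_c(A))\big|_{[1,m],\,\mathcal{C}(\alpha,\beta)\cup\{\beta\}}$ has each of its columns lying in $C(Q)$. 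Indeed, the columns indexed by $\{1,\dots,r_1\}$ are unchanged (they are $Q_c$, since $c\le r_1+n_1$), while those indexed by $\{r_1+n_1+1,\dots,r_1+n_1+r_2,\beta\}$ have the form $Q_c+t\,\text{pr}_{C(Q)}(A_c)$, a sum of two vectors in $C(Q)$. Because $\dim C(Q) = \text{rk}(Q) = r_1+r_2 < r_1+r_2+1$, these columns are linearly dependent for every $t$, so the determinant $f_{(\alpha,\beta)}(Q+t\, t_c(A))$ vanishes identically in $t$ and its derivative at $t=0$ is zero.

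Combining the two cases, $t_c(A)$ is orthogonal to every element of the normal frame $\{\overline{\nabla}f_{(\alpha,\beta)}(Q)\}_{(\alpha,\beta)\in\mathcal{D}(\omega)}$, and is therefore tangent to $X_\omega$ at $Q$. The only real obstacle is the combinatorial bookkeeping required to pin down $\mathcal{D}(\omega)$ and the sets $\mathcal{C}(\alpha,\beta)$; once that is in hand, both cases reduce to short observations about where $t_c(A)$ is supported and to the rank identity $\dim C(Q)=r_1+r_2$.
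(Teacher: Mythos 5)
Your proof is correct, but it takes a genuinely different route from the paper's. The paper proves the stronger statement that the entire line $Q+s\,t_c(A)$, $s\in\mathbb{R}$, lies in $\overline{X}_\omega$, by checking the defining rank inequalities $\mathrm{rk}\bigl((Q+s\,t_c(A))_{[p,q]}\bigr)\leq \mathrm{rk}\bigl(\omega_{[p,q]}\bigr)$ through a five-case analysis in $(p,q)$; tangency then follows since $X_\omega$ is open in $\overline{X}_\omega$. You instead invoke the normal frame of Proposition~\ref{Prop26} and show that the directional derivative of each $f_{(\alpha,\beta)}$ along $t_c(A)$ vanishes at $Q$. Your identification of $\mathcal{D}(\omega)$ and of the sets $\mathcal{C}(\alpha,\beta)$ for the two components is correct, as are both case analyses; in the second case you should add the (immediate) remark that a linear dependence among the full $m$-row columns of $(Q+t\,t_c(A))\big|_{[1,m],\,\mathcal{C}(\alpha,\beta)\cup\{\beta\}}$ restricts to a dependence among the rows in $\mathcal{R}(\alpha,\beta)\cup\{\alpha\}$, so that the $(r_1+r_2+1)\times(r_1+r_2+1)$ minor defining $f_{(\alpha,\beta)}$ is identically zero in $t$. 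As for what each approach buys: yours is shorter and replaces the rank bookkeeping with a one-line dimension count $\dim C(Q)=r_1+r_2$, at the price of having to pin down the Rothe diagram explicitly; the paper's argument never needs the explicit diagram and yields the stronger fact that $\overline{X}_\omega$ is ruled by these lines, in the spirit of the helicoidal symmetries of \cite{BCH}, although only first-order tangency is actually used afterwards (in Lemma~\ref{lem59}).
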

\begin{proof}
We prove the lemma by showing that $Q+s\cdot t_c(A)\in\overline{X}_\omega$ for every $s\in\mathbb{R}$. This implies that $Q+s\cdot t_c(A)\in X_\omega$ for all sufficiently small $s$, and hence $t_c(A)$ is tangent to $X_\omega$ at $Q$.

We check whether the following inequality holds:
\begin{align}\label{ineq3}
\text{rk}\left(\left(Q+s\cdot t_c(A)\right)_{[p,q]}\right)\leq \text{rk}\left(\omega_{[p,q]}\right)
\end{align}
for all $1\leq p\leq m$ and $1\leq q\leq n$. We divide all possible pairs of $p$ and $q$ into the following cases:
\begin{itemize}
\item[(1)] $1\leq p\leq r_1$, $r_1+n_1+1\leq q\leq n$.
\item[(2)] $r_1+1\leq p\leq r_1+r_2$, $r_1+n_1+1\leq q\leq n$, $p\leq q-n_1$.
\item[(3)] $1\leq p\leq m$, $1\leq q\leq r_1+n_1$.
\item[(4)] $r_1+1\leq p\leq m$, $r_1+n_1+1\leq q\leq r_1+n_1+r_2$, $q-n_1<p$.
\item[(5)] $r_1+r_2+1\leq p\leq m$, $r_1+n_1+r_2+1\leq q\leq n$.
\end{itemize}

In cases $(1)$ and $(2)$, the matrix $\left(Q+s\cdot t_c(A)\right)_{[p,q]}$ has rank at most $p$ since it has $p$ rows. As $\text{rk}\left(\omega_{[p,q]}\right)=p$ in these cases, the inequality (\ref{ineq3}) follows.

For case $(3)$, we have $t_c(A)_{[p,q]}=O$. Hence,
\begin{align*}
\text{rk}\left(\left(Q+s\cdot t_c(A)\right)_{[p,q]}\right)=\text{rk}\left(Q_{[p,q]}\right)=\text{rk}\left(\omega_{[p,q]}\right).
\end{align*}

In case $(4)$, we observe that
\begin{align*}
\text{rk}\left(\left(Q+s\cdot t_c(A)\right)_{[p,q]}\right)&\leq \text{rk}\left(\left(Q+s\cdot t_c(A)\right)_{[p,r_1+n_1]}\right)+(q-r_1-n_1)\\
&=\text{rk}\left(\omega_{[p,r_1+n_1]}\right)+(q-r_1-n_1),
\end{align*}
where the equality follows from case $(3)$. A direct computation gives 
\begin{align*}
\text{rk}\left(\omega_{[p,r_1+n_1]}\right)+(q-r_1-n_1)=q-n_1=\text{rk}\left(\omega_{[p,q]}\right),
\end{align*}
which confirms the inequality (\ref{ineq3}).

Finally, in case $(5)$, we have
\begin{align*}
\text{rk}\left(\left(Q+s\cdot t_c(A)\right)_{[p,q]}\right)\leq \text{rk}\left(\left(Q+s\cdot t_c(A)\right)_{[m,q]}\right).
\end{align*}
Since all columns of $\left(Q+s\cdot t_c(A)\right)_{[m,q]}$ lie in the column space $C(Q)$, it follows that
\begin{align*}
\text{rk}\left(\left(Q+s\cdot t_c(A)\right)_{[m,q]}\right)\leq \text{rk}(Q).
\end{align*}
Given that $Q\in X_\omega$, we have
\begin{align*}
\text{rk}(Q)=\text{rk}\left(Q_{[m,n]}\right)=\text{rk}\left(\omega_{[m,n]}\right)=r_1+r_2.
\end{align*}
As $\text{rk}\left(\omega_{[p,q]}\right)=r_1+r_2$ in this case, the inequality (\ref{ineq3}) holds.

Thus, (\ref{ineq3}) holds in all cases, and therefore $Q+s\cdot t_c(A)\in\overline{X}_\omega$ for every $s\in\mathbb{R}$. This completes the proof.
\end{proof}
\begin{lemma}\label{lem59}
Let $N\in\mathfrak{M}_{m,n}(\mathbb{R})$ be a normal vector to $X_\omega$ at $Q$, written as 
\begin{align*}
N=\left(N_1\ \Big| \ \cdots\ \Big| \ N_n\right).
\end{align*}
Then the isometry $\Phi_Q$ reverses the sign of $N_{r_1+n_1+1},\cdots, N_n$. That is,
\begin{align*}
\left(\Phi_Q\right)_{*}N=\left(U_Q\cdot N_1\ \Big| \ \cdots\ \Big| \ U_Q\cdot N_{r_1+n_1}\ \Big| \ -N_{r_1+n_1+1}\ \Big| \ -N_n\right).
\end{align*}
\end{lemma}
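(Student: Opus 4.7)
The plan is to reduce the claim to a projection statement about the columns $N_j$ for $j>r_1+n_1$, and then extract that projection statement from Lemma \ref{lem58} via the tangent-normal pairing.

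First I would observe that $\Phi_Q$ is a linear map on $\mathfrak{M}_{m,n}(\mathbb{R})$, so its differential at any point equals $\Phi_Q$ itself. Hence
\begin{align*}
(\Phi_Q)_*N \;=\; \Phi_Q(N) \;=\; \bigl(U_Q\cdot N_1\ \big|\ \cdots\ \big|\ U_Q\cdot N_n\bigr).
\end{align*}
Thus the only content to prove is that $U_Q\cdot N_j = -N_j$ for every $j\in\{r_1+n_1+1,\dots,n\}$. Since $U_Q$ acts as $-\mathrm{id}$ on $C(Q)^\perp$ and as $+\mathrm{id}$ on $C(Q)$, this is equivalent to showing that $N_j\in C(Q)^\perp$ for such $j$, i.e.\ $\mathrm{pr}_{C(Q)}(N_j)=0$.

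To get this projection vanishing, I would pair $N$ with the tangent vectors produced in Lemma \ref{lem58}. Given any $A\in\mathfrak{M}_{m,n}(\mathbb{R})$, the vector $t_c(A)$ is tangent to $X_\omega$ at $Q$, so $\langle N, t_c(A)\rangle=0$. Unwinding the column-wise inner product gives
\begin{align*}
0 \;=\; \langle N, t_c(A)\rangle \;=\; \sum_{j=r_1+n_1+1}^{n}\bigl\langle N_j,\ \mathrm{pr}_{C(Q)}(A_j)\bigr\rangle \;=\; \sum_{j=r_1+n_1+1}^{n}\bigl\langle \mathrm{pr}_{C(Q)}(N_j),\ A_j\bigr\rangle,
\end{align*}
where the last equality uses that $\mathrm{pr}_{C(Q)}$ is self-adjoint. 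Choosing $A$ with $A_i=0$ for $i\le r_1+n_1$ and $A_j$ arbitrary for $j>r_1+n_1$ lets me isolate each index $j$ individually, which forces $\mathrm{pr}_{C(Q)}(N_j)=0$ for every $j>r_1+n_1$, as required.

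There is no real obstacle here: once Lemma \ref{lem58} is available, the argument is just linearity of $\Phi_Q$ plus bilinearity of the Frobenius inner product. The only mild point to be careful about is that the tangent vectors $t_c(A)$ span (column-wise) the full subspace $\{0\}\oplus C(Q)^{\oplus(r_2+n_2)}$ of $\mathfrak{M}_{m,n}(\mathbb{R})$, which is precisely what is needed to deduce $\mathrm{pr}_{C(Q)}(N_j)=0$ for each $j>r_1+n_1$ separately rather than only in aggregate.
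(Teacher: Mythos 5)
Your proposal is correct and follows essentially the same route as the paper: both reduce the claim to showing $\mathrm{pr}_{C(Q)}(N_j)=0$ for $j>r_1+n_1$ and obtain this from orthogonality of $N$ to the tangent vectors supplied by Lemma \ref{lem58}. The only cosmetic difference is that the paper simply takes $A=N$, so that $0=\langle N,t_c(N)\rangle=\sum_{j>r_1+n_1}\lvert\mathrm{pr}_{C(Q)}(N_j)\rvert^2$ yields the conclusion at once, whereas you vary $A$ and use self-adjointness of the projection.
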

\begin{proof}
Since $t_c(N)$ is a tangent vector by Lemma \ref{lem58}, we have
\begin{align*}
0=\left\langle N, t_c(N)\right\rangle=\sum_{j=r_1+n_1+1}^{n}\left|\text{pr}_{C(Q)}\left(N_j\right)\right|^2,
\end{align*}
which implies that $N_j\in C(Q)^{\perp}$ for all $r_1+n_1+1\leq j\leq n$. It then follows from the definition of $U_Q$ that
\begin{align*}
U_Q\cdot N_j=-N_j
\end{align*}
for all $r_1+n_1+1\leq j\leq n$, which completes the proof.
\end{proof}
In a similar manner, we introduce another tangent vector associated with the row spaces. Consider a matrix $B\in\mathfrak{M}_{m,n}(\mathbb{R})$ of the form
\begin{align}\label{Bform}
B=
\begin{pmatrix}
B_1 & O\\
\vdots & \vdots\\
B_m & O
\end{pmatrix},
\end{align}
where each $B_j$ is a row vector in $\mathbb{R}^{r_1+n_1}$, and $O$ denotes the zero vector in $\mathbb{R}^{r_2+n_2}$. Since $Q\in X_\omega$, the row space $R\left(Q_{[r_1,r_1+n_1]}\right)$ of $Q_{[r_1,r_1+n_1]}$ and the row space $R\left(Q_{[r_1,n]}\right)$ of $Q_{[r_1,n]}$ are of full rank. Using the rows of $Q_{[r_1,r_1+n_1]}$ and $Q_{[r_1,n]}$ as bases for their respective row spaces, we can associate to each $v\in R\left(Q_{[m,r_1+n_1]}\right)=R\left(Q_{[r_1,r_1+n_1]}\right)$ a unique row vector $f(v)\in\mathbb{R}^{r_2+n_2}$ such that $\left(v\ f(v)\right)\in R\left(Q_{[r_1,n]}\right)$. 

Define $t_r(B)\in\mathfrak{M}_{m,n}(\mathbb{R})$ by
\begin{align*}
t_r(B)=
\begin{pmatrix}
\text{pr}_{R\left(Q_{[m,r_1+n_1]}\right)}(B_1) & f\left(\text{pr}_{R\left(Q_{[m,r_1+n_1]}\right)}(B_1)\right)\\
\vdots & \vdots\\
\text{pr}_{R\left(Q_{[m,r_1+n_1]}\right)}(B_n) & f\left(\text{pr}_{R\left(Q_{[m,r_1+n_1]}\right)}(B_n)\right)
\end{pmatrix},
\end{align*}
where $\text{pr}_{R\left(Q_{[m,r_1+n_1]}\right)}$ is the orthogonal projection onto the row space $R\left(Q_{[m,r_1+n_1]}\right)$.
\begin{lemma}\label{lem510}
The vector $t_r(B)$ is tangent to $X_\omega$ at $Q$.
\end{lemma}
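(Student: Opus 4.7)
The plan is to follow the template set by Lemma~\ref{lem58}: I will show that $Q + s\cdot t_r(B) \in \overline{X}_{\omega}$ for every $s \in \mathbb{R}$, which forces the curve to lie in $X_{\omega}$ for all sufficiently small $s$ and hence establishes that $t_r(B)$ is tangent to $X_{\omega}$ at $Q$. Accordingly, it suffices to verify the rank inequality
\begin{align*}
\textup{rk}\bigl((Q + s\cdot t_r(B))_{[p,q]}\bigr) \leq \textup{rk}(\omega_{[p,q]})
\end{align*}
for all $1 \leq p \leq m$ and $1 \leq q \leq n$.

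The structural observation driving the argument is that every row of $t_r(B)$, regarded as a vector in $\mathbb{R}^n$, lies in the $r_1$-dimensional row space $R(Q_{[r_1,n]})$. Indeed, by construction each such row has the form $(w, f(w))$ with $w \in R(Q_{[m, r_1+n_1]}) = R(Q_{[r_1, r_1+n_1]})$, and the very definition of $f$ places $(w, f(w))$ in $R(Q_{[r_1,n]})$. Truncating to the first $q$ columns then shows that every row of $(t_r(B))_{[m,q]}$ lies in $R(Q_{[r_1,q]})$, because restricting a linear combination of the rows of $Q_{[r_1,n]}$ yields the same linear combination of the rows of $Q_{[r_1,q]}$.

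I would then split on $p$. When $p \leq r_1$, the matrix $(Q + s\cdot t_r(B))_{[p,q]}$ has only $p$ rows and $q$ columns, so its rank is at most $\min(p,q)$, and inspecting $\omega$ yields $\textup{rk}(\omega_{[p,q]}) = \min(p,q)$ in this range. When $p \geq r_1$, the first $r_1$ rows of $Q_{[p,q]}$ coincide with $Q_{[r_1,q]}$, whence $R(Q_{[r_1,q]}) \subset R(Q_{[p,q]})$. Combined with the structural observation, every row of $(s\cdot t_r(B))_{[p,q]}$ lies in $R(Q_{[p,q]})$, so $R((Q + s\cdot t_r(B))_{[p,q]}) \subset R(Q_{[p,q]})$; since $Q \in X_{\omega}$, this gives
\begin{align*}
\textup{rk}\bigl((Q + s\cdot t_r(B))_{[p,q]}\bigr) \leq \textup{rk}(Q_{[p,q]}) = \textup{rk}(\omega_{[p,q]}),
\end{align*}
completing the case analysis.

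The main point that needs care is the well-definedness and linearity of $f$, which is what guarantees that the rows of $t_r(B)$ genuinely lie in $R(Q_{[r_1,n]})$. This is ensured by the fact that for $Q \in X_{\omega}$ both $R(Q_{[r_1, r_1+n_1]})$ and $R(Q_{[r_1,n]})$ are $r_1$-dimensional, so the restriction to the first $r_1 + n_1$ columns is a linear isomorphism between them whose inverse records the complementary coordinates in columns $r_1+n_1+1,\ldots,n$ and thereby defines $f$. Once this is in place, the subspace-inclusion argument above dispenses with the rank verification in only the two cases above, noticeably shorter than the multi-case analyses in Lemmas~\ref{lem57} and~\ref{lem58}, so I do not anticipate any further obstacle.
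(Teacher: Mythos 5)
Your proof is correct, and it follows the same overall template as the paper's (show $Q+s\cdot t_r(B)\in\overline{X}_{\omega}$ for all $s$ by verifying the rank inequalities $\textup{rk}\bigl((Q+s\cdot t_r(B))_{[p,q]}\bigr)\leq\textup{rk}(\omega_{[p,q]})$), but the way you discharge the inequalities is genuinely different and cleaner. The paper runs through six cases: three are handled by counting rows or columns of the submatrix, and only in the remaining three does it invoke a row-space containment, and even then only locally, for the blocks $Q_{[p,r_1+n_1]}$ and $Q_{[p,n]}$, with an extra column-counting correction term in the intermediate range of $q$. Your key observation --- that every full row of $t_r(B)$, as a vector in $\mathbb{R}^n$, already lies in the $r_1$-dimensional space $R(Q_{[r_1,n]})$, and hence every row of any truncation $(t_r(B))_{[p,q]}$ with $p\geq r_1$ lies in $R(Q_{[r_1,q]})\subseteq R(Q_{[p,q]})$ --- subsumes all of those cases at once and collapses the analysis to two cases ($p\leq r_1$ by size, $p\geq r_1$ by row-space containment). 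This is a strict sharpening of what the paper uses, it makes transparent \emph{why} $t_r(B)$ is tangent (the perturbation never leaves the span of the first $r_1$ rows of $Q$), and your justification of the well-definedness and linearity of $f$ via the dimension count $\textup{rk}(Q_{[r_1,r_1+n_1]})=\textup{rk}(Q_{[r_1,n]})=r_1$ is exactly the point the paper relies on implicitly. I see no gap.
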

\begin{proof}
As in the proof of Lemma \ref{lem58}, it suffices to prove that for every $s\in\mathbb{R}$,
\begin{align}\label{ineq4}
\text{rk}\left(\left(Q+s\cdot t_r(B)\right)_{[p,q]}\right)\leq \text{rk}\left(\omega_{[p,q]}\right)
\end{align}
for all $1\leq p\leq m$ and $1\leq q\leq n$. All possible pairs of $p$ and $q$ can be divided into the following cases:
\begin{itemize}
\item[(1)] $1\leq p\leq r_1$, $1\leq q\leq n$, $p\leq q$.
\item[(2)] $r_1+1\leq p\leq r_1+r_2$, $r_1+n_1+1\leq q\leq n$, $p\leq q-n_1$.
\item[(3)] $1\leq p\leq m$, $1\leq q\leq r_1$, $q<p$.
\item[(4)] $r_1+1\leq p\leq m$, $r_1+1\leq q\leq r_1+n_1$.
\item[(5)] $r_1+1\leq p\leq m$, $r_1+n_1+1\leq q\leq r_1+n_1+r_2$, $q-n_1<p$.
\item[(6)] $r_1+r_2+1\leq p\leq m$, $r_1+n_1+r_2+1\leq q\leq n$.
\end{itemize}

In cases $(1)$ and $(2)$, the matrix $\left(Q+s\cdot t_r(B)\right)_{[p,q]}$ has $p$ rows so that its rank is bounded above by $p=\text{rk}\left(\omega_{[p,q]}\right)$. Likewise, in case $(3)$, the matrix $\left(Q+s\cdot t_r(B)\right)_{[p,q]}$ has $q$ columns, which implies that its rank is at most $q=\text{rk}\left(\omega_{[p,q]}\right)$.

In case $(4)$, we have
\begin{align*}
\text{rk}\left(\left(Q+s\cdot t_r(B)\right)_{[p,q]}\right)\leq\text{rk}\left(\left(Q+s\cdot t_r(B)\right)_{[p,r_1+n_1]}\right). 
\end{align*}
Since $Q\in X_\omega$, the two row spaces $R\left(Q_{[p,r_1+n_1]}\right)$ and $R\left(Q_{[m,r_1+n_1]}\right)$ coincide. Consequently, all rows of $\left(Q+s\cdot t_r(B)\right)_{[p,r_1+n_1]}$ lie in $R\left(Q_{[m,r_1+n_1]}\right)$, and hence
\begin{align*}
\text{rk}\left(\left(Q+s\cdot t_r(B)\right)_{[p,r_1+n_1]}\right)\leq \text{rk}\left(Q_{[m,r_1+n_1]}\right)=r_1=\text{rk}\left(\omega_{[p,q]}\right).
\end{align*}
This yields the inequality (\ref{ineq4}).

In case $(5)$, we obtain
\begin{align*}
\text{rk}\left(\left(Q+s\cdot t_r(B)\right)_{[p,q]}\right)&\leq\text{rk}\left(\left(Q+s\cdot t_r(B)\right)_{[p,r_1+n_1]}\right)+(q-r_1-n_1)\\
&\leq q-n_1=\text{rk}\left(\omega_{[p,q]}\right),
\end{align*}
where the second inequality follows from case $(4)$.

Finally, in case $(6)$, an argument similar to that of case $(4)$ yields (\ref{ineq4}).

Therefore, the inequality (\ref{ineq4}) holds in all cases, completing the proof.
\end{proof}
\begin{lemma}\label{lem511}
Suppose that a normal vector $N$ to $X_\omega$ at $Q$ has the form given in (\ref{Bform}). Then the isometry $\Psi_Q$ satisfies
\begin{align*}
\left(\Psi_Q\right)_{*}N=-N.
\end{align*}
\end{lemma}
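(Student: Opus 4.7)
The plan is to mirror the argument used in the proof of Lemma \ref{lem59}, with the row-based tangent field $t_r$ from Lemma \ref{lem510} playing the role that the column-based $t_c$ plays there. The key observation is that the hypothesis places $N$ itself in the class (\ref{Bform}), so we may legitimately take $B := N$ in the construction of $t_r$ and thereby produce a tangent vector $t_r(N)$ to $X_\omega$ at $Q$ that is built from $N$.

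The central step is to compute the inner product $\langle N, t_r(N)\rangle$, which must vanish because $N$ is normal while $t_r(N)$ is tangent. Writing each row of $N$ as $(N_j,\,0)$ with $N_j \in \mathbb{R}^{r_1+n_1}$ and $0 \in \mathbb{R}^{r_2+n_2}$, the contribution coming from the $f$-block of $t_r(N)$ is killed, since the trailing $r_2+n_2$ entries of every row of $N$ vanish by assumption. What remains is
\[
0 \;=\; \langle N, t_r(N)\rangle \;=\; \sum_{j} \bigl|\mathrm{pr}_{R(Q_{[m,r_1+n_1]})}(N_j)\bigr|^{2},
\]
and positivity then forces every row $N_j$ to lie in $R(Q_{[m,r_1+n_1]})^{\perp}$.

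With this orthogonality in hand, the conclusion is immediate from the definitions of $V_Q$ and $\widehat{V}_Q$: the map $V_Q$ acts as $-\mathrm{id}$ on $R(Q_{[m,r_1+n_1]})^{\perp}$, while $\widehat{V}_Q$ acts as the identity on the last $r_2+n_2$ coordinates, which are already zero in each row of $N$. Thus $\widehat{V}_Q\cdot(N_j,0) = (-N_j,0) = -(N_j,0)$ for every $j$, giving $\Psi_Q(N) = -N$; since $\Psi_Q$ is linear, its differential coincides with itself and we obtain $(\Psi_Q)_{*}N = -N$. I do not foresee any substantive obstacle, as the argument is a direct row-analog of Lemma \ref{lem59}; the only delicate point is to notice that the block-zero structure (\ref{Bform}) is precisely what suppresses the $f$-contribution in the inner product, so that orthogonality of $N_j$ to $R(Q_{[m,r_1+n_1]})$ can actually be extracted.
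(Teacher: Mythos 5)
Your argument is correct and coincides with the paper's own proof: both take $B=N$ in the construction of $t_r$, use $\langle N,t_r(N)\rangle=0$ to conclude $N_j\in R\left(Q_{[m,r_1+n_1]}\right)^{\perp}$, and then apply the definition of $V_Q$ (your explicit remark that the $f$-block contributes nothing to the inner product because of the zero block in (\ref{Bform}) is exactly the point the paper leaves implicit). No gaps.
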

\begin{proof}
We write 
\begin{align*}
N=
\begin{pmatrix}
N_1 & O\\
\vdots & \vdots\\
N_m & O
\end{pmatrix}
\end{align*}
as in (\ref{Bform}). Since $t_r(N)$ is tangent to $X_\omega$ at $Q$ by Lemma \ref{lem510}, we obtain
\begin{align*}
0=\left\langle N, t_r(N)\right\rangle=\sum_{j=1}^{m}\left|\text{pr}_{R\left(Q_{[m,r_1+n_1]}\right)}\left(N_j\right)\right|^2.
\end{align*}
This implies that $N_j\in R\left(Q_{[m,r_1+n_1]}\right)^{\perp}$ for all $1\leq j\leq m$. It then follows from the definition of $V_Q$ that
\begin{align*}
\left(\Psi_Q\right)_{*}N=
\begin{pmatrix}
V_Q\cdot N_1 & O\\
\vdots & \vdots\\
V_Q\cdot N_m & O
\end{pmatrix}=
\begin{pmatrix}
-N_1 & O\\
\vdots & \vdots\\
-N_m & O
\end{pmatrix}=-N.
\end{align*}
Therefore the lemma is proved.
\end{proof}

\subsection{Proof of the theorem}
We now state the proof of Theorem \ref{thm52}.
\begin{proof}[Proof of Theorem \ref{thm52}]
As observed in Remark \ref{rmk53}, if two partial permutations have the same Rothe diagram, then their corresponding real matrix Schubert varieties differ only by a Euclidean factor. Hence, the minimality of one implies that of the other, and vice versa. Therefore, it suffices to prove the theorem for the class $\mathfrak{Gr}_2$.

Let $\omega\in\mathfrak{M}_{m,n}(\mathbb{R})$ be a vexillary partial permutation in $\mathfrak{Gr}_2$. The minimality of determinantal varieties, which correspond to the first form in (\ref{IOIOIO}) (see Example \ref{ex27}), was established in \cite{BCH}. By taking transposes, the second and third forms in (\ref{IOIOIO}) are equivalent. Thus, we may assume that $\omega$ is given as in (\ref{omegaaaa}).

Let $Q\in X_\omega$, and let $H(Q)\in\mathfrak{M}_{m,n}(\mathbb{R})$ denote the mean curvature vector of $X_\omega$ at $Q$, written as
\begin{align*}
H(Q)=\left(H(Q)_1\ \Big| \ \cdots\ \Big| \ H(Q)_n\right).
\end{align*}

Consider the isometry $\Phi_Q$ defined in Section \ref{subsec51}. By Lemma \ref{lem55}, we have
\begin{align*}
\left(\Phi_Q\right)_{*}H(Q)=H(Q).
\end{align*}
On the other hand, applying Lemma \ref{lem59} to $H(Q)$ gives
\begin{align*}
\left(\Phi_Q\right)_{*}H(Q)=\left(U_Q\cdot H(Q)_1\ \Big| \ \cdots\ \Big| \ U_Q\cdot H(Q)_{r_1+n_1}\ \Big| \ -H(Q)_{r_1+n_1+1}\ \Big| \ -H(Q)_n\right).
\end{align*}
Comparing these two equalities, we deduce that $H(Q)_j=0$ for all $r_1+n_1+1\leq j\leq n$. This implies that $H(Q)$ is of the form (\ref{Bform}).

Next, consider the isometry $\Psi_Q$ defined in Section \ref{subsec51}. By Lemma \ref{lem57}, we have
\begin{align*}
\left(\Psi_Q\right)_{*}H(Q)=H(Q).
\end{align*}
It follows from Lemma \ref{lem511} that 
\begin{align*}
\left(\Psi_Q\right)_{*}H(Q)=-H(Q).
\end{align*}
Therefore, $H(Q)$ must vanish. Since this holds for every $Q\in X_\omega$, we conclude that $X_{\omega}\subset\mathfrak{M}_{m,n}(\mathbb{R})$ is minimal.
\end{proof}


\end{document}